\DeclareMathAlphabet{\mathpzc}{OT1}{pzc}{m}{it}
\newcommand{\Z}{\mathbb{Z}}
\newcommand{\OO}{{\mathcal O}}
\newcommand{\CC}{{\mathcal C}}
\newcommand{\DD}{{\mathcal D}}
\newcommand{\calA}{\mathcal{A}}
\newcommand{\uC}{\underline{\mathscr{C}}}
\newcommand{\uD}{\underline{\mathscr{D}}}
\newcommand{\VV}{\mathscr{V}}
\newcommand{\umu}{\underline{\mu}}
\newcommand{\umui}{\underline{\mu}^{-1}}
\newcommand{\s}{\mathfrak{s}}
\newcommand{\col}{\mathrm{Col}}
\newcommand{\tc}{\mathrm{tC}_R}
\newcommand{\fmod}{\mathrm{fMod}_R}
\newcommand{\fc}{\mathrm{fC}_R}
\newcommand{\vbOp}{\mathrm{vbOp}}
\newcommand{\fCOp}{\mathrm{fCOp}}
\newcommand{\bgmod}{\mathrm{bgMod}_R}
\newcommand{\vbc}{\mathrm{vbC}_R}
\newcommand{\ubgMod}{\underline{\mathpzc{bgMod}_R}}
\newcommand{\ufMod}{\underline{\mathpzc{fMod}_R}}
\newcommand{\uEnd}{\underline{\mathpzc{End}}}
\newcommand{\utC}{\underline{t\mathcal{C}_R}}
\newcommand{\ufC}{\underline{\mathpzc{fC}_R}}
\newcommand{\usfC}{\underline{\mathpzc{sfC}_R}}
\newcommand{\Tot}{\mathrm{Tot}}
\newcommand{\vdeg}{\mathrm{vdeg}}
\DeclareMathOperator{\Ima}{Im}
\DeclareMathOperator{\Hom}{Hom}
\DeclareMathOperator{\End}{End}
\newenvironment{manualtheorem}[1]{%
  \manualtheoreminner
}{\endmanualtheoreminner}
\newtheorem{defin}{Definition}[section]
\newtheorem{lem}[defin]{Lemma}
\newtheorem{propo}[defin]{Proposition}
\newtheorem{thm}[defin]{Theorem}
\newtheorem{corollary}[defin]{Corollary}
\theoremstyle{remark}
\newtheorem{remark}[defin]{Remark}
\DeclareMathSymbol{\mathinvertedexclamationmark}{\mathclose}{operators}{'074}
\DeclareMathSymbol{\mathexclamationmark}{\mathclose}{operators}{'041}
\newcommand{\raisedmathinvertedexclamationmark}{%
  \mathclose{\mathpalette\raised@mathinvertedexclamationmark\relax}%
}
\newcommand{\raised@mathinvertedexclamationmark}[2]{%
  \raisebox{\depth}{$\m@th#1\mathinvertedexclamationmark$}%
}
\begin{document}

\title{The derived Deligne conjecture}
\author{Javier Aguilar Mart\'in}
\author{Constanze Roitzheim}
\address{University of Kent, School of Mathematics, Statistics and Actuarial Science, Canterbury, CT2 7NF, UK}
\email{c.roitzheim@kent.ac.uk}

\begin{abstract}
Derived $A_\infty$-algebras have a wealth of theoretical advantages over regular $A_\infty$-algebras. However, due to their bigraded nature, in practice they are often unwieldy to work with. We develop a framework involving brace algebras on operads which allows us to study derived $A_\infty$ algebras in a new conceptual context. One particular advantage is that this construction allows us to generalize the Lie algebra structure on the Hochschild complex of an $A_\infty$-algebra, obtaining new and rigorous versions of the Deligne conjecture.
\end{abstract}

\maketitle


\section{Introduction}

There are a number of mathematical fields in which $A_\infty$-structures arise naturally, ranging from topology to mathematical physics. To study these structures, different interpretations of $A_\infty$-algebras can be of use. From the original definition in \cite{STASHEFF} to alternative definitions in terms of tensor coalgebras \cite{keller}, \cite{penkava}, many approaches use the machinery of operads \cite{LRW}, \cite{lodayvallette} or certain Lie brackets \cite{RW} to obtain these objects. 

Another technique to describe $A_\infty$-structures comes from \emph{brace algebras} \cite{GV},\cite{lada}, which often involves big calculations that are difficult to handle.
In this article, we use an operadic approach to deal with the relationship between brace algebras and $A_\infty$-algebras in a conceptual manner, laying down rigorous foundations which allow us to make better use of the potential that brace structures have to offer, as well as to generalise existing approaches such as the Lie bracket methods in \cite{RW}. 

One advantage to our approach is is that it only requires relatively elementary tools and that it can be used to talk about $A_\infty$-structrures on any operad, which provides a useful way of thinking about $A_\infty$-structures.  A first application is the generalization of the \emph{Deligne conjecture}. The {classical Deligne conjecture}  has its roots in the theory of topological operads \cite{delignehistory}. It implies that the Hochschild complex of an associative algebra has the structure of a homotopy $G$-algebra \cite{GV}, which we prove as \Cref{theorem}.

\begin{manualtheorem}{A}
The brace algebra structure on an operad $\OO$ gives rise to a morphism of $A_\infty$-algebras $\Phi:S\s\OO\to S\s\End_{S\s\OO}$.
\end{manualtheorem} 
This result was hinted at by Gerstenhaber and Voronov in \cite{GV}, but here we introduce a suitable context and prove it as \Cref{theorem}.

Since $A_\infty$- algebras generalize associative algebras, it is natural to ask what sort of algebraic structure is carried by their Hochschild complex. Thanks to the tools we develop, we are able to answer this question, obtaining the following $A_\infty$-version of the Deligne conjecture in \Cref{ainftydeligne}. 

\begin{manualtheorem}{B}
The Hochschild complex $S\s\End_{S\s\OO}$ of an operad with an $A_\infty$-multiplication has a structure of a $J$-algebra.
\end{manualtheorem} 
 In the above theorem, $J$-algebras provide the appropriate generalization of homotopy $G$-algebras from the classical case \cite{GV}.

In the 2000s, derived $A_\infty$-algebras were introduced as a bigraded generalization of $A_\infty$-algebras in order to bypass the restrictive projectivity assumptions that are often required when working with classical $A_\infty$-algebras. The key difference is that derived $A_\infty$-algebras carry an additional degree which allows for internal projective resolutions. Of course, carrying another degree comes with multiple practical challenges, creating the need for particularly careful theoretical foundations in order to generalize the Deligne conjecture theorems to the derived context. We start by showing that indeed, under mild boundedness assumptions, any operad $\OO$ with a derived $A_\infty$-multiplication carries linear maps $M_{ij}:(S\s\OO)^{\otimes j}\to S\s\OO$, satisfying the derived $A_\infty$-algebra axioms.

 The next result is \Cref{bigradedtheorem}, which generalizes \Cref{theorem} to the derived setting. 
 \begin{manualtheorem}{C}
The brace algebra structure on a suitable operad $\OO$ gives rise to a morphism of derived $A_\infty$-algebras $\Phi:S\s\OO\to S\s\End_{S\s\OO}$.
\end{manualtheorem}
As a consequence of this theorem we obtain a new version of the Deligne conjecture, \Cref{dainftydeligne}. 
For this we also develop a derived version of $J$-algebras.
\begin{manualtheorem}{D}
The Hochschild complex $S\s\End_{S\s\OO}$ of an operad with a derived $A_\infty$-multiplication has a structure of derived $J$-algebra.
\end{manualtheorem} 

Our results therefore develop new strong links between several key structures in algebraic operads and open up the potential to apply what was previously only possible in a single-graded, projective setting to a wider topological context.

\bigskip

The paper is organized as follows. \Cref{sec:bigraded}, \Cref{sec:total} and \Cref{sec:enrichment} establish necessary notation and conventions with regards to the bigraded and enriched categories we use. It is of crucial importance that we take great care laying down these foundations due to the potentially messy nature of working in a bigraded context.
With all this in place, we finally we move on to a recap of derived $A_\infty$-algebras in \Cref{sec:deriveddef}.
The next important ingredients are a suitable bigraded version of operadic suspension, which we introduce in  \Cref{sec:operadic}, and brace structures, introduced in \Cref{sec:braces}.
We then step back to the single graded case of the classical Deligne conjecture in \Cref{sec:classicaldeligne}. As the derived version depends on rigorous groundwork with regards to the single-graded case, it is important for our purposes to explicitly present this case.
In \Cref{sec:derivedstructure} we then apply all our categorical tools to obtain the derived statements finally proving the derived Deligne conjecture in \Cref{sec:ddeligne}.

\section*{Acknowledgements} This article originated as the PhD thesis of the first author under the supervision of the second author. Both authors thank the University of Kent, particularly for providing the first author with a Graduate Teaching Assistantship. We would furthermore like to express special thanks Sarah Whitehouse for her support, especially during the second author's maternity leave.

\section{Filtered Modules and complexes}\label{sec:bigraded}

In this section we will collect the necessary notions for the base categories that we will be working in. Derived $A_\infty$-categories exist in a bigraded setting, so it is only natural that we will also present conventions on grading and signs.

Let $\CC$ be a category and let $A$, $B$ be 
objects in $\CC$. We denote by $\Hom_\CC(A,B)$ the set of morphisms from $A$ to $B$ in $\CC$. If $(\CC,\otimes, 1)$ is a
closed symmetric monoidal category, then we denote its internal hom-object by $[A,B] \in\CC$.

Let us now collect some definitions with regards to filtered modules and filtered complexes. Filtrations will allow to add an extra degree to single-graded objects which will be a key technique for relating them to bigraded objects. 

\begin{defin}
A \emph{filtered $R$-module} $(A, F)$ is given by a family of $R$-modules $\{F_pA\}_{p\in\Z}$ indexed by
the integers such that $F_{p}A \subseteq F_{p-1}A$ for all $p \in\Z$ and $A = \bigcup_p F_pA$. A \emph{morphism of filtered modules} is a morphism $f : A \to B$ of $R$-modules which is compatible with filtrations: $f(F_pA) \subset F_pB$ for all $p \in\Z$.
\end{defin}
We denote by $\mathrm{C}_R$ the category of cochain complexes of $R$-modules.
\begin{defin}\label{filteredcomplex}
A \emph{filtered complex} $(K, d, F)$ is a complex $(K, d) \in\mathrm{C}_R$ together with a filtration $F$ of each $R$-module $K^n$ such that $d(F_pK^n) \subset F_pK^{n+1}$ for all $p, n \in\Z$. Its morphisms are given by
morphisms of complexes $f : K \longrightarrow L$ compatible with filtrations.
\end{defin}

We denote by $\fmod$ and $\fc$ the categories of filtered modules and filtered complexes of $R$-modules, respectively.

\begin{defin}\label{filteredtensor}
The \emph{tensor product of two filtered $R$-modules} $(A, F)$ and $(B, F)$ is the filtered $R$-module
with
 \[F_p(A \otimes B) :=\sum_{i+j=p}\Ima(F_iA \otimes F_jB \longrightarrow A \otimes B).\]
This makes the category of filtered $R$-modules into a symmetric monoidal category, where the unit is given by $R$ with the trivial filtration $0 = F_{1}R \subset F_0R = R$.
\end{defin}

\begin{defin}\label{filterend}
Let $K$ and $L$ be filtered complexes. We define $\underline{\Hom}(K,L)$ to be the filtered complex whose underlying cochain complex is $\Hom_{\mathrm{C}_R}(K,L)$ and the filtration $F$ given by 
\[F_p\underline{\Hom}(K,L)=\{f:K\to L\mid f(F_qK)\subset F_{q+p}L\text{ for all }q \in\Z\}.\]
In particular, $\Hom_{\fmod}(K,L)=F_0\underline{\Hom}(K,L)$.
\end{defin}

\begin{defin}
We consider $(\Z,\Z)$-bigraded
$R$-modules $A = \{A^j_i\}$, where elements of $A^j_i$ are said to have bidegree $(i, j)$. We sometimes refer to $i$
as the \emph{horizontal} degree and $j$ the \emph{vertical degree}. The \emph{total degree} of an element $x \in A^j_i$ is $i+j$ and is denoted by $|x|$.
\end{defin}

We introduce the following scalar product notation for bidegrees: for $x$, $y$ of bidegree $(x_1, x_2)$, $(y_1, y_2)$
respectively, we let $\langle x, y\rangle = x_1y_1 + x_2y_2$.

\begin{defin}
A \emph{morphism of bidegree $(p, q)$} maps $A^j_i$ to $A^{j+q}_{i+p}$. The tensor product of two bigraded $R$-modules $A$
and $B$ is the bigraded $R$-module $A \otimes B$ given by
\[(A \otimes B)^j_i \coloneqq \bigoplus_{p,q}A^q_p \otimes B^{j-q}_{i-p} .\]
\end{defin}
We denote by $\bgmod$ the category whose objects are bigraded $R$-modules and whose morphisms
are morphisms of bigraded $R$-modules of bidegree $(0, 0)$. It is symmetric monoidal with the above
tensor product.
The symmetry isomorphism is given by
\[\tau_{A\otimes B} : A \otimes B \longrightarrow B \otimes A,\ x \otimes y \mapsto (-1)^{\langle x,y\rangle}y \otimes x.\]
We follow the \emph{Koszul sign rule:} if $f : A \longrightarrow B$ and $g : C \longrightarrow D$ are bigraded morphisms, then the
morphism $f \otimes g : A \otimes C \longrightarrow B \otimes D$ is defined by
\[(f \otimes g)(x \otimes z) \coloneqq (-1)^{\langle g,x\rangle}f(x) \otimes g(z).\]

The following categories will occur naturally throughout our work.
\begin{defin}
A \emph{vertical bicomplex} is a bigraded $R$-module $A$ equipped with a vertical differential \linebreak $d^A : A \longrightarrow A$ of bidegree $(0, 1)$. A \emph{morphism of vertical bicomplexes} is a morphism of bigraded modules
of bidegree $(0, 0)$ commuting with the vertical differential. The category of vertical bicomplexes is denoted by $\vbc$.
\end{defin}

 The tensor product of two vertical bicomplexes $A$ and $B$ is given by endowing the tensor product of underlying bigraded modules with
vertical differential \[d^{A\otimes B} := d^A \otimes 1 + 1 \otimes d^B : (A \otimes B)^v_u \longrightarrow (A \otimes B)^{v+1}_u .\] This makes $\vbc$ into a
symmetric monoidal category.

The symmetric monoidal categories $(\mathrm{C}_R,\otimes,R)$, $(\bgmod,\otimes,R)$ and $(\vbc,\otimes,R)$ are related by embeddings $\mathrm{C}_R\to\vbc$ and $\bgmod \to\vbc$ which are monoidal and full.

\begin{defin}\label{delta1}
Let $A,B$ be bigraded modules. We define $[A,B]^\ast_\ast$
to be the bigraded module of morphisms of bigraded modules $A \longrightarrow B$. Furthermore, if $A,B$ are vertical bicomplexes, and $f \in
[A,B]^v_u$, we define
\[\delta (f) := d_Bf - (-1)^vfd_A.\]
\end{defin}

Direct computation shows the following. 

\begin{lem}
If $A$, $B$ are vertical bicomplexes, then $([A,B]^\ast_\ast
, \delta )$ is a vertical bicomplex. \qed
\end{lem}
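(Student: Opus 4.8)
The plan is to verify the two defining properties of a vertical bicomplex for the pair $([A,B]^*_*,\delta)$: that $\delta$ has bidegree $(0,1)$ and that $\delta^2=0$. First I would check the degree claim. Given $f\in[A,B]^v_u$, the composite $d_Bf$ has bidegree $(u,v+1)$ since $d_B$ has bidegree $(0,1)$, and likewise $fd_A$ has bidegree $(u,v+1)$; hence $\delta(f)=d_Bf-(-1)^vfd_A$ lands in $[A,B]^{v+1}_u$, so $\delta$ indeed raises the vertical degree by one and preserves the horizontal degree, giving it bidegree $(0,1)$ as required. This is the easy part and amounts to confirming that the internal hom bigrading from \Cref{delta1} is compatible with composition on either side with the differentials.

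Next I would compute $\delta^2(f)$ directly from the definition. Writing $g=\delta(f)=d_Bf-(-1)^vfd_A$, which has vertical degree $v+1$, I would apply $\delta$ again to get $\delta(g)=d_Bg-(-1)^{v+1}gd_A$. Expanding this yields four terms:
\[
\delta^2(f)=d_B\bigl(d_Bf-(-1)^vfd_A\bigr)-(-1)^{v+1}\bigl(d_Bf-(-1)^vfd_A\bigr)d_A.
\]
The term $d_Bd_Bf$ vanishes because $d_B^2=0$ (as $B$ is a vertical bicomplex), and the term involving $fd_Ad_A$ vanishes because $d_A^2=0$. The remaining two terms are $-(-1)^vd_Bfd_A$ and $+(-1)^{v+1}d_Bfd_A$, whose signs are opposite, so they cancel. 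Hence $\delta^2(f)=0$ for every $f$, and $\delta$ is a differential.

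The main point requiring care is the bookkeeping of Koszul signs: the factor $(-1)^v$ in the definition of $\delta$ depends on the vertical degree of the argument, so when applying $\delta$ the second time one must use the vertical degree $v+1$ of $g=\delta(f)$ rather than $v$. This is precisely what produces the sign $-(-1)^{v+1}$ in the second application and makes the two surviving cross terms cancel. I do not expect any genuine obstacle here; the computation is routine once the degree of $\delta(f)$ is correctly tracked, and the symmetric monoidal structure on $\vbc$ together with $d_A^2=d_B^2=0$ supplies everything else.
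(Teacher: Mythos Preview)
Your proposal is correct and is exactly the direct computation the paper alludes to with ``Direct computation shows $\delta^2=0$.'' One small slip: from your displayed expansion the second surviving cross term is $-(-1)^{v+1}d_Bfd_A$, not $+(-1)^{v+1}d_Bfd_A$; since $-(-1)^{v+1}=(-1)^v$ this is indeed opposite in sign to $-(-1)^vd_Bfd_A$ and the cancellation goes through as you claim.
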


\begin{defin}\label{twistedcomplex} 
The category of twisted complexes $\tc$ is defined as follows. 
The objects are \emph{twisted complexes}, where a twisted complex $(A, d_m)$ is a bigraded $R$-module $A = \{A^j_i \}$ together with a family
of morphisms $\{d_m : A \rightarrow A\}_{m\geq0}$ of bidegree $(m,1-m )$ such that for all $m \geq 0$,
\[\sum_{i+j=m}(-1)^id_id_j = 0.\]

A \emph{morphism of twisted complexes} $f : (A, d^A_m) \longrightarrow (B, d^B_m)$ is given by a family of morphisms of $R$-modules $\{f_m : A \longrightarrow B\}_{m\geq0}$ of bidegree $(m,-m)$ such that for all $m \geq 0$,
\[\sum_{i+j=m}d^B_if_j =\sum_{i+j=m}(-1)^if_id^A_j.\]
The composition of morphisms is given by $(g \circ f)_m :=\sum_{i+j=m} g_if_j$.

A morphism $f = \{f_m\}_{m\geq0}$ is
 \emph{strict} if $f_i = 0$ for all $i > 0$. The \emph{identity} morphism $1_A : A \longrightarrow A$ is the strict morphism
given by $(1_A)_0(x) = x.$ A morphism $f = \{f_i\}$ is an \emph{isomorphism} if and only if $f_0$ is an isomorphism of
bigraded $R$-modules. 
\end{defin}
Note that if $f$ is an isomorphism, then an inverse of $f$ is obtained from an inverse of $f_0$ by solving a triangular system of linear equations.

\bigskip
We finish this section by recalling the symmetric monoidal structure on $\tc$.
\begin{lem}[{{\cite[Lemma 3.3]{whitehouse}}}]\label{tensortwisted}
The category $(\tc,\otimes,R)$ is symmetric monoidal, where the monoidal structure is given
by the bifunctor
\[\otimes : \tc \times \tc \longrightarrow \tc.\]
On objects it is given by $((A, d^A_m), (B, d^B_m)) \longrightarrow (A \otimes B, d^A_m \otimes 1 + 1 \otimes d^B_m)$ and on morphisms it is
given by $(f, g) \longrightarrow f \otimes g$, where $(f \otimes g)_m :=\sum_{i+j=m} f_i \otimes g_j$. In particular, by the Koszul sign rule we
have that \[(f_i \otimes g_j)(x\otimes z) = (-1)^{\langle g_j ,x\rangle}f_i(x)\otimes g_j(z).\] The symmetry isomorphism is given by the strict
morphism of twisted complexes
\[
\tau_{A\otimes B} \colon A \otimes B \longrightarrow B \otimes A,\ x \otimes y\mapsto (-1)^{\langle x,y\rangle}y \otimes x.
\]
\end{lem}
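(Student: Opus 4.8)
The plan is to build the symmetric monoidal structure on $\tc$ on top of the one already available on $\bgmod$, checking at each stage that the twisting data (the families $\{d_m\}$) is compatible with the structure maps. Four things must be verified: that the proposed tensor product of objects is again a twisted complex; that the tensor product of morphisms is again a morphism of twisted complexes; that $\otimes$ is functorial; and that the associativity, unit, and symmetry constraints of $\bgmod$ lift to strict isomorphisms of twisted complexes satisfying the coherence axioms.

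First I would verify the object-level claim, namely that $d^{A\otimes B}_m := d^A_m \otimes 1 + 1 \otimes d^B_m$ satisfies the twisted-complex axiom $\sum_{i+j=m}(-1)^i d^{A\otimes B}_i d^{A\otimes B}_j = 0$. Expanding each product by the Koszul composition rule $(f\otimes g)(f'\otimes g') = (-1)^{\langle g, f'\rangle}(ff')\otimes(gg')$ splits the sum into four families of terms. The pure $A$ terms assemble into $\big(\sum_{i+j=m}(-1)^i d^A_i d^A_j\big)\otimes 1$ and the pure $B$ terms into $1 \otimes \big(\sum_{i+j=m}(-1)^i d^B_i d^B_j\big)$, both of which vanish by the axioms for $A$ and $B$. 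The remaining mixed terms pair up as $d^A_i \otimes d^B_j$ and, after reindexing the second family, the coefficient of each such term is $(-1)^i + (-1)^j(-1)^{\langle d^B_i, d^A_j\rangle}$; since $d^B_i$ and $d^A_j$ have bidegrees $(i,1-i)$ and $(j,1-j)$, one computes $j + \langle d^B_i, d^A_j\rangle \equiv i+1 \pmod 2$, so the two contributions cancel. This sign identity is the crux of the whole lemma.

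Next I would run the analogous computation for morphisms, checking that $(f\otimes g)_m = \sum_{i+j=m} f_i \otimes g_j$ satisfies the morphism condition of \Cref{twistedmorphisms}; the bookkeeping is parallel, again reducing to the morphism conditions for $f$ and $g$ separately once the mixed signs are shown to match. Functoriality — that $(f'\otimes g')\circ(f\otimes g) = (f'\circ f)\otimes(g'\circ g)$ and that identities are preserved — then follows by comparing the two Cauchy-type convolutions $(g\circ f)_m = \sum_{i+j=m} g_i f_j$ and $(f\otimes g)_m = \sum_{i+j=m} f_i \otimes g_j$, using the Koszul rule to reorder factors.

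Finally, for the coherence data I would take the associator, the unitors, and the symmetry $\tau$ to be the strict morphisms of twisted complexes whose degree-zero components are the corresponding structure isomorphisms of $\bgmod$ (with $\tau_0$ the Koszul swap). A strict morphism $\phi$ is a morphism of twisted complexes precisely when $d^{\mathrm{target}}_m \phi_0 = \phi_0 d^{\mathrm{source}}_m$ for every $m$, and this holds because the structure maps of $\bgmod$ are assembled from identities and the symmetry, which commute with $d^A_m \otimes 1 + 1 \otimes d^B_m$ up to the signs already built into the Koszul convention. Naturality and the pentagon, triangle, and hexagon identities, together with $\tau^2 = 1$, then reduce to the corresponding identities in $\bgmod$ since all the structure morphisms are strict and determined by their degree-zero parts. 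The main obstacle throughout is the consistent tracking of Koszul signs attached to the bidegrees $(m, 1-m)$ of the twisting maps; once the single sign identity $j + \langle d^B_i, d^A_j\rangle \equiv i+1$ is in hand, every remaining verification is a routine, if lengthy, convolution computation.
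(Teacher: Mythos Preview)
Your outline is correct, and the key sign identity you isolate is exactly the point: with $d^B_j$ and $d^A_i$ of bidegrees $(j,1-j)$ and $(i,1-i)$ one has $\langle d^B_j,d^A_i\rangle = ij+(1-i)(1-j)\equiv 1+i+j$, so $(-1)^j(-1)^{\langle d^B_j,d^A_i\rangle}=-(-1)^i$ and the mixed terms cancel. (Note that this pairing is symmetric in $i$ and $j$, so your writing $\langle d^B_i,d^A_j\rangle$ after reindexing is harmless.) The remaining checks for morphisms, functoriality, and the strict coherence data reduce, as you say, to the same sign bookkeeping together with the already-established $\bgmod$ axioms.

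The paper does not actually prove this lemma: it states it and refers the reader to \cite[Lemma~3.3]{whitehouse} for the proof. Your argument is a direct reconstruction of that external verification, so there is no meaningful difference in approach to report; you have simply supplied what the paper outsources.
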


The internal hom on bigraded modules can be extended to twisted complexes via the following lemma.

\begin{lem}[{{\cite[Lemma 3.4]{whitehouse}}}]\label{di} Let $A,B$ be twisted complexes. For $f \in[A,B]^v_u$, setting
\[(d_if) := (-1)^{i(u+v)}d^B_if - (-1)^vfd^A_i\]
for $i \geq 0$ endows $[A,B]^\ast_\ast$ with the structure of a twisted complex.
\end{lem}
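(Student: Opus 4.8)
The plan is to verify directly the two conditions of \Cref{twistedcomplex}: that each $d_i$ is an operator of bidegree $(i,1-i)$ on the bigraded module $[A,B]^*_*$, and that the family $\{d_i\}_{i\geq 0}$ satisfies $\sum_{i+j=m}(-1)^i d_i d_j = 0$ for all $m\geq 0$. The bidegree is immediate from the formula: for $f\in[A,B]^v_u$ both $d^B_i f$ and $f d^A_i$ have bidegree $(u+i,\,v+1-i)$, since $d^A_i$ and $d^B_i$ have bidegree $(i,1-i)$, so $d_i$ raises bidegree by $(i,1-i)$ as required. The case $m=0$ of the quadratic relation is $d_0 d_0 = \delta^2 = 0$, which is already known because $d_0$ coincides with the differential $\delta$ of \Cref{delta1}; this serves as a sanity check on the normalisation of the signs.

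For the general relation I would fix $f\in[A,B]^v_u$ and expand $\sum_{i+j=m}(-1)^i d_i(d_j f)$. The essential subtlety is that $d_j f$ has bidegree $(u+j,\,v+1-j)$, so when the outer operator $d_i$ is applied one must substitute this shifted bidegree into the defining formula, turning the exponents $i(u+v)$ and $v$ into $i(u+v+1)$ and $v+1-j$ respectively. Expanding each product then yields four families of terms indexed by $i+j=m$: the pure terms $d^B_i d^B_j f$ and $f\,d^A_j d^A_i$, together with the mixed terms $d^B_i f\,d^A_j$ and $d^B_j f\,d^A_i$. I would collect each family on its own, factor out the common sign, and reindex the $A$-terms by the swap $i\leftrightarrow j$.

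The aim of the bookkeeping is that, after the signs are extracted, the pure $B$-family becomes $f$ postcomposed with $\sum_{i+j=m}(-1)^i d^B_i d^B_j$ and the pure $A$-family becomes $f$ precomposed with $\sum_{i+j=m}(-1)^i d^A_i d^A_j$, so that both vanish by the twisted-complex relations for $B$ and $A$, while the two mixed families cancel against one another under the reindexing. The main obstacle is exactly this sign accounting: the exponents $(-1)^{i(u+v)}$ and $(-1)^v$ are calibrated so that the degree shift $(u,v)\mapsto(u+j,\,v+1-j)$ in the middle argument converts the naive composites into precisely the weighted sums of \Cref{twistedcomplex} and simultaneously forces the mixed contributions to cancel. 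Confirming that these three cancellations hold for every $m$, and not merely for small $m$, is the delicate part; I would organise it by treating the four families in parallel and checking that each extracted sign depends on the data only through $m$, in the manner needed for the structure relations to apply.
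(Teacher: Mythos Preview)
The paper does not supply its own argument here, referring instead to \cite[Lemma~3.4]{whitehouse}; your four-term expansion is exactly the method used there, so on the level of strategy there is nothing to compare.

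There is, however, a genuine gap in the mechanism you describe. You claim that the pure $B$-family collapses to $\bigl(\sum_{i+j=m}(-1)^i d^B_i d^B_j\bigr)f$ and hence vanishes by the twisted-complex relation for $B$. Carrying out the bookkeeping with the displayed formula gives instead
\[
\sum_{i+j=m}(-1)^i(-1)^{i(u+v+1)+j(u+v)}\,d^B_id^B_jf
=(-1)^{m(u+v)}\sum_{i+j=m}d^B_id^B_jf,
\]
the alternating sign having been absorbed. Already at $m=1$ this is $(-1)^{u+v}(d^B_0d^B_1+d^B_1d^B_0)f$, which by the $B$-relation equals $2(-1)^{u+v}d^B_0d^B_1f$ and is nonzero in characteristic $\neq 2$; the mixed families likewise fail to cancel, contributing $-2(-1)^vd^B_0fd^A_1-2(-1)^ud^B_1fd^A_0$. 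Taking $A$ with $d^A_i=0$ for $i\geq1$ and $B$ with $d^B_0d^B_1\neq0$ gives an explicit obstruction: the $m=1$ relation would force $(-1)^vd^B_0d^B_1f=d^B_1fd^A_0$ for every $f$, which is not a consequence of the axioms.

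The likely explanation is a transcription slip from \cite{whitehouse}: placing the factor $(-1)^{i(u+v)}$ on the second summand rather than the first, so that $d_if=d^B_if-(-1)^{v+i(u+v)}fd^A_i$, makes your mechanism work verbatim---the $B$-family then carries the sign $(-1)^i$, the $A$-family the sign $(-1)^j$, and the two mixed families cancel in pairs. Before attempting the general-$m$ verification you should check the source for the intended sign.
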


\section{Totalization}\label{sec:total}

In order to relate derived $A_\infty$-algebras to the classical $A_\infty$-settings, we will make use of totalization techniques. 
We will introduce the specific totalization functor we are working with and the required related tools. This functor and its enriched versions are key to establish a correspondence between $A_\infty$-algebras and derived $A_\infty$-algebras.

\begin{defin}
The \emph{totalization} $\Tot(A)$ of a bigraded $R$-module $A = \{A^j_i \}$ is the graded $R$-module given by
\[\Tot(A)^n \coloneqq
\bigoplus_{i<0}A^{n-i}_i \oplus\prod_{i\geq 0}A^{n-i}_i .\]
The \emph{column filtration} of $\Tot(A)$ is the filtration given by \[F_p\Tot(A)^n \coloneqq\prod_{i\geq p} A^{n-i}_i .\]
\end{defin}

Given a twisted complex $(A, d_m)$, define a map $d : \Tot(A) \longrightarrow \Tot(A)$ of degree $1$ by letting
\[d(x)_j \coloneqq \sum_{m\geq0}(-1)^{mn}d_m(x_{j-m})\]
for $x = (x_i)_{i\in\Z} \in\Tot(A)^n$. Here $x_i \in A^{n-i}_i$ denotes the $i$-th component of $x$, and $d(x)_j$ denotes the $j$-th component of $d(x)$. Note
that, for a given $j \in\Z$ there is a sufficiently large $m \geq 0$ such that $x_{j-m'} = 0$ for all $m' \geq m$. Hence
$d(x)_j$ is given by a finite sum. Also, for negative $j$ sufficiently large, one has $x_{j-m} = 0$ for all $m \geq 0$, which
implies $d(x)_j = 0$.

Given a morphism $f : (A, d_m) \longrightarrow (B, d_m)$ of twisted complexes, let the \emph{totalization of $f$} be the map $\Tot(f) : \Tot(A) \longrightarrow \Tot(B)$ of degree 0 defined by
\[(\Tot(f)(x))_j \coloneqq \sum_{m\geq0}(-1)^{mn}f_m(x_{j-m})\]
 for $x = (x_i)_{i\in\Z} \in\Tot(A)^n$.

\begin{thm}[{{\cite[Theorem 3.8]{whitehouse}}}]
Let $F$ be the column filtration of $\Tot(A)$. The assignments $(A, d_m) \mapsto (\Tot(A), d, F)$
and $f \mapsto \Tot(f)$ define a functor $\Tot : \tc \to \fc$ which is an isomorphism of categories when restricted to its image.
\end{thm}

For a filtered complex of the form $(\Tot(A),d,F)$ where $A = \{A^j_i \}$ is a bigraded $R$-module, we can recover the twisted complex structure on  $A$ as follows. For all $m \geq 0$, let
$d_m : A \longrightarrow A$ be the morphism of bidegree $(m,1-m)$ defined by 
\[d_m(x) = (-1)^{nm}d(x)_{i+m},\] 
where $x \in A^{n-i}_i$ and $d(x)_k$ denotes the $k$-th component of $d(x)$. Note that $d(x)_k$ lies in $A^{n+1-k}_k$.

We will consider the following bounded categories since the totalization functor has better properties when restricted to them. 

\begin{defin}
We let $\tc^b$, $\vbc^b$ and $\bgmod^b$ be the full subcategories of \emph{horizontally bounded on the right} graded twisted
complexes, vertical bicomplexes and bigraded modules respectively. This means that if $A=\{A^j_i\}$ is an object of any of this categories, then there exists $i$ such that $A^j_{i'}=0$ for $i'>i$.

We let $\fmod^b$ and $\fc^b$ be the full subcategories of bounded filtered modules, respectively complexes, i.e.
the full subcategories of objects $(K, F)$ such that there exists some $p$ with the property that $F_{p'}K^n = 0$ for all $p'>p$. We refer to all of these as the \emph{bounded subcategories} of $\tc$, $\vbc$, $\bgmod$, $\fmod$ and $\fc$ respectively.
\end{defin}

\begin{propo}[{{\cite[Proposition 3.11]{whitehouse}}}]\label{monoidal}
The functors $\Tot : \bgmod \longrightarrow \fmod$ and $\Tot : \tc \longrightarrow \fc$ are lax symmetric
monoidal with structure maps
\[\epsilon : R \longrightarrow \Tot(R)\text{ and }\mu=\mu_{A,B} : \Tot(A) \otimes \Tot(B) \longrightarrow \Tot(A \otimes B)\]
given by $\epsilon = 1_R$. For $x = (x_i)_i \in\Tot(A)^{n_1}$ and  $y=(y_j)_j \in\Tot(B)^{n_2}$,
\begin{equation}\label{mu1}
\mu(x \otimes y)_k \coloneqq
\sum_{k_1+k_2=k}(-1)^{k_1n_2}x_{k_1} \otimes y_{k_2} .
\end{equation}

When restricted to the bounded case, $\Tot : \bgmod^b
 \longrightarrow \fmod^b$ and $\Tot : \tc^b \longrightarrow \fc^b$ are
strong symmetric monoidal functors.
\end{propo}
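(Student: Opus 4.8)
The plan is to establish the two statements in parallel, proving the underlying $R$-module assertions first (these already give the lax structure for $\Tot : \bgmod \to \fmod$) and then layering on compatibility with differentials to upgrade the argument to $\tc \to \fc$. Functoriality of $\Tot$ is already in hand from the preceding theorem, so what remains is to produce the structure maps and verify the coherence axioms.

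First I would confirm that $\epsilon$ and $\mu$ are legitimate morphisms in the target categories. Since $R$ is concentrated in bidegree $(0,0)$ with the trivial filtration, $\Tot(R)=R$ and $\epsilon=1_R$ is admissible. For $\mu$, the summand $x_{k_1}\otimes y_{k_2}$ with $k_1+k_2=k$ lands in $(A\otimes B)^{(n_1+n_2)-k}_k$, the correct component of $\Tot(A\otimes B)^{n_1+n_2}$; the defining sum is finite because $x$ and $y$ lie in coproducts in negative horizontal degree, so $x_{k_1}=0$ for $k_1\ll 0$ and $y_{k-k_1}=0$ for $k_1\gg 0$, confining $k_1$ to a finite range (this also shows $\mu(x\otimes y)_k=0$ for $k\ll 0$, as required for membership in the coproduct part). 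Filtration compatibility is immediate: if $x\in F_{p_1}$ and $y\in F_{p_2}$, every nonzero summand has $k_1\geq p_1$ and $k_2\geq p_2$, so $\mu(x\otimes y)_k=0$ for $k<p_1+p_2$ and $\mu$ lands in $F_{p_1+p_2}$.

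Next I would check naturality of $\mu$ and the associativity, unitality and symmetry coherence diagrams. At the level of bigraded modules this is routine: unitality collapses because the $R$-factor sits in bidegree $(0,0)$ and kills every sign, while associativity and symmetry reduce to comparing the Koszul signs $(-1)^{k_1n_2}$ carried by the two composites, a bookkeeping of exponents mod $2$ using $k=k_1+k_2$ and the pairing $\langle-,-\rangle$. The genuinely delicate step, and the one I expect to be the main obstacle, is showing that in the twisted case $\mu$ (and $\Tot(f)$, for naturality) commutes with the totalized differential $d(z)_j=\sum_{m\geq 0}(-1)^{mn}d_m(z_{j-m})$. Verifying $d_{\Tot(A\otimes B)}\circ\mu=\mu\circ(d\otimes 1+1\otimes d)$ forces me to reconcile three independent sources of signs: the $(-1)^{mn}$ of totalization, the internal Koszul signs of the twisted tensor rule $d^{A\otimes B}_m=\sum_{i+j=m}d^A_i\otimes d^B_j$ from \Cref{tensortwisted}, and the factor $(-1)^{k_1n_2}$ built into $\mu$. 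I expect the substitution $k=k_1+k_2$ together with careful tracking of the bidegree shifts $(m,1-m)$ of each $d_m$ to make the two sides agree term by term.

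Finally, for the bounded subcategories I would prove $\mu$ is an isomorphism, hence that the functors are strong monoidal. If $A$ and $B$ are horizontally bounded on the right, say $A^\bullet_i=0$ for $i>a$ and $B^\bullet_i=0$ for $i>b$, then $A\otimes B$ vanishes in horizontal degrees $>a+b$, so the nonnegative-degree products appearing in $\Tot(A)$, $\Tot(B)$ and $\Tot(A\otimes B)$ are all finite and collapse to direct sums. Since tensor product commutes with direct sums, $\mu$ is then nothing but the canonical distributivity isomorphism $\big(\bigoplus_{k_1}A^\bullet_{k_1}\big)\otimes\big(\bigoplus_{k_2}B^\bullet_{k_2}\big)\cong\bigoplus_k\bigoplus_{k_1+k_2=k}A^\bullet_{k_1}\otimes B^\bullet_{k_2}$ decorated by the invertible signs $(-1)^{k_1n_2}$, and is therefore bijective. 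Since $\mu$ is already a morphism of filtered modules (resp. filtered complexes), this bijection is an isomorphism in $\fmod$ (resp. $\fc$), completing the proof.
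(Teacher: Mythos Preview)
The paper does not actually prove this proposition: it simply records it as \cite[Proposition 3.11]{whitehouse} and moves on, so there is no in-paper argument to compare against. Your outline is a reasonable direct verification of the result and follows the expected route (well-definedness of $\mu$, coherence axioms via sign bookkeeping, compatibility with the totalized differential, and invertibility in the bounded case via distributivity of $\otimes$ over finite direct sums).

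One small point worth tightening: in the last paragraph you conclude that because $\mu$ is a bijective morphism of filtered modules it is an isomorphism in $\fmod$. This is not automatic in general, since a filtration-preserving bijection need not have filtration-preserving inverse. In the bounded situation it does hold, because both filtrations are the column filtrations on the same underlying direct-sum decomposition and $\mu$ restricts to a bijection on each filtration level; alternatively, the paper's \Cref{mui} writes down $\mu^{-1}$ explicitly and one sees directly that it respects filtrations. Either way the gap is easily closed.
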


\begin{remark}\label{heuristic}
There is a certain heuristic to obtain the sign appearing in the definition of $\mu$ in \Cref{monoidal}. In the bounded case, we can write \[\Tot(A)=\bigoplus_i A_i^{n-i}.\]
As direct sums commute with tensor products, we have
\[\Tot(A)\otimes\Tot(B)=(\bigoplus A_i^{n-i})\otimes \Tot(B)\cong \bigoplus_i  (A_i^{n-i}\otimes \Tot(B)).\]

In the isomorphism we can interpret that each $A_i^{n-i}$ passes by $\Tot(B)$. Since $\Tot(B)$ used total grading, we can think of this degree as being the horizontal degree, while having 0 vertical degree. Thus, using the Koszul sign rule we would get precisely the sign from \Cref{monoidal}. This explanation is just an intuition, and opens the door for other possible sign choices: what if we decide to distribute $\Tot(A)$ over $\bigoplus_i B_i^{n-i}$ instead, or if we consider the total degree as the vertical degree? These alternatives lead to other valid definitions of $\mu$, and we will explore the consequences of some of them in \Cref{othermu}.
\end{remark}

\begin{lem}\label{mui}
In the conditions of \Cref{monoidal} for the bounded case, the inverse
\[\mu^{-1}:\Tot(A_{(1)}\otimes\cdots\otimes A_{(m)})\to \Tot(A_{(1)})\otimes\cdots\otimes \Tot(A_{(m)})\]
is given on pure tensors (for notational convenience) as
\begin{equation}\label{mu}
\mu^{-1}(x_{(1)}\otimes\cdots\otimes x_{(m)})=(-1)^{\sum_{j=2}^m n_j\sum_{i=1}^{j-1}k_i}x_{(1)}\otimes\cdots\otimes x_{(m)},
\end{equation}
where $x_{(l)}\in (A_{(m)})_{k_l}^{n_l-k_l}$.
\end{lem}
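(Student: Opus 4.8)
The plan is to reduce the $m$-fold statement to the binary structure map of \Cref{monoidal} and then to induct on $m$, exploiting the fact that in the bounded case every map involved is just multiplication by a sign on pure tensors.

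First I would record the pure-tensor behaviour of the binary map. In the bounded case $\Tot(A)=\bigoplus_i A_i^{n-i}$, so an element $x_{(l)}\in (A_{(l)})_{k_l}^{n_l-k_l}$ sits in $\Tot(A_{(l)})^{n_l}$ as the element supported only in horizontal degree $k_l$. For such pure inputs all but one summand of \eqref{mu1} vanishes, so if $x$ is supported in horizontal degree $a$ and $y$ has total degree $n_2$, then $\mu(x\otimes y)=(-1)^{an_2}x\otimes y$. In particular $\mu$, and hence $\mu^{-1}$, is multiplication by $\pm 1$ on pure tensors, and $\mu^{-1}$ carries exactly the same sign as $\mu$; thus it suffices to compute the sign of the forward $m$-fold map. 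Since $\Tot$ is strong symmetric monoidal in the bounded case (\Cref{monoidal}), coherence guarantees that this $m$-fold map is independent of the chosen bracketing, so I am free to compute it by iterating the binary $\mu$ in any convenient order.

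Next I would run the induction, writing the $m$-fold map as $\mu\circ(\mathrm{id}\otimes\mu_{m-1})$, i.e. grouping the first factor against the totalization of the remaining $m-1$ factors. The cases $m=1$ (identity, empty exponent) and $m=2$ (which is \eqref{mu1} on pure tensors) are immediate. For the inductive step, the inductive hypothesis gives $\mu_{m-1}(x_{(2)}\otimes\cdots\otimes x_{(m)})=(-1)^{\epsilon}\,x_{(2)}\otimes\cdots\otimes x_{(m)}$ with $\epsilon=\sum_{j=3}^m n_j\sum_{i=2}^{j-1}k_i$; this element has total degree $\sum_{j=2}^m n_j$ and is supported in horizontal degree $\sum_{j=2}^m k_j$. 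Since $\mu_{m-1}$ has degree $0$, applying $\mathrm{id}\otimes\mu_{m-1}$ introduces no further sign, and then applying the binary $\mu$ to $x_{(1)}$ (supported in horizontal degree $k_1$) against this element (of total degree $\sum_{j=2}^m n_j$) contributes the extra factor $(-1)^{k_1\sum_{j=2}^m n_j}$.

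It then remains only to verify the bookkeeping identity
\[
\sum_{j=3}^m n_j\sum_{i=2}^{j-1}k_i + k_1\sum_{j=2}^m n_j = \sum_{j=2}^m n_j\sum_{i=1}^{j-1}k_i,
\]
which follows by splitting off the $i=1$ term on the right-hand side. This produces the exponent claimed in \eqref{mu} for the forward map, and by the first step $\mu^{-1}$ carries the same sign, completing the proof. The one genuinely delicate point throughout is the sign bookkeeping in \eqref{mu1}: the first tensor factor contributes its horizontal (component) degree while the second contributes its total degree, and under iteration the role of the ``second factor'' is played by the totalization of all remaining factors, whose total degree is $\sum_{j\geq 2}n_j$ rather than any single $k_j$. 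Keeping that asymmetry straight is exactly what makes the two sides of the identity agree.
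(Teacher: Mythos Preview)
Your proof is correct and follows essentially the same strategy as the paper: establish the $m=2$ case from \eqref{mu1} and then induct. The paper spells out the $m=2$ case by decomposing a general element of $\Tot(A\otimes B)$ via distributivity and then simply asserts that ``the general case follows inductively,'' whereas you take the slightly slicker route of noting that $\mu$ is $\pm 1$ on pure tensors (hence self-inverse there), computing the forward sign instead, and carrying out the inductive bookkeeping explicitly; the content is the same.
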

\begin{proof}
For the case $m=2$,
\[\mu^{-1}:\Tot(A\otimes B)\to \Tot(A)\otimes \Tot(B)\]
is computed explicitly as follows.
Let  $c\in\Tot(A\otimes B)^n$. By definition, we have
\[\Tot(A\otimes B)^n=\bigoplus_k (A\otimes B)^{n-k}_k=\bigoplus_k\underset{n_1+n_2=n}{\bigoplus_{k_1+k_2=k}}A_{k_1}^{n_1-k_1}\otimes B_{k_2}^{n_2-k_2}.\]
And thus, $c=(c_k)_k$ may be written as a finite sum $c=\sum_k c_k$, where 
\[c_k=\underset{n_1+n_2=n}{\sum_{k_1+k_2=k}}x_{k_1}^{n_1-k_1}\otimes y_{k_2}^{n_2-k_2}.\]
Here, we introduced superscripts to indicate the vertical degree, which, unlike in the definition of $\mu$ (\Cref{mu1}), is not solely determined by the horizontal degree since the total degree also varies. However we are going to omit them in what follows for simplicity of notation. Distributivity allows us to rewrite $c$ as
\[c=\sum_k \underset{n_1+n_2=n}{\bigoplus_{k_1+k_2=k}}x_{k_1}\otimes y_{k_2}=\sum_{n_1+n_2=n}\sum_{k_1}\sum_{k_2}(x_{k_1}\otimes y_{k_2})=\sum_{n_1+n_2=n}\left(\sum_{k_1}x_{k_1}\right)\otimes\left(\sum_{k_2}y_{k_2}\right).\]
Therefore, $\mu^{-1}$ can be defined as
\[\mu^{-1}(c)=\sum_{n_1+n_2=n}\left(\sum_{k_1}(-1)^{k_1n_2}x_{k_1}\right)\otimes\left(\sum_{k_2}y_{k_2}\right).\]

The general case follows inductively.
\end{proof}

\section{Enriched categories and enriched totalization}\label{sec:enrichment}

We collect some notions related to enriched categories from  \cite[\S 4.2]{whitehouse} that we will need as a categorical setting for our results on derived $A_\infty$-algebras. The purpose of this section is to introduce notation and the categories that we are working on. As such, it is quite dry in nature but it is both necessary and hopefully convenient for the reader. We assume the reader to be familiar with the basics of monoidal categories and enrichments, see e.g. \cite{riehl} for an excellent source. 

\begin{defin}
Let $(\VV ,\otimes, 1)$ be a symmetric monoidal category and let $(\CC,\otimes, 1)$ be a monoidal category. We say that $\CC$ is a \emph{monoidal category over $\VV$} if we have an external tensor product $\ast :\VV \times \CC \longrightarrow \CC$ with natural unit and associativity isomorphisms.

\end{defin}

\begin{remark}\label{underline}
We will also assume that there is a bifunctor $\uC(-,-) : \CC^{op} \times  \CC \longrightarrow \VV$ such that we have natural
bijections
\[\Hom_\CC(C \ast X, Y ) \cong \Hom_\VV (C,\uC(X, Y )).\]
Thus, we get a $\VV$-enriched category $\uC$ with the same objects as $\CC$ and with hom-objects given by $\uC (-,-)$. The unit
morphism $u_A : 1 \longrightarrow \uC (A,A)$ corresponds to the identity map in $\CC$ under the adjunction, and the
composition morphism is given by the adjoint of the composite
\[(\uC (B,C) \otimes \uC (A,B)) \ast A
\cong \uC (B,C) \ast (\uC (A,B) \ast A)
\xrightarrow{id\ast ev_{AB}}
\uC (B,C) \ast B
\xrightarrow{ev_{BC}} C,\]
where $ev_{AB}$ is the adjoint of the identity $\uC (A,B) \longrightarrow \uC (A,B)$. Furthermore, $\uC$ is a monoidal $\VV$-enriched category, namely we have an
enriched functor
\[\underline{\otimes} : \uC \times \uC \longrightarrow \uC\]
where $\uC \times \uC$ is the enriched category with hom-objects
$\uC \times \uC ((X, Y ), (W,Z)) \coloneqq \uC (X,W) \otimes \uC (Y,Z).$
In particular we get maps in $\VV$
\[\uC (X,W) \otimes \uC (Y,Z) \longrightarrow \uC (X \otimes Y,W \otimes Z),\]
given by the adjoint of the composite
\[(\uC (X,W) \otimes \uC (Y,Z)) \ast (X \otimes Y )\cong (\uC (X,W) \ast X) \otimes (\uC (Y,Z) \ast Y )
\xrightarrow{ev_{XW}\otimes ev_{Y Z}} W \otimes Z.\]
\end{remark}

\begin{defin}
Let $\CC$ and $\DD$ be monoidal categories over $\VV$. A \emph{lax functor over $\VV$} consists of a functor $F : \CC \longrightarrow \DD$ together with a natural transformation \[\nu_F : - \ast_\DD F(-) \Rightarrow F(- \ast_\CC -)\]
which is associative and unital with respect to the monoidal structures over $\VV$ of $\CC$ and $\DD$, see \cite[Proposition 10.1.5]{riehl} for  the explicit coherence axioms. If $\nu_F$ is a natural isomorphism,
we say $F$ is a \emph{functor over $\VV$}.
\end{defin}

Natural transformations over $\VV$ and (lax) monoidal functors over $\VV$ are defined analogously.

\begin{propo}\label{enrichedtrans}
Let $F,G : \CC \longrightarrow \DD$ be lax functors over $\VV$. Then $F$ and $G$ extend to $\VV$-enriched
functors
\[\underline{F},\underline{G} : \uC \longrightarrow \uD\]
where $\uC$ and $\uD$ denote the $\VV$-enriched categories corresponding to $\CC$ and $\DD$ as described in \Cref{underline}. Moreover, any natural transformation $\mu : F \Rightarrow G$ over $\VV$ also extends to a $\VV$-enriched natural
transformation
\[\underline{\mu} : \underline{F} \Rightarrow \underline{G}.\]
In particular, if $F$ is lax monoidal over $\VV$, then $\underline{F}$ is lax monoidal in the enriched sense, where the monoidal structure on $\uC \times \uC$ is described in \Cref{underline}.
\end{propo}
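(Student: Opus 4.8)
The plan is to build every piece of enriched data by transporting the ordinary (unenriched) data across the defining adjunction $\Hom_\CC(C * X, Y) \cong \Hom_\VV(C,\uC(X,Y))$ from \Cref{underline}, and then to verify the enriched axioms by transposing the corresponding unenriched diagrams and invoking the coherence of $\nu_F$ and of the over-$\VV$ structure. To define $\underline{F} : \uC \to \uD$ I would set $\underline{F}(X) := F(X)$ on objects, and on hom-objects let $\underline{F}_{X,Y} : \uC(X,Y) \to \uD(FX,FY)$ be the adjoint, under the $\DD$-adjunction, of the composite in $\DD$
\[
\uC(X,Y) *_\DD FX \xrightarrow{\ \nu_F\ } F\bigl(\uC(X,Y) *_\CC X\bigr) \xrightarrow{\ F(ev_{X,Y})\ } FY,
\]
where $ev_{X,Y}$ is the counit of the $\CC$-adjunction, i.e.\ the adjoint of $\mathrm{id}_{\uC(X,Y)}$. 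Preservation of identities then reduces, after transposing, to the unit axiom of $\nu_F$ applied to the unit morphism $u_X : 1 \to \uC(X,X)$, and preservation of composition reduces to the associativity axiom of $\nu_F$ together with the explicit description of the enriched composition morphisms of $\uC$ and $\uD$ in \Cref{underline}; both become equalities of maps of the shape $C *_\DD FX \to FY$ and hold by diagram chases. The same recipe applied to $G$ produces $\underline{G}$.

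Next, to extend $\mu : F \Rightarrow G$ I would define its enriched component $\underline{\mu}_X : 1 \to \uD(FX,GX)$ to be the adjoint of $1 *_\DD FX \cong FX \xrightarrow{\mu_X} GX$. Enriched naturality of $\underline{\mu}$ asserts the equality of two maps $\uC(X,Y) \to \uD(FX,GY)$ assembled from $\underline{\mu}$, $\underline{F}$, $\underline{G}$ and the enriched composition of $\uD$; transposing across the adjunction this becomes an equality of maps $\uC(X,Y) *_\DD FX \to GY$ in $\DD$. Unwinding the definition of $\underline{F}$ on one side and of $\underline{G}$ on the other, the two sides differ precisely by the over-$\VV$ compatibility $\nu_G \circ (1 *_\DD \mu_X) = \mu_{C *_\CC X} \circ \nu_F$ of \Cref{underline} together with the ordinary naturality square of $\mu$ applied to $ev_{X,Y}$, which is exactly what closes the gap.

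For the lax monoidal statement, I would regard $\CC \times \CC$ as a monoidal category over $\VV$ and view $-\otimes_\DD- \circ (F \times F)$ and $F \circ (-\otimes_\CC-)$ as lax functors $\CC \times \CC \to \DD$ over $\VV$. By hypothesis the structure map $\mu$ is a natural transformation between them over $\VV$, so the construction of the previous paragraph upgrades it to a $\VV$-enriched natural transformation between the associated enriched functors $\uC \times \uC \to \uD$; the unit $\epsilon : 1_\DD \to F(1_\CC)$ carries over unchanged. After checking that the enriched functor $\underline{\otimes}$ of \Cref{underline} intertwines the relevant composites, the enriched associativity and unit coherence diagrams for the triple $(\underline{F}, \epsilon, \underline{\mu})$ are obtained by transposing the corresponding coherence diagrams for $(F, \epsilon, \mu)$, which hold because $F$ is lax monoidal over $\VV$.

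I expect the main obstacle to be the bookkeeping in the enriched naturality verification of the second step. Matching the two transposed maps $\uC(X,Y) *_\DD FX \to GY$ requires carefully unwinding the enriched composition morphisms of \Cref{underline} and tracking the coherence isomorphisms $(C \otimes D) * X \cong C * (D * X)$ together with the interaction of $\nu_F$, $\nu_G$ with the evaluation maps. Every individual cell commutes for a formal reason — naturality, the over-$\VV$ axioms, or the triangle identities of the adjunction — so the difficulty is organizational rather than conceptual, and the same remark applies to the coherence checks in the lax monoidal step.
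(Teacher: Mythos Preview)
The paper does not actually prove this proposition; it is quoted verbatim as \cite[Proposition 4.11]{whitehouse} and no argument is supplied. Your proposal is therefore not being compared against an in-paper proof but stands on its own, and the construction you outline --- defining $\underline{F}_{X,Y}$ as the adjoint of $\uC(X,Y)*_\DD FX \xrightarrow{\nu_F} F(\uC(X,Y)*_\CC X) \xrightarrow{F(ev)} FY$, extending $\mu$ by taking the adjoint of its components, and reducing every enriched axiom to an unenriched diagram via transposition --- is exactly the standard one (cf.\ \cite[\S10.1]{riehl}, also cited here) and is correct. Your diagnosis that the verifications are organizational rather than conceptual is accurate; the only point worth flagging is that in the lax monoidal step you should check that $\CC\times\CC$ is genuinely monoidal over $\VV$ in the required sense (external product $C*(X,Y):=(C*X,Y)\cong(X,C*Y)$) and that the enriched product functor $\underline{\otimes}$ of \Cref{underline} agrees with the enrichment of the ordinary $\otimes$ produced by your recipe, but this is routine.
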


\begin{lem}
Let $F,G:\CC\to\DD$ lax functors over $\VV$ and let $\mu : F\Rightarrow G$ a natural transformation over $\VV$. For every $X\in\CC$ and $Y\in\DD$ there is a map \[\uD(GX,Y)\to\uD(FX,Y)\] that is an isomorphism if $\mu$ is an isomorphism.
\end{lem}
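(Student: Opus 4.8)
The plan is to realize the claimed map as enriched precomposition with the component $\mu_X \colon FX \to GX$, working inside the $\VV$-enriched category $\uD$ supplied by \Cref{underline}. First I would invoke \Cref{enrichedtrans}: the natural transformation $\mu$ over $\VV$ extends to a $\VV$-enriched natural transformation $\underline{\mu}\colon\underline{F}\Rightarrow\underline{G}$, whose component at $X$ is a morphism $\underline{\mu}_X\colon 1\to\uD(FX,GX)$ in $\VV$, namely the name in $\uD$ of the underlying morphism $\mu_X\in\Hom_\DD(FX,GX)$.

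Next I would define the desired map $\mu_X^{\ast}\colon\uD(GX,Y)\to\uD(FX,Y)$ as the composite in $\VV$
\[
\uD(GX,Y)\cong\uD(GX,Y)\otimes 1
\xrightarrow{\ 1\otimes\underline{\mu}_X\ }
\uD(GX,Y)\otimes\uD(FX,GX)
\xrightarrow{\ c\ }
\uD(FX,Y),
\]
where $c$ is the enriched composition morphism of $\uD$ described in \Cref{underline} (with $A=FX$, $B=GX$, $C=Y$) and the first isomorphism is the right unit constraint of $\VV$. This is exactly enriched precomposition with $\mu_X$, and it is defined for all $X\in\CC$ and $Y\in\DD$ without any invertibility hypothesis.

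For the second assertion, suppose $\mu$ is an isomorphism, so each $\mu_X$ is an isomorphism in $\DD$ with inverse $\mu_X^{-1}\colon GX\to FX$. Its name $\underline{\mu^{-1}}_X\colon 1\to\uD(GX,FX)$ yields, by the same recipe, a precomposition map $(\mu_X^{-1})^{\ast}\colon\uD(FX,Y)\to\uD(GX,Y)$. I would then show that $\mu_X^{\ast}$ and $(\mu_X^{-1})^{\ast}$ are mutually inverse. This reduces to the contravariant functoriality of enriched precomposition, namely $(g\circ f)^{\ast}=f^{\ast}\circ g^{\ast}$ and $\mathrm{id}^{\ast}=\mathrm{id}$; combined with $\mu_X\circ\mu_X^{-1}=\mathrm{id}_{GX}$ and $\mu_X^{-1}\circ\mu_X=\mathrm{id}_{FX}$ these identities give $(\mu_X^{-1})^{\ast}\circ\mu_X^{\ast}=\mathrm{id}_{\uD(GX,Y)}$ and $\mu_X^{\ast}\circ(\mu_X^{-1})^{\ast}=\mathrm{id}_{\uD(FX,Y)}$.

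The only real work, and the step I expect to be the main obstacle, is verifying this functoriality inside $\uD$. It is a diagram chase: one writes $(\mu_X^{-1})^{\ast}\circ\mu_X^{\ast}$ as a single composite and rewrites it using the associativity coherence of $c$ so that the two names $\underline{\mu}_X$ and $\underline{\mu^{-1}}_X$ are fed into $c$ together, then recognizes the resulting pairing as the name of $\mu_X\circ\mu_X^{-1}=\mathrm{id}_{GX}$ and collapses it via the unit axiom of $\uD$. All of this is formal and uses only the monoidal-over-$\VV$ coherences already recorded in \Cref{underline}; there are no sign or grading subtleties here, since we stay at the level of the abstract enriched category.
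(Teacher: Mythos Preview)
Your proposal is correct and follows essentially the same approach as the paper: both define the map as enriched precomposition with $\underline{\mu}_X$ via the composite $\uD(GX,Y)\otimes 1\xrightarrow{1\otimes\underline{\mu}_X}\uD(GX,Y)\otimes\uD(FX,GX)\xrightarrow{c}\uD(FX,Y)$, construct the candidate inverse from $\underline{\mu^{-1}}_X$, and verify mutual invertibility by a diagram chase using associativity of $c$ together with the fact that composing the names of $\mu_X$ and $\mu_X^{-1}$ yields the name of the identity. The paper spells out this chase as an explicit pentagon-style diagram, whereas you package it as contravariant functoriality of enriched precomposition, but the content is identical.
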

\begin{proof}
By \Cref{enrichedtrans} there is a $\VV$-enriched natural transformation 
\[\underline{\mu}:\underline{F}\to\underline{G}\]
that at each object $X$ evaluates to \[\underline{\mu}_X:1\to\uD(FX,GX)\] defined to be the adjoint of $\mu_X:FX\to GX$. The map $\uD(GX,Y)\to\uD(FX,Y)$ is defined as the composite

\begin{equation}\label{enrichedmap}
\uD(GX,Y)\cong\uD(GX,Y)\otimes 1\xrightarrow{1\otimes\umu_X}\uD(GX,Y)\otimes\uD(FX,GX)\xrightarrow{c}\uD(FX,Y),
\end{equation}
where $c$ is the composition map in the enriched setting. 

When $\mu$ is an isomorphism we may analogously define the following map

\begin{equation}\label{enrichedmapinverse}
\uD(FX,Y)\cong\uD(FX,Y)\otimes 1\xrightarrow{1\otimes\umui_X}\uD(FX,Y)\otimes\uD(GX,FX)\xrightarrow{c}\uD(GX,Y).
\end{equation}

We show that this map is the inverse of the map in \Cref{enrichedmap}.

\begin{equation}\label{complicateddiagram}
\begin{tikzcd}[column sep = 0pt, row sep = 20pt]
{\uD(GX,Y) } \arrow[r, "\cong"] \arrow[rd, "(5)", phantom, bend left = 7]                 & {\uD(GX,Y)\otimes 1} \arrow[r, "1\otimes\umu_X"] \arrow[d, "1\otimes\alpha_X"] \arrow[rd, "(4)", phantom] & {\uD(GX,Y)\otimes\uD(FX,GX)} \arrow[r, "c"] \arrow[d, "\cong"]                                                                     & {\uD(FX,Y)} \arrow[ddd, "\cong"]                                               \\
                                                                                     & {\uD(GX,Y)\otimes\uD(GX,GX)} \arrow[lu, "c"] \arrow[lu]                                                  & {\uD(GX,Y)\otimes\uD(FX,GX)\otimes 1} \arrow[ld, "1\otimes 1\otimes \umui_X"] \arrow[rdd, "c\otimes 1"] \arrow[ru, "(1)"', phantom] &                                                                                \\
                                                                                     & {\uD(GX,Y)\otimes\uD(FX,GX)\otimes \uD(GX,FX)} \arrow[u, "1\otimes c"] \arrow[ld, "c\otimes 1"]          &                                                                                                                                    &                                                                                \\
{\uD(FX,Y)\otimes\uD(GX,FX)} \arrow[uuu, "c"] \arrow[ruu, "(3)"', phantom, bend left] & {}                                                                                                       &                                                                                                                                    & {\uD(FX,Y)\otimes 1} \arrow[lll, "1\otimes\umui_X"] \arrow[llu, "(2)"', phantom]
\end{tikzcd}
\end{equation}

In the above diagram (\ref{complicateddiagram}), $\alpha_X$ is adjoint to $1_{GX}:GX\to GX$. Diagrams (1) and (2) clearly commute. Diagram (3) commutes by associativity of $c$. Diagram (4) commutes because $\umui_X$ and $\umu_X$ are adjoint to mutual inverses, so their composition results in the adjoint of the identity. Finally, diagram (5) commutes because we are composing with an isomorphism. In particular, diagram (5) is a decomposition of the identity map on $\uD(GX,Y)$. By commutativity, this means that the overall diagram composes to the identity, showing that the maps (\ref{enrichedmap}) and (\ref{enrichedmapinverse}) are mutually inverse.
\end{proof}

\begin{remark}\label{4.15}
The category $\fc$ is monoidal over $\vbc$. By restriction, $\fmod$ is monoidal over $\bgmod$.
\end{remark}

Next, we define some more essentials for our work with enriched categories. 
\begin{defin}\label{weirdenrichment}
Let $A,B,C$ be bigraded modules. We denote by $\underline{\mathpzc{bgMod}_R}(A,B)$ the bigraded module given by
\[\underline{\mathpzc{bgMod}_R}(A,B)^v_u :=\prod_{j\geq0}[A,B]^{v-j}_{u+j}\]
where $[A,B]$ is the internal hom. More precisely, $g \in\underline{\mathpzc{bgMod}_R}(A,B)^v_u$ is given
by the sequence $g := (g_0, g_1, g_2, \dots )$, where $g_j : A \longrightarrow B$ is a map of bigraded modules of bidegree $(u + j, v - j)$.
Moreover, we define a composition morphism
\[c : \underline{\mathpzc{bgMod}_R}(B,C) \otimes \underline{\mathpzc{bgMod}_R}(A,B) \longrightarrow \underline{\mathpzc{bgMod}_R}(A,C)\,\,\,\mbox{by}\,\,\,
c(f, g)_m :=\sum_{i+j=m}(-1)^{i|g|}f_ig_j .\]
\end{defin}

\begin{defin}\label{delta2}
Let $(A, d^A_i), (B, d^B_i)$ be twisted complexes, $f \in\underline{\mathpzc{bgMod}_R}(A,B)^v_u$ and consider $d^A :=(d^A_i)_i \in\underline{\mathpzc{bgMod}_R}(A,A)^1_0$
and $d^B := (d^B_i)_i \in\underline{\mathpzc{bgMod}_R}(B,B)^1_0$. We define
\[\delta (f) := c(d^B, f) - (-1)^{\langle f,d^A\rangle}c(f, d^A) \in\underline{\mathpzc{bgMod}_R}(A,B)^{v+1}_u.\]

More precisely,
\[(\delta (f))_m :=\sum_{i+j=m}(-1)^{i|f|}d^B_if_j - (-1)^{v+i}f_id^A_j.\]
\end{defin}

The following lemma justifies the above definition. 

\begin{lem}
The following equations hold.
\begin{align*}
c(d^A, d^A) = 0, \,\,\,\,\,
\delta ^2 = 0, \,\,\,\,\,\mbox{and}\,\,\,\,\,
\delta (c(f, g)) = c(\delta (f), g) + (-1)^v c(f, \delta (g))
\end{align*}
where $v$ is the vertical degree of $f$. Furthermore, $f \in\ubgMod(A,B)$ is a map of twisted complexes if and
only if $\delta (f) = 0$. In particular, $f$ is a morphism in $\tc$ if and only if the bidegree of $f$ is $(0, 0)$ and
$\delta (f) = 0$. Moreover, for $f$, $g$ morphisms in $\tc$, we have that $c(f, g) = f\circ g$, where the latter denotes
composition in $\tc$.
\end{lem}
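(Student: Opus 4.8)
The plan is to reduce the three displayed identities and the two concluding assertions to a single structural fact — that the composition morphism $c$ of \Cref{weirdenrichment} is \emph{strictly} associative once its Koszul sign $(-1)^{i|g|}$ is accounted for — together with a direct unwinding of the componentwise formulas.

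First I would dispose of $c(d^A,d^A)=0$ and the two final statements, as these are immediate. Since $d^A\in\ubgMod(A,A)^1_0$ has total degree $|d^A|=1$, the defining formula gives $c(d^A,d^A)_m=\sum_{i+j=m}(-1)^{i}d^A_id^A_j$, which is precisely the left-hand side of the structure equation in \Cref{twistedcomplex}; hence it vanishes for all $m\geq 0$, and the same computation yields $c(d^B,d^B)=0$. For the characterization of morphisms I would first note that $\langle f,d^A\rangle=v$ when $f$ has vertical degree $v$, since $d^A$ has bidegree $(0,1)$, so that $\delta$ agrees with its explicit componentwise description. Then the equation $(\delta f)_m=0$ reads $\sum_{i+j=m}(-1)^{i|f|}d^B_if_j=\sum_{i+j=m}(-1)^{v+i}f_id^A_j$, which is the defining relation for a map of twisted complexes; imposing in addition bidegree $(0,0)$, so that $|f|=v=0$, recovers verbatim the condition of \Cref{twistedmorphisms} and hence the property of being a morphism of $\tc$. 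Finally, for $f,g$ morphisms in $\tc$ we have $|g|=0$, so $c(f,g)_m=\sum_{i+j=m}f_ig_j=(f\circ g)_m$, giving $c(f,g)=f\circ g$.

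The crux is the associativity $c(c(f,g),h)=c(f,c(g,h))$, which I expect to be the main obstacle, since this is the one place where the sign conventions must be handled with care. I would verify it termwise: expanding the left-hand side and using $|c(f,g)|=|f|+|g|$ produces the coefficient $(-1)^{(i+j)|h|+i|g|}$ in front of $f_ig_jh_k$, while expanding the right-hand side and using $|c(g,h)|=|g|+|h|$ produces $(-1)^{i(|g|+|h|)+j|h|}$; these exponents agree modulo $2$, so the two bracketings coincide term by term and $c$ is strictly associative. This is exactly the property that the sign $(-1)^{i|g|}$ in \Cref{weirdenrichment} is designed to guarantee.

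With associativity established, the remaining two identities follow formally. For the Leibniz rule I would expand $\delta(c(f,g))$, $c(\delta f,g)$ and $(-1)^vc(f,\delta g)$ using $\delta(h)=c(d,h)-(-1)^{(\text{vertical degree of }h)}c(h,d)$ with the appropriate differentials $d^A,d^B,d^C$, reassociate each triple product, and observe that the two inner terms $\pm(-1)^vc(f,c(d^B,g))$ cancel, leaving exactly $c(d^C,c(f,g))-(-1)^{v+v_g}c(c(f,g),d^A)=\delta(c(f,g))$. For $\delta^2=0$ I would expand $\delta(\delta f)=c(d^B,\delta f)-(-1)^{v+1}c(\delta f,d^A)$, reassociate, and apply the vanishing $c(d^A,d^A)=c(d^B,d^B)=0$; the surviving terms are $\mp(-1)^vc(c(d^B,f),d^A)$, which cancel. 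Equivalently, $\delta^2=0$ is the formal consequence of the Leibniz rule applied to $\delta$ viewed as the graded commutator with $d$. Beyond the sign bookkeeping already dispatched in the associativity step, I anticipate no further difficulty.
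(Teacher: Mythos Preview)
Your proposal is correct. The paper itself does not supply a proof of this lemma; it simply cites \cite[Lemma 4.18]{whitehouse}. Your argument provides a self-contained verification: the reduction to strict associativity of $c$ is exactly the right structural observation, the sign check $(i+j)|h|+i|g|\equiv i(|g|+|h|)+j|h|$ is the only delicate point and you have it right, and the Leibniz rule and $\delta^2=0$ then follow formally as you describe. The componentwise identifications with \Cref{twistedcomplex} and \Cref{twistedmorphisms} are also correct, since $|d^A|=1$ and $\langle f,d^A\rangle=v$ make the explicit formula for $\delta$ match the stated one.
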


\begin{defin}
For $A,B$ twisted complexes, we define $\underline{t\mathcal{C}_R}(A,B)$ to be the vertical bicomplex
$\underline{t\mathcal{C}_R}(A,B) := (\underline{\mathpzc{bgMod}_R}(A,B), \delta )$.
\end{defin}

\begin{defin}\label{ubgMod}
We denote by $\ubgMod$ the \emph{$\bgmod$-enriched category of bigraded modules} given
by the following data.

\begin{itemize}
\item The objects of $\ubgMod$ are bigraded modules.
\item For $A,B$ bigraded modules the hom-object is the bigraded module $\ubgMod(A,B)$.
\item The composition morphism $c:\ubgMod(B,C) \otimes \ubgMod(A,B) \longrightarrow \ubgMod(A,C)$ is given by \Cref{weirdenrichment}.
\item The unit morphism $R \longrightarrow \ubgMod(A,A)$ is given by the morphism of bigraded modules that
sends $1 \in R$ to $1_A : A \longrightarrow A$, the strict morphism given by the identity of $A$.
\end{itemize}
\end{defin}

\begin{defin}\label{utC}
The \emph{$\vbc$-enriched category of twisted complexes} $\utC$ is the enriched category given by the following data.

\begin{itemize}
\item The objects of $\utC$ are twisted complexes.
\item For $A,B$ twisted complexes the hom-object is the vertical bicomplex $\utC(A,B)$.
\item The composition morphism $c : \utC(B,C)\otimes\utC(A,B) \longrightarrow \utC(A,C)$ is given by \Cref{weirdenrichment}.
\item The unit morphism $R \longrightarrow \utC(A,A)$ is given by the morphism of vertical bicomplexes sending
$1 \in R$ to $1_A : A \longrightarrow A$, the strict morphism of twisted complexes given by the identity of $A$.
\end{itemize}
\end{defin}

The next tensor corresponds to $\underline{\otimes}$ in the categorical setting of \Cref{underline}.

\begin{lem}\label{tensorenriched}
The monoidal structure of $\utC$ is given by the following map of vertical bicomplexes.
\[\underline{\otimes}: \utC(A,B) \times \utC(A',B') \longrightarrow \utC(A \otimes A',B \otimes B'),\,\,\,(f, g) \mapsto (f\underline{\otimes}g)_m :=\sum_{i+j=m}(-1)^{ij}f_i \otimes g_j.\]
The monoidal structure of $\ubgMod$ is given by the restriction of this map.
\end{lem}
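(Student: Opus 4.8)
The plan is to identify the abstract enriched product with the explicit formula. By \Cref{4.15} the category $\fc$ is monoidal over $\vbc$, and since $\Tot$ restricts to an isomorphism of categories onto its image in $\fc$, this transports to make $\tc$ monoidal over $\vbc$; the construction of \Cref{underline} (together with \Cref{enrichedtrans}) then endows $\utC$ with its monoidal $\vbc$-enriched structure $\underline{\otimes}$, whose effect on hom-objects is, by definition, the adjoint of the composite
\[
(\utC(A,B)\otimes\utC(A',B'))*(A\otimes A')\xrightarrow{\cong}(\utC(A,B)*A)\otimes(\utC(A',B')*A')\xrightarrow{ev_{AB}\otimes ev_{A'B'}}B\otimes B'.
\]
Thus it suffices to compute this adjoint and check that it equals $(f,g)\mapsto\bigl((f\underline{\otimes}g)_m\bigr)_m$ with $(f\underline{\otimes}g)_m=\sum_{i+j=m}(-1)^{ij}f_i\otimes g_j$. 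Because the whole construction lives in $\vbc$, the resulting map is automatically a morphism of vertical bicomplexes (hence compatible with $\delta$) and is automatically associative, unital and compatible with the composition $c$ of \Cref{weirdenrichment}; so the only genuine content is the explicit component formula, and in particular the sign $(-1)^{ij}$.

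First I would make explicit the three ingredients for twisted complexes. Via the full embedding $\vbc\hookrightarrow\tc$ sending a vertical bicomplex to the twisted complex concentrated in $d_0$, the external product $*\colon\vbc\times\tc\to\tc$ is identified with the restriction of the tensor product of \Cref{tensortwisted}. Under the adjunction $\Hom_\tc(C*X,Y)\cong\Hom_\vbc(C,\utC(X,Y))$, a map $\phi$ corresponds to the morphism of twisted complexes with components $\hat\phi_m(c\otimes x)=\pm\,\phi(c)_m(x)$; taking $\phi=\mathrm{id}_{\utC(X,Y)}$ then gives the evaluation $ev_{XY}\colon\utC(X,Y)*X\to Y$ with components $(ev_{XY})_m(f\otimes x)=\pm\,f_m(x)$, the sign being forced by \Cref{di} and the Koszul conventions of \Cref{tensortwisted}.

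Next I would feed a generator $(f\otimes g)*(a\otimes a')$ through the composite. The distributivity isomorphism $C*(X\otimes Y)\cong X\otimes(C*Y)$ used to separate the two factors commutes the factor $\utC(A',B')$ (carrying $g$) past the argument $A$ (carrying $a$), contributing a Koszul sign governed by the bidegrees of the components $g_j$; applying $ev_{AB}\otimes ev_{A'B'}$ then extracts $\pm f_i(a)\otimes\pm g_j(a')$ together with the Koszul sign of a tensor of two maps. Taking the adjoint re-expresses the output as a twisted-complex-hom sequence $A\otimes A'\to B\otimes B'$, and collecting the contributions indexed by $i+j=m$ is exactly where the calculation must be done carefully: the vertical degrees $-i$ and $-j$ of $f_i$ and $g_j$ combine, through the commutation and the evaluation signs, into precisely $(-1)^{(-i)(-j)}=(-1)^{ij}$, on top of the Koszul sign already carried by $f_i\otimes g_j$. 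This sign bookkeeping is the main obstacle. As a cross-check I would transport the computation through totalization, where $\utC(A,B)$ corresponds to $\underline{\Hom}(\Tot A,\Tot B)$ and $\underline{\otimes}$ to composition of filtered homs with the structure maps $\mu$ of \Cref{monoidal}, and verify that the explicit signs $(-1)^{k_1n_2}$ appearing in $\mu$ reassemble to the same $(-1)^{ij}$.

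Finally, for the statement about $\ubgMod$: the $\bgmod$-enriched category of bigraded modules arises from the identical construction after forgetting vertical differentials (equivalently, restricting along $\bgmod\hookrightarrow\vbc$), and since the formula $(f\underline{\otimes}g)_m=\sum_{i+j=m}(-1)^{ij}f_i\otimes g_j$ involves no differentials, the same computation applies verbatim with the $\delta$-compatibility step simply omitted. Hence the monoidal structure of $\ubgMod$ is the restriction of $\underline{\otimes}$, as claimed.
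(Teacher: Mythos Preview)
The paper does not actually prove this lemma: it is stated with a reference to \cite[Lemma 4.27]{whitehouse} and no argument is given in the present text. So there is no in-paper proof to compare against, and your proposal is being measured against the bare statement and the surrounding definitions.

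Your overall strategy---recover the explicit formula by unwinding the abstract adjoint description of $\underline{\otimes}$ from \Cref{underline}---is a legitimate route, and you correctly isolate the sign $(-1)^{ij}$ as the only nontrivial content. Two comments. First, your opening sentence derives the $\vbc$-enrichment of $\tc$ by transporting along $\Tot$ from $\fc$; this is unnecessary and somewhat circular, since in this paper $\utC$ is defined directly in \Cref{utC} via the hom-objects $\ubgMod(A,B)$ with the differential $\delta$ of \Cref{delta2}, not by pulling back from $\ufC$. You can (and should) work intrinsically in $\tc$: the external product $*:\vbc\times\tc\to\tc$ is just the tensor of \Cref{tensortwisted} restricted along $\vbc\hookrightarrow\tc$, and the adjunction with $\utC(-,-)$ is available without ever mentioning $\Tot$. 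Your proposed cross-check through $\mu$ is fine as a sanity check, but it should not be the logical backbone. Second, your proposal remains a plan rather than a proof: the phrases ``the sign being forced by \Cref{di}'' and ``combine \ldots into precisely $(-1)^{ij}$'' are exactly where the work lives, and you have not actually carried it out. To complete the argument you must write down $(ev_{XY})_m$ explicitly, push $(f\otimes g)\otimes(a\otimes a')$ through the symmetry isomorphism of \Cref{tensortwisted} and the Koszul rule, and exhibit the cancellation that leaves only $(-1)^{ij}$. Until that computation is on the page, the proposal is an outline, not a proof.
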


\begin{defin}\label{ufMod}
The \emph{$\bgmod$-enriched category of filtered modules} $\ufMod$ is the enriched category given by the following data.

\begin{itemize}
\item The objects of $\ufMod$ are filtered modules.
\item For filtered modules $(K, F)$ and $(L, F)$, the bigraded module $\ufMod(K,L)$ is given by
\[\ufMod(K,L)^v_u :=\{f : K \longrightarrow L\mid f(F_qK^m) \subset F_{q+u}L^{m+u+v}, \forall m, q \in\Z\}.\]
\item The composition morphism is given by $c(f, g) = (-1)^{u|g|}fg$, where $f$ has bidegree $(u, v)$.
\item The unit morphism is given by the map $R \longrightarrow \ufMod(K,K)$ given by $1 \longrightarrow 1_K$.
\end{itemize}
\end{defin}

\begin{defin}\label{fmoddifferential}
Let $(K, d^K, F)$ and $(L, d^L, F)$ be filtered complexes. We define $\ufC(K,L)$ to be the
vertical bicomplex whose underlying bigraded module is $\ufMod(K,L)$ with vertical differential
\[\delta (f) := c(d^L, f) - (-1)^{\langle f,d^K\rangle}c(f, d^K) = d^Lf - (-1)^{v+u}fd^K = d^Lf - (-1)^{|f|}fd^K\]
for $f \in\ufMod(K,L)^v_u$, where $c$ is the composition map from \Cref{ufMod}.
\end{defin}

\begin{defin}\label{ufC}
The \emph{$\vbc$-enriched category of filtered complexes} $\ufC$ is the enriched category given
by the following data.
\begin{itemize}
\item The objects of $\ufC$ are filtered complexes.
\item For $K,L$ filtered complexes the hom-object is the vertical bicomplex $\ufC(K,L)$.
\item The composition morphism is given as in $\ufMod$ in \Cref{ufMod}. 
\item The unit morphism is given by the map $R \longrightarrow \ufC(K,K)$ given by $1 \longrightarrow 1_K$.
We denote by $\usfC$ the full subcategory of $\ufC$ whose objects are split filtered complexes.

\end{itemize}
\end{defin}

The enriched monoidal structure is given as follows.
\begin{defin}\label{tensorenriched2}
The monoidal structure of $\ufC$ is given by the following map of vertical bicomplexes.
\[\underline{\otimes}: \ufC(K,L) \otimes \ufC(K',L') \longrightarrow \ufC(K \otimes K',L \otimes L'),\]
\[(f, g) \mapsto f\underline{\otimes}g := (-1)^{u|g|}f \otimes g\]
Here, $u$ is the horizontal degree of $f$.
\end{defin}

\begin{lem}\label{adjunction}
Let $A$ be a vertical bicomplex that is horizontally bounded on the right and let $K$ and $L$ be filtered complexes. There is a natural bijection
\[\Hom_{\fc}(\Tot(A)\otimes K,L)\cong \Hom_{\vbc}(A,\ufC(K,L))\]
given by $f\mapsto \tilde{f}: a\mapsto (k\mapsto f(a\otimes k))$.
\end{lem}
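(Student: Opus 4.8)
The plan is to construct the bijection explicitly via the adjunction $f \mapsto \tilde f$ and then verify all the compatibility conditions, leaning on the lax monoidal structure of $\Tot$ from \Cref{monoidal} to translate between the filtered and bigraded worlds. First I would take $f \in \Hom_{\fc}(\Tot(A)\otimes K, L)$ and define $\tilde f(a)\colon K \to L$ by $\tilde f(a)(k) = f(a\otimes k)$ for $a \in A$ and $k \in K$. Since $A$ is bigraded, each homogeneous $a$ of bidegree $(u,v)$ sits inside $\Tot(A)$ as a single-component element, and I would check that $\tilde f(a)$ lands in $\ufC(K,L)^v_u$ as prescribed by \Cref{ufC} and \Cref{ufMod}; that is, I must verify that $\tilde f(a)$ raises total degree by $u+v$ and shifts the filtration by $u$. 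Here the horizontal boundedness of $A$ matters: it ensures (via the bounded case of \Cref{monoidal}) that $\Tot(A)\otimes K \cong \Tot(A\otimes K)$ strongly, so that $f$ genuinely sees each homogeneous component of $a$ separately and $\tilde f(a)$ is well-defined without convergence issues.

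Next I would check that the assignment $a \mapsto \tilde f(a)$ is itself a morphism of vertical bicomplexes $A \to \ufC(K,L)$, i.e.\ that it is a bidegree-$(0,0)$ map commuting with the vertical differentials. Commutation with the differentials is the crux: on the source side the vertical differential of $A$ (as a vertical bicomplex) must be matched against the $\delta$ defined in \Cref{fmoddifferential}, and this is where the compatibility of $f$ with the filtered-complex differentials $d^{\Tot(A)\otimes K}$ and $d^L$ gets used. I would unwind $\delta(\tilde f(a)) = d^L \tilde f(a) - (-1)^{|a|}\tilde f(a) d^K$ and compare it, component by component, with $\widetilde{(\ldots)}$ applied to the differential of $a$, using the explicit formula for the totalization differential $d$ and the Koszul signs from the bigraded conventions. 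The inverse map sends $g \in \Hom_{\vbc}(A, \ufC(K,L))$ back to the morphism $a \otimes k \mapsto g(a)(k)$, extended to all of $\Tot(A)\otimes K$ by linearity and continuity; I would verify this is filtration-compatible and a chain map, and that the two assignments are mutually inverse, which is essentially formal once the degree bookkeeping is set up.

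The main obstacle I expect is the sign and filtration bookkeeping in showing that $\tilde f(a)$ genuinely lies in the enriched hom-object with the correct bidegree $(u,v)$ and, more delicately, that the vertical-bicomplex differential on $A$ transports correctly to $\delta$ under the assignment. The subtlety is that the differential on $\Tot(A)$ encodes the whole family $\{d_m\}$ of twisted-complex maps with signs $(-1)^{mn}$, whereas $A$ as a mere vertical bicomplex carries only $d_0$; so one must be careful about exactly which structure is being compared, and the boundedness hypothesis is precisely what lets the higher $d_m$ contributions be reorganized through the strong monoidal isomorphism $\mu$ of \Cref{monoidal} without losing track of signs. Since the statement asserts only the bijection of morphism sets (not an enriched-functor compatibility), I would keep the verification at the level of the two underlying categories and defer any naturality in $K$, $L$ to a direct check that the bijection commutes with post- and pre-composition, which follows from the associativity and unit axioms recorded after \Cref{underline}.
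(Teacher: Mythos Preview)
Your overall approach---direct verification that $f \mapsto \tilde f$ is a bijection with the stated inverse---is sound and is essentially what the paper defers to (it gives no argument of its own, citing \cite[Lemma 4.35]{whitehouse}). However, two of your explanatory remarks are off and would mislead you in the actual computation.

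First, the isomorphism $\Tot(A)\otimes K \cong \Tot(A\otimes K)$ you invoke does not make sense as stated: $K$ is a filtered complex, not a bigraded module or twisted complex, so $A\otimes K$ is not an object to which $\Tot$ applies, and \Cref{monoidal} is not the relevant input here. What horizontal boundedness of $A$ actually buys you is simpler: it makes $\Tot(A)^n = \bigoplus_i A^{n-i}_i$ a finite direct sum, so that the inverse map $g \mapsto \bigl((x_i)_i\otimes k \mapsto \sum_i g(x_i)(k)\bigr)$ is a finite sum and hence well-defined. The forward direction $f \mapsto \tilde f$ needs no boundedness at all.

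Second, your worry about ``the whole family $\{d_m\}$'' on $\Tot(A)$ is unfounded: $A$ is only a vertical bicomplex, so the only nonzero component is $d_0 = d^A$, and the totalized differential is simply $d^A$ applied componentwise with no extra sign (since $m=0$). The check that $\tilde f$ intertwines $d^A$ with the $\delta$ of \Cref{fmoddifferential} is then a direct unwinding using that $f$ is a chain map for $d^{\Tot(A)\otimes K} = d^A\otimes 1 + 1\otimes d^K$ and $d^L$; there are no higher contributions to reorganize, and boundedness plays no role in this step.
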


We now define an enriched version of the totalization functor. 
\begin{defin}\label{enrichedtot}
Let $A,B$ be bigraded modules and $f \in\ubgMod (A,B)^v_u$ we define

\[\Tot(f) \in\ufMod(\Tot(A),\Tot(B))^v_u\]
to be given on any $x \in\Tot(A)^n$ by
\[(\Tot(f)(x)))_{j+u} :=
\sum_{m\geq0}(-1)^{(m+u)n}f_m(x_{j-m}) \in B^{n-j+v}_{j+u} \subset \Tot(B)^{n+u+v}.\]
Let $K = \Tot(A)$, $L = \Tot(B)$ and $g \in\ufMod(K,L)^v_u$. We define
\[f := \Tot^{-1}(g) \in\ubgMod(A,B)^v_u\]
to be $f := (f_0, f_1,\dots)$ where $f_i$ is given on each $A^{m+j}_j$ by the composite
\begin{align*}
f_i : A^{m-j}_j \hookrightarrow\prod_{k\geq j}A^{m-k}_k &= F_j(\Tot(A)^m)\xrightarrow{g}F_{j+u}(\Tot(B)^{m+u+v})=\prod_{l\geq j+u}B^{m+u+v-l}_l\xrightarrow{\times(-1)^{(i+u)m}} B^{m-j+v-i}_{j+u+i} ,
\end{align*}
where the last map is a projection and multiplication with the indicated sign.
\end{defin} 

\begin{thm}[Cirici-Egas Santander-Livernet-Whitehouse]\label{4.39}
Let $A$, $B$ be twisted complexes. The assignments $\mathfrak{Tot}(A) := \Tot(A)$ and
\[
\mathfrak{Tot}_{A,B} : \utC(A,B) \longrightarrow \ufC(\Tot(A),\Tot(B)),\,  f \mapsto \Tot(f)
\]
define a $\vbc$-enriched functor $\mathfrak{Tot} : \utC \longrightarrow \ufC$ which restricts to an isomorphism onto its image. Furthermore, this functor restricts to a $\bgmod$-enriched functor \[\mathfrak{Tot} : \ubgMod \longrightarrow \ufMod\]
 which also restricts to an isomorphism onto its image.
\end{thm}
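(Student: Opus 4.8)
The plan is to verify directly that the explicit formulas of \Cref{enrichedtot} assemble into a $\vbc$-enriched functor, taking the already-established non-enriched isomorphism $\Tot:\tc\to\fc$ onto its image as the skeleton and upgrading it one structure at a time. Throughout, the underlying object assignment $\mathfrak{Tot}(A)=\Tot(A)$ and the bijection on objects are inherited from that non-enriched statement, so the work is entirely at the level of the hom-objects.

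First I would check that $\mathfrak{Tot}_{A,B}$ is a well-defined morphism of vertical bicomplexes. The bidegree claim---that $f\in\ubgMod(A,B)^v_u$ yields $\Tot(f)\in\ufMod(\Tot A,\Tot B)^v_u$---is read off the defining formula $(\Tot(f)(x))_{j+u}=\sum_{m\ge 0}(-1)^{(m+u)n}f_m(x_{j-m})$: a filtration-$q$ element $x$ (so $x_i=0$ for $i<q$) has its $(j+u)$-component involving only the $x_{j-m}$ with $j-m\ge q$, hence $\Tot(f)(x)$ is supported in horizontal degrees $\ge q+u$, which is exactly the filtration-shift condition defining $\ufMod(\Tot A,\Tot B)^v_u$ in \Cref{ufMod}. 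Since this formula never mentions differentials, it simultaneously establishes the $\bgmod$-enriched assignment $\mathfrak{Tot}:\ubgMod\to\ufMod$ at the level of bigraded hom-objects.

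Next I would prove that $\mathfrak{Tot}_{A,B}$ intertwines the two vertical differentials, i.e. $\Tot(\delta f)=\delta(\Tot f)$, where the left-hand $\delta$ is the twisted-complex differential of \Cref{delta2} and the right-hand $\delta$ is the filtered-complex differential $d^L(-)-(-1)^{|f|}(-)d^K$ of \Cref{fmoddifferential}. One expands both sides on $x\in\Tot(A)^n$ using the relation $d(x)_j=\sum_m(-1)^{mn}d_m(x_{j-m})$ between the totalized differential and the family $d_m$; collecting the terms indexed by $i+j=m$, the two sign systems $(-1)^{i|f|}$ and $(-1)^{(m+u)n}$ combine to reproduce the filtered-side signs. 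This promotes each hom-object map to a morphism in $\vbc$. I would then verify enriched functoriality: that $\Tot$ carries the composition $c(f,g)_m=\sum_{i+j=m}(-1)^{i|g|}f_ig_j$ of \Cref{weirdenrichment} to the composition $c(f,g)=(-1)^{u|g|}fg$ of \Cref{ufMod}, and the strict identity to $1_{\Tot A}$. The unit statement is immediate since $(1_A)_0=\mathrm{id}$ and $(1_A)_m=0$ for $m>0$; for composition one evaluates $\Tot(c(f,g))$ and $c(\Tot f,\Tot g)$ on $x\in\Tot(A)$ and matches signs.

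Finally, invertibility onto the image comes from the explicit $\Tot^{-1}$ of \Cref{enrichedtot}: composing the two formulas, the totalization signs---both of the form $(-1)^{(\mathrm{index}+u)\cdot(\mathrm{total\ degree})}$---cancel, giving $\Tot^{-1}\circ\Tot=\mathrm{id}$ and $\Tot\circ\Tot^{-1}=\mathrm{id}$ on each hom-object, so every $\mathfrak{Tot}_{A,B}$ is an isomorphism of vertical bicomplexes onto its image and $\mathfrak{Tot}$ restricts to an isomorphism of enriched categories onto its image; the same inverse with differentials forgotten handles the $\bgmod$-enriched restriction. I expect the main obstacle to be the composition compatibility: the totalization sign $(-1)^{(m+u)n}$ depends on the total degree $n$ of the argument, which changes once $\Tot(g)$ has been applied, and it is precisely this argument-dependent shift that reconciles the internal composition sign $(-1)^{i|g|}$ with the external sign $(-1)^{u|g|}$. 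Once that pattern is isolated, the differential compatibility and the inverse computation are comparatively mechanical.
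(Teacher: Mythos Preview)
The paper does not supply its own proof of this theorem: it is stated as ``The following is \cite[Theorem 4.39]{whitehouse}'' and no argument is given. Your proposal is a direct verification from the explicit formulas in \Cref{enrichedtot}, which is presumably what the cited reference does as well; the outline is sound, the identified checkpoints (bidegree and filtration shift, differential compatibility, composition and unit, explicit inverse) are exactly the right ones, and your diagnosis that the composition compatibility is where the argument-dependent sign $(-1)^{(m+u)n}$ does the real work is accurate. There is nothing to compare against in the present paper beyond noting that your approach is the expected one for a result quoted from \cite{whitehouse}.
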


We now define an enriched endomorphism operad.
\begin{defin}
Let $\underline{\mathscr{C}}$ be a monoidal $\mathscr{V}$-enriched category and $A$ an object of $\uC$. We define $\uEnd_A$
to be the collection in $\mathscr{V}$ given by
\[\uEnd_A(n) \coloneqq \uC (A^{\otimes n},A) \text{ for }n \geq 1.\]
\end{defin}

\begin{propo}\label{S4}\
\begin{itemize}
\item The enriched functors 
\[\mathfrak{Tot} : \ubgMod  \longrightarrow \ufMod ,\hspace{1cm} \mathfrak{Tot} : \utC \longrightarrow \ufC\]
are lax symmetric monoidal in the enriched sense and when restricted to the bounded case they are strong symmetric monoidal in the enriched sense.
\item For $A\in\uC$, the collection $\uEnd_A$ defines an operad in $\VV$. 

\item Let $\CC$ and $\DD$ be monoidal categories over $\VV$. Let 
$F : \CC \longrightarrow \DD$ be a lax monoidal functor over $\VV$. Then for any $X \in\CC$ there is an operad morphism
\[\uEnd_X\longrightarrow\uEnd_{F(X)}.\]

\end{itemize}
\end{propo}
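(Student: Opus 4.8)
The plan is to obtain each of the three bullets by combining the corresponding unenriched statement with the bridging results \Cref{enrichedtrans} and \Cref{4.39}. For the first bullet I would first verify that $\Tot$ is a \emph{lax monoidal functor over the base} --- over $\vbc$ in the case $\tc\to\fc$, and over $\bgmod$ in the case $\bgmod\to\fmod$ --- rather than merely a lax monoidal functor of ordinary categories. Concretely this requires producing the comparison $\nu_{\Tot}:C\ast_{\fc}\Tot(A)\Rightarrow\Tot(C\ast_{\tc}A)$ and checking that the structure maps $\epsilon$ and $\mu$ of \Cref{monoidal} are natural transformations over the base. The isomorphism $C\ast_{\fc}\Tot(A)\cong\Tot(C\ast_{\tc}A)$ unwinds directly from the description of the external tensor, and the condition that $\mu$ be a transformation over the base is the identity displayed in the definition of a natural transformation over $\VV$, which I would check on the explicit formula \eqref{mu1}. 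Once $\Tot$ is lax monoidal over the base, the last clause of \Cref{enrichedtrans} immediately gives that its enriched extension $\mathfrak{Tot}$ of \Cref{4.39} is lax monoidal in the enriched sense; in the bounded case $\epsilon$ and $\mu$ are isomorphisms by \Cref{monoidal}, so $\mathfrak{Tot}$ is strong.

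For the second bullet I would equip the collection $\uEnd_A(n)=\uC(A^{\otimes n},A)$ with an operadic composition built from the enriched monoidal structure of $\uC$. Writing $k=k_1+\cdots+k_n$, the structure map $\gamma$ is the composite
\[
\uC(A^{\otimes n},A)\otimes\textstyle\bigotimes_{i=1}^{n}\uC(A^{\otimes k_i},A)\xrightarrow{1\otimes\underline{\otimes}}\uC(A^{\otimes n},A)\otimes\uC(A^{\otimes k},A^{\otimes n})\xrightarrow{c}\uC(A^{\otimes k},A),
\]
where $\underline{\otimes}$ is the iterate of the maps $\uC(X,W)\otimes\uC(Y,Z)\to\uC(X\otimes Y,W\otimes Z)$ from \Cref{underline} and $c$ is the enriched composition; the operadic unit is the unit morphism $R\to\uC(A,A)$. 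The associativity and unitality axioms then follow from associativity and unitality of $c$ together with the coherence of $\underline{\otimes}$, and $\Sigma_n$-equivariance follows from the symmetry isomorphism $\tau$ of $\uC$; all of these are part of the data making $\uC$ a monoidal $\VV$-enriched category as in \Cref{underline}.

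For the third bullet, \Cref{enrichedtrans} applied to the lax monoidal $F$ produces a $\VV$-enriched lax monoidal functor $\underline{F}:\uC\to\uD$, whose action on hom-objects gives $\uC(X^{\otimes n},X)\to\uD(F(X^{\otimes n}),FX)$. Precomposing with the iterated lax structure map $\mu^{(n)}:(FX)^{\otimes n}\to F(X^{\otimes n})$ in the enriched sense --- exactly the construction of the lemma preceding \Cref{4.15} --- lands in $\uD((FX)^{\otimes n},FX)=\uEnd_{FX}(n)$, and the composite defines the candidate components $\uEnd_X(n)\to\uEnd_{FX}(n)$. The main obstacle is verifying that this family is a morphism of operads, that is, that it intertwines the two composition maps $\gamma$ defined in the second bullet. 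This reduces to a single coherence diagram comparing, on the one hand, applying $\underline{F}$ and precomposing once with $\mu^{(k)}$ and, on the other hand, precomposing the inner slots with the $\mu^{(k_i)}$ and the outer slot with $\mu^{(n)}$ before assembling; commutativity of this diagram is precisely the associativity axiom of the lax monoidal structure of $F$ over $\VV$ combined with the enriched lax monoidality of $\underline{F}$ from \Cref{enrichedtrans}. Unitality is immediate because $\underline{F}$ preserves the enriched unit, and equivariance holds because $\underline{F}$, being compatible with the symmetry, intertwines the $\tau$'s used to define the $\Sigma_n$-actions.
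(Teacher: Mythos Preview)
The paper does not give its own proof of this proposition; it records these statements as a compilation of results from \cite{whitehouse} and refers the reader there. Your sketch is correct and is essentially the argument one finds in that reference: showing that $\Tot$ is lax monoidal \emph{over} the base $\vbc$ (resp.\ $\bgmod$) and then invoking \Cref{enrichedtrans} to pass to the enriched setting is exactly how the enriched monoidality of $\mathfrak{Tot}$ is obtained, and your constructions for bullets two and three are the standard ones. One small point worth making explicit in your write-up is that the enriched extension of $\Tot$ produced abstractly by \Cref{enrichedtrans} agrees with the explicit formula for $\mathfrak{Tot}$ in \Cref{enrichedtot}; this identification is straightforward from the adjunction in \Cref{adjunction}, but it is the step linking your abstract argument to the concrete functor named in the statement.
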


The following is an analog of \cite[Lemma 4.54]{whitehouse}, using \Cref{4.39}, \Cref{S4} and \Cref{4.15}.

\begin{lem}\label{inverse}
Let $A$ be a twisted complex. Consider $\uEnd_A(n)=\utC(A^{\otimes n},A)$ and $\uEnd_{\Tot(A)}(n)=\ufC(\Tot(A)^{\otimes n},\Tot(A))$. There is a morphism of operads
\[\uEnd_A \longrightarrow\uEnd_{\Tot(A)},\]
which is an isomorphism of operads if $A$ is bounded. The same holds true if $A$ is just a bigraded module. In that case, we use the enriched operads $\uEnd_A(n)=\ubgMod(A^{\otimes n},A)$ and $\uEnd_{\Tot(A)}(n)=\ufMod(\Tot(A)^{\otimes n},\Tot(A))$. 
\end{lem}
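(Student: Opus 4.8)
The plan is to obtain this morphism as an instance of the construction in \Cref{S4} and then read off the isomorphism statement from an explicit factorization of its arity-wise components. The totalization functor $\Tot \colon \tc \to \fc$ is lax symmetric monoidal over $\vbc$ (this is what underlies the first item of \Cref{S4}), so the third item of \Cref{S4}, applied with $\CC = \tc$, $\DD = \fc$, $\VV = \vbc$, $F = \Tot$ and $X = A$, produces a morphism of operads $\uEnd_A \to \uEnd_{\Tot(A)}$ in $\vbc$. Here $\uEnd_A(n) = \utC(A^{\otimes n}, A)$ and $\uEnd_{\Tot(A)}(n) = \ufC(\Tot(A)^{\otimes n}, \Tot(A))$, exactly as in the statement. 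The bigraded case is identical, using instead the $\bgmod$-enriched functor $\mathfrak{Tot} \colon \ubgMod \to \ufMod$ and the enrichment of \Cref{S4} over $\bgmod$.

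For the isomorphism claim I would unwind the arity-$n$ component of this operad morphism as the composite
\begin{equation*}
\utC(A^{\otimes n}, A) \xrightarrow{\ \mathfrak{Tot}\ } \ufC\bigl(\Tot(A^{\otimes n}), \Tot(A)\bigr) \xrightarrow{\ \mu^{*}\ } \ufC\bigl(\Tot(A)^{\otimes n}, \Tot(A)\bigr),
\end{equation*}
where the first map is the enriched functor of \Cref{4.39} and $\mu^{*}$ is precomposition with the iterated lax monoidal structure map $\mu \colon \Tot(A)^{\otimes n} \to \Tot(A^{\otimes n})$ of \Cref{monoidal}. This is precisely how the operad morphism attached to a lax monoidal functor acts: one applies the enriched functor and then precomposes with the structure map.

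It then suffices to show that each factor is an isomorphism of vertical bicomplexes when $A$ is bounded. The first factor is an isomorphism for every twisted complex by \Cref{4.39}, its inverse on hom-objects being given explicitly by the map $\Tot^{-1}$ of \Cref{enrichedtot}; this step needs no boundedness hypothesis. The second factor is precomposition with $\mu$, hence an isomorphism as soon as $\mu$ is invertible. By \Cref{monoidal}, $\Tot$ restricts to a strong symmetric monoidal functor on the bounded subcategory $\tc^{b}$, so for bounded $A$ the binary structure map, and therefore its iterate $\mu$, is an isomorphism and $\mu^{*}$ is invertible. Composing, the arity-$n$ component is an isomorphism for every $n$, and an operad morphism that is invertible in each arity is an isomorphism of operads. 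The bigraded assertion follows verbatim after replacing $\utC$, $\ufC$ and $\tc^{b}$ by $\ubgMod$, $\ufMod$ and $\bgmod^{b}$.

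The one point that genuinely requires care is the identification of the abstract operad morphism of \Cref{S4} with the explicit two-step composite above, since the invertibility of the whole map is then read off one factor at a time. Once this factorization is justified, the bounded case reduces to two independent and already-available facts: that enriched totalization is an isomorphism onto its image regardless of boundedness, and that it is strong monoidal precisely on the bounded subcategories, so that only the monoidal structure map $\mu$ can fail to be invertible in the unbounded case.
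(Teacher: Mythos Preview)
Your proposal is correct and follows essentially the same approach as the paper. The paper cites \cite[Lemma 4.54]{whitehouse} for the twisted complex case and then runs the bigraded case by invoking \Cref{S4} just as you do; your explicit two-step factorization of the arity-$n$ component as $\mathfrak{Tot}$ followed by precomposition with $\mu$ is exactly the description the paper gives in the subsequent \Cref{composition} when writing down the inverse, so your argument for the isomorphism in the bounded case simply makes explicit what the paper defers to the cited reference.
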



We are going to construct the inverse in the bounded case explicitly from \Cref{enrichedmap}. The construction for the direct map is analogue but here we just need the inverse. We do it for a twisted complex $A$, but it is done similarly for a bigraded module.

\begin{lem}\label{composition}
In the conditions of \Cref{inverse} for the bounded case, the inverse is given by the map
\[
\uEnd_{\Tot(A)}\to\uEnd_A,\, f  \mapsto \Tot^{-1}(f\circ \mu^{-1}).
\]
\end{lem}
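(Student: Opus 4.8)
\textit{Plan.} The plan is to recognise the operad morphism $\uEnd_A\to\uEnd_{\Tot(A)}$ of \Cref{inverse} as a composite of two invertible maps and then invert each factor. This morphism is produced by applying the third part of \Cref{S4} to the enriched lax monoidal functor $\mathfrak{Tot}:\utC\to\ufC$. Unwinding that construction at arity $n$, the morphism is
\[\utC(A^{\otimes n},A)\xrightarrow{\ \mathfrak{Tot}\ }\ufC(\Tot(A^{\otimes n}),\Tot(A))\xrightarrow{\ -\circ\mu\ }\ufC(\Tot(A)^{\otimes n},\Tot(A)),\]
that is, $f\mapsto \Tot(f)\circ\mu$, where $\mu:\Tot(A)^{\otimes n}\to\Tot(A^{\otimes n})$ is the (iterated) lax monoidal structure map of \Cref{monoidal}. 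The second arrow is precisely the instance of the map \Cref{enrichedmap} obtained by taking the two functors to be $F(X)=\Tot(X)^{\otimes n}$ and $G(X)=\Tot(X^{\otimes n})$, with $Y=\Tot(A)$ and with the natural transformation over $\vbc$ supplied by $\mu$; so it is precomposition with $\mu$.

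First I would check well-definedness of the proposed inverse $\Phi(f):=\Tot^{-1}(f\circ\mu^{-1})$. Since $A$ is bounded, $\mu$ is an isomorphism by \Cref{monoidal}, with explicit strict bidegree-$(0,0)$ inverse $\mu^{-1}$ described in \Cref{mui}. Hence for $f\in\ufC(\Tot(A)^{\otimes n},\Tot(A))$ the composite $f\circ\mu^{-1}$ again lies in $\ufC(\Tot(A^{\otimes n}),\Tot(A))$ and has the same bidegree as $f$, and $\Tot^{-1}$ is defined on all of this hom-object by \Cref{enrichedtot} and \Cref{4.39}. I would also record that, because $\mu$ and $\mu^{-1}$ have bidegree $(0,0)$, all Koszul signs in the enriched composition of \Cref{ufMod} vanish, so the two precomposition maps are literally $h\mapsto h\mu$ and $f\mapsto f\mu^{-1}$. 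By the inverse construction \Cref{enrichedmapinverse}, precomposition with $\mu^{-1}$ is the two-sided inverse of precomposition with $\mu$; this is exactly what diagram \Cref{complicateddiagram} establishes.

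With these facts in place the verification is formal. Using associativity of composition, the identities $\mu\circ\mu^{-1}=\mathrm{id}$ and $\mu^{-1}\circ\mu=\mathrm{id}$, and the fact that $\mathfrak{Tot}$ and $\Tot^{-1}$ are mutually inverse (\Cref{4.39}), one gets
\[\Phi(\Tot(f)\circ\mu)=\Tot^{-1}\bigl((\Tot(f)\circ\mu)\circ\mu^{-1}\bigr)=\Tot^{-1}(\Tot(f))=f\]
and, for $g\in\uEnd_{\Tot(A)}(n)$,
\[\Tot(\Phi(g))\circ\mu=\Tot\bigl(\Tot^{-1}(g\circ\mu^{-1})\bigr)\circ\mu=(g\circ\mu^{-1})\circ\mu=g.\]
Thus $\Phi$ is a two-sided inverse of the morphism of \Cref{inverse}; being the inverse of an operad morphism, it is automatically a morphism of operads, so no additional operadic compatibility has to be checked.

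The main obstacle is the identification carried out in the first paragraph: one must be certain that the abstractly defined morphism of \Cref{inverse} really factors as $\mathfrak{Tot}$ followed by precomposition with $\mu$, and that precomposition with $\mu^{-1}$ genuinely inverts precomposition with $\mu$ at the level of enriched hom-objects. The first point is a matter of tracing through the construction behind \Cref{S4}, while the second is precisely the content of the lemma proved via diagram \Cref{complicateddiagram}. Once both are secured, the only remaining delicate step is confirming that composing with the bidegree-$(0,0)$ maps $\mu$ and $\mu^{-1}$ introduces no signs, after which everything reduces to bookkeeping.
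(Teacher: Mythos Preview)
Your proposal is correct and takes essentially the same approach as the paper: both factor the forward morphism of \Cref{inverse} as $\mathfrak{Tot}$ followed by precomposition with $\mu$, and invert each factor separately using \Cref{4.39} and the lemma encoded in diagram~(\ref{complicateddiagram}). The paper simply unwinds the abstractly defined inverse from \Cref{enrichedmapinverse} and identifies it as $f\mapsto\Tot^{-1}(c(f,\mu^{-1}))=\Tot^{-1}(f\circ\mu^{-1})$, whereas you additionally write out the two compositions to check directly that $\Phi$ is a two-sided inverse; this extra step is harmless but not strictly needed given the cited lemma.
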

\begin{proof}
The inverse is given by the composite
\[
\uEnd_{\Tot(A)}(n)=\ufC(\Tot(A)^{\otimes n},\Tot(A))\longrightarrow  \ufC(\Tot(A^{\otimes n}),\Tot(A))\longrightarrow\utC(A^{\otimes n},A)=\uEnd_A(n).
 \]

The second map is given by $\mathfrak{Tot}^{-1}$ defined in \Cref{enrichedtot}. To describe the first map, let $R$ be concentrated in bidegree $(0,0)$ with trivial vertical differential. Then the first map is given by the following composite
\begin{align*}
\ufC(\Tot(A)^{\otimes n},\Tot(A))\cong R\otimes\ufC(\Tot(A)^{\otimes n},\Tot(A))\xrightarrow{\underline{\mu}^{-1}\otimes 1}\\
\ufC(\Tot(A^{\otimes n}),\Tot(A)^{\otimes n})\otimes\ufC(\Tot(A)^{\otimes n},\Tot(A))
\xrightarrow{c}\ufC(\Tot(A^{\otimes n}),\Tot(A)), 
\end{align*}
where $c$ is the composition in $\ufC$, defined in \Cref{ufMod}. The map $\underline{\mu}^{-1}$ is the adjoint of $\mu^{-1}$ under the bijection from \Cref{adjunction}. Explicitly,
\[
\underline{\mu}^{-1}:R \to \ufC(\Tot(A^{\otimes n}),\Tot(A)^{\otimes n}),\, 1 \mapsto (a\mapsto \mu^{-1}(a)).
\]
Putting all this together, we get the map 
\[
\uEnd_{\Tot(A)}\to\uEnd_A,\, f  \mapsto \Tot^{-1}(c(f, \mu^{-1})).
\]
Since the total degree of $\mu^{-1}$ is 0, composition reduces to $c(f,\mu^{-1})=f\circ \mu^{-1}$ and we get the desired map.
\end{proof}

\section{Derived $A_\infty$-algebras and filtered $A_\infty$-algebras}\label{sec:deriveddef}

We assume that the reader is familiar with the basic definitions of $A_\infty$-algebras, although we will also recall some conventions if necessary. 
In this section we recall some definitions and results for our work with derived $A_\infty$-algebras and present some new ways of interpreting them in terms of operads and collections.

After introducing derived $A_\infty$-algebras we will furthermore recall the notion of filtered $A_\infty$-algebra, since it will play a role in linking derived $A_\infty$-algebras abd $A_\infty$-algebras using totalization.
Let us jump right in with the definition, using the grading and sign conventions from \Cref{sec:bigraded}.
  \begin{defin}\label{def:dAdef}
  A {derived $A_\infty$-algebra} on a $(\Z,\Z)$-bigraded $R$-module $A$ consist of a family of $R$-linear maps 
\[m_{ij}:A^{\otimes j}\to A\]
of bidegree $(i,2-(i+j))$ for each $j\geq 1$, $i\geq 0$, satisfying the equation
\begin{equation}\label{dainftyequation}
\underset{j=r+1+t}{\sum_{\mathclap{u=i+p, v=j+q-1}}}(-1)^{rq+t+pj}m_{ij}(1^{\otimes r}\otimes m_{pq}\otimes 1^{\otimes t})=0
\end{equation}
for all $u\geq 0$ and $v\geq 1$. 
\end{defin}

We therefore see that an $A_\infty$-algebra is the same as a derived $A_\infty$-algebra such that $m_{ij}=0$ for all $i>0$.
Furthermore, one can check that, on any derived $A_\infty$-algebra $A$, the maps $d_i=(-1)^{i}m_{i1}$ define a twisted complex structure. This leads to the possibility of defining a derived $A_\infty$-algebra as a twisted complex with some extra structure, see \Cref{equivalent}.

According to \Cref{def:dAdef}, there are two equivalent ways of defining the operad of derived $A_\infty$-algebras $d\calA_\infty$ depending on the underlying category. One of them works on the category of bigraded modules $\bgmod$ and the other one is suitable for the category of vertical bicomplexes $\vbc$. We give the two of them here as we are going to use both.

\begin{defin}
The operad $d\calA_\infty$ in $\bgmod$ is the operad generated by $\{m_{ij}\}_{i\geq 0,j\geq 1}$ subject to the derived $A_\infty$-relation

\[\underset{j=r+1+t}{\sum_{\mathclap{u=i+p, v=j+q-1}}}(-1)^{rq+t+pj}\gamma(m_{ij};1^{ r}, m_{pq}, 1^{t})=0\]
for all $u\geq 0$ and $v\geq 1$. 

The operad $d\calA_\infty$ in $\vbc$ is the quasi-free operad generated by $\{m_{ij}\}_{(i,j)\neq (0,1)}$ with vertical differential given by
\[\partial_\infty(m_{uv})=\ -\underset{\mathclap{j=r+1+t, (i,j)\neq (0,1)\neq (p,q)}}{\sum_{u=i+p, v=j+q-1}}(-1)^{rq+t+pj}\gamma(m_{ij};1^{ r}, m_{pq}, 1^{t}).\]
\end{defin}

\begin{defin}
Let $A$ and $B$ be derived $A_\infty$-algebras with respective structure maps $m^A$ and $m^B$. An \emph{$\infty$-morphism of derived $A_\infty$-algebras} $f:A\to B$ is a family of maps $f_{st}:A^{\otimes t}\to B$ of bidegree $(s,1-s-t)$ satisfying
\begin{equation}\label{dinftymaps}
\underset{j=r+1+t}{\sum_{u=i+p, v=j+q-1}}(-1)^{rq+t+pj}f_{ij}(1^{\otimes r}\otimes m_{pq}^A\otimes 1^{\otimes s})=\underset{v=q_1+\cdots +q_j}{\sum_{u=i+p_1+\cdots +p_j}}(-1)^{\epsilon} m^B_{ij}(f_{p_1 q_1}\otimes\cdots\otimes f_{p_j q_j})
\end{equation}
for all $u\geq 0$ and $v\geq 1$, where
\[\epsilon = u + \sum_{1\leq w < l \leq j} q_w(1-p_l-q_l)  + \sum_{w=1}^j p_w(j-w).\]
\end{defin}

We will make use of the filtration induced by the totalization functor in order to relate classical $A_\infty$-algebras to derived $A_\infty$-algebras.

\begin{defin}
A \emph{filtered} $A_\infty$-algebra is an $A_\infty$-algebra $(A,m_i)$ together with a filtration $\{F_pA^i\}_{p\in\Z}$
on each $R$-module $A^i$ such that for all $i \geq 1$ and all $p_1,\dots , p_i \in\Z$ and $n_1,\dots , n_i \geq 0$,
\[m_i(F_{p_1}A^{n_1} \otimes \cdots \otimes F_{p_i}A^{n_i} ) \subseteq F_{p_1+\cdots
+p_i}A^{n_1+\cdots+n_i+2-i}.\]
\end{defin}

\begin{remark}\label{filterversion}
Consider $\calA_\infty$ as an operad in filtered complexes with the trivial filtration and let $K$
be a filtered complex. There is a one-to-one correspondence between filtered $A_\infty$-algebra structures on $K$ and
morphisms of operads in filtered complexes $\calA_\infty \longrightarrow \underline{\End}_K$ (recall $\underline{\Hom}$ from \Cref{filterend}). To see this, notice that if one forgets the
filtrations, such a map of operads gives an $A_\infty$-algebra structure on $K$. The fact that this is a map of operads
in filtered complexes implies that all the $m_i$ respect the filtrations. 

The image of $\calA_\infty$ lies in $\End_K=F_0\underline{\End}_K$, so if we regard $\calA_\infty$ as an operad in cochain complexes, then we get a one-to-one correspondence between filtered $A_\infty$-algebra structures on $K$ and
morphisms of operads in cochain complexes $\calA_\infty \longrightarrow \End_K$.
\end{remark}

\begin{defin}
A \emph{morphism of filtered $A_\infty$-algebras} from $(A,m_i, F)$ to $(B,m_i, F)$ is an $\infty$-morphism
$f : (A,m_i) \longrightarrow (B,m_i)$ of $A_\infty$-algebras such that each map $f_j : A^{\otimes j} \longrightarrow A$ is compatible with filtrations, i.e.
\[f_j(F_{p_1}A^{n_1} \otimes \cdots \otimes F_{p_j}A^{n_j} ) \subseteq F_{p_1+\cdots +p_j}B^{n_1+\cdots +n_j+1-j} ,\]
for all $j \geq 1$, $p_1,\dots p_j \in\Z$ and $n_1,\dots , n_j \geq 0$.
\end{defin}

\section{Operadic totalization and vertical operadic suspension}\label{sec:operadic}

In this section we define an operadic suspension, which is a slight modification of the one found in \cite{ward}. This construction will help us define $A_\infty$-multiplications and derived $A_\infty$-multiplications in a simple way. The motivation to introduce operadic suspension is that signs in derived $A_\infty$-algebras (as well as the single graded setting) and related Lie structures are know to arise from a sequence of shifts. We are going to work only with non-symmetric operads, although most of what we do is also valid in the symmetric case.

We start by applying the totalization functor defined in \Cref{sec:total} to operads, defining a functor from operads in brigraded modules (resp. twisted complexes) to operads in graded modules (resp. cochain complexes). The combination of this with operadic suspension provides the signs required to encode derived $A_\infty$-algebras in a very concise and practical way.

 We use \Cref{monoidal} and the fact that the image of an operad under a lax monoidal functor is also an operad \cite[Proposition 3.1.1(a)]{fresse} to guarantee that applying totalization on an operad will result again in an operad.

Let $\OO$ be either a bigraded operad, i.e. an operad in te category of bigraded $R$-modules or an operad in twisted complexes. We define $\Tot(\OO)$ as the operad of graded $R$-modules (or cochain complexes) for which \[\Tot(\OO(n))^d=\bigoplus_{i<0}\OO(n)^{d-i}_i\oplus\prod_{i\geq 0} \OO(n)^{d-i}_i\] is the image of $\OO(n)$ under the totalization functor, and the insertion maps $\bar{\circ}_r$ are given by the composition  

\begin{equation}\label{insertion}
\Tot(\OO(n))\otimes \Tot(\OO(m))\xrightarrow{\mu} \Tot(\OO(n)\otimes \OO(m)) \xrightarrow{\Tot(\circ_r)} \Tot(\OO(n+m-1)).
\end{equation}
Explicitly,
\[(x\bar{\circ}_ry)_k=\sum_{k_1+k_2=k} (-1)^{k_1d_2} x_{k_1}\circ_r y_{k_2}\]

for $x=(x_i)_i\in \Tot(\OO(n))^{d_1}$ and $y=(y_j)_j\in \Tot(\OO(m))^{d_2}$.

More generally, operadic composition $\bar{\gamma}$ is defined by the composite
\[
\Tot(\OO(N))\otimes \Tot(\OO(a_1))\otimes\cdots\otimes \Tot(\OO(a_N)) \xrightarrow{\mu}
 \Tot(\OO(N)\otimes \OO(a_1)\otimes\cdots\otimes \OO(a_N)) \xrightarrow{\Tot(\gamma)}  \Tot\left(\OO\left(\sum a_i\right)\right).
\]
This map can be computed explicitly by iteration of the insertions, giving the following.  Note that the sign is precisely the same appearing in \Cref{mu}.

\begin{lem}\label{totcomp}
The operadic composition $\bar{\gamma}$ on $\Tot(\OO)$ is given by
\begin{equation*}
\bar{\gamma}(x;x^1,\dots, x^N)_k=\sum_{k_0+k_1+\cdots+k_N=k}(-1)^{\varepsilon}\gamma(x_{k_0};x^1_{k_1},\dots, x^N_{k_N})
\end{equation*}
for $x=(x_k)_k\in\Tot(\OO(N))^{d_0}$ and $x^i=(x^i_k)_k\in\Tot(\OO(a_i))^{d_i}$, where 
\begin{equation}
\varepsilon=\sum_{j=1}^m d_j\sum_{i=0}^{j-1}k_i
\end{equation}
and $\gamma$ is the operadic composition on $\OO$.
\end{lem}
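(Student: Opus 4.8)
The plan is to unfold the definition of $\bar{\gamma}$ as the composite $\Tot(\gamma)\circ\mu$ from the displayed diagram in \eqref{insertion} and then compute it by iterating the binary insertion $\bar{\circ}$, tracking only the signs contributed by the lax monoidal structure map $\mu$. The crucial observation, already visible in the explicit binary formula $(x\bar{\circ}_r y)_k=\sum_{k_1+k_2=k}(-1)^{k_1 d_2}x_{k_1}\circ_r y_{k_2}$, is that $\Tot(\gamma)$ introduces no signs of its own: the only sign $(-1)^{k_1 d_2}$ comes from $\mu$ (see \Cref{monoidal}), the operadic insertion $\circ_r$ on $\OO$ is applied componentwise, and this sign is independent of the position $r$. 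Thus iterating binary insertions reduces the whole computation to bookkeeping the accumulated $\mu$-signs.

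I would then argue by induction on $N$, writing $\bar{\gamma}(x;x^1,\dots,x^N)$ as the sequential insertion of $x^1,\dots,x^N$ into $x$. The base case $N=1$ is the binary formula. For the inductive step, inserting $x^j$ into the partial composite built from $x,x^1,\dots,x^{j-1}$ produces, by the binary formula, the sign $(-1)^{d_j\cdot h}$, where $h$ is the horizontal index of the relevant component of that partial composite, namely $h=k_0+\cdots+k_{j-1}$. Since this sign does not depend on the slot into which $x^j$ is inserted, accumulating over $j=1,\dots,N$ yields the total sign $\sum_{j=1}^{N} d_j\sum_{i=0}^{j-1}k_i=\sum_{0\le i<j\le N}k_i d_j$, which is exactly the stated $\varepsilon$.

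It then remains to note that the underlying unsigned term is $\gamma(x_{k_0};x^1_{k_1},\dots,x^N_{k_N})$: iterating the insertions $\circ_r$ on the homogeneous components $x_{k_0},x^i_{k_i}$ reassembles the full operadic composition $\gamma$ on $\OO$, and any Koszul signs internal to $\OO$ are already absorbed into $\gamma$ itself and need not be tracked separately. Summing over all decompositions $k_0+\cdots+k_N=k$ gives the claimed formula. As a cross-check, the same identity follows directly from the multi-factor form of the structure map recorded in \Cref{mui}, whose sign on components of horizontal degrees $k_0,\dots,k_N$ and total degrees $d_0,\dots,d_N$ is precisely $\sum_{0\le i<j\le N}k_i d_j$. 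The main obstacle is this sign bookkeeping, specifically verifying that the insertion position is irrelevant to the totalization sign and that the partial composites accumulate exactly $\sum_{i=0}^{j-1}k_i$ as the horizontal index multiplying $d_j$; once these are checked, the reindexing $\sum_{j}d_j\sum_{i<j}k_i=\sum_{i<j}k_i d_j$ is immediate.
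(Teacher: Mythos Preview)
Your proposal is correct and follows essentially the same route as the paper: the paper merely states that the formula ``can be computed explicitly by iteration of the insertion $\bar{\circ}$'' and then remarks that the resulting sign is precisely the one appearing in \Cref{mui}, which is exactly your inductive bookkeeping of the $\mu$-signs together with the cross-check against the multi-factor $\mu$. You have simply fleshed out the details the paper leaves implicit.
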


Let us now move on to defining operadic suspension for our setting.
We define $\Lambda(n)=S^{n-1}R$, where  $S$ is a vertical shift of degree so that $\Lambda(n)$ is the underlying ring $R$ concentrated in bidegree  $(0,n-1)$. We express the basis element of $\Lambda(n)$ as $e^n=e_1\land\cdots\land e_n$. We then have an operad structure on $\Lambda=\{\Lambda(n)\}_{n\geq 0}$ via the following insertion maps

\begin{equation}\label{lambdainsertion}
\begin{tikzcd}
\Lambda(n)\otimes\Lambda(m) \arrow[r, "\circ_i"] & \Lambda(n+m-1)\\
(e_1\land\cdots\land e_n)\otimes(e_1\land\cdots\land e_m)\arrow[r, mapsto] & (-1)^{(n-i)(m-1)}e_1\land\cdots\land e_{n+m-1}.
\end{tikzcd}
\end{equation}

We are inserting the second factor onto the first one, so the sign can be explained  by moving the power $e^m$ of degree $m-1$ to the $i$-th position of $e^n$ passing by $e_{n}$ through $e_{i+1}$. More compactly, \[e^n\circ_i e^m=(-1)^{(n-i)(m-1)}e^{n+m-1}.\] The unit of this operad is $e^1\in\Lambda(1)$. It can be checked by direct computation that $\Lambda$ satisfies the axioms of an operad of graded modules. In a similar way we can define $\Lambda^-(n)=S^{1-n}R$, with the same insertion maps.

\begin{defin}
Let $\mathcal{O}$ be a bigraded linear operad. The \emph{vertical operadic suspension} $\mathfrak{s}\OO$ of $\mathcal{O}$ is given arity-wise by $\mathfrak{s}\OO(n)=\mathcal{O}(n)\otimes\Lambda(n)$ with diagonal composition. Similarly, we define the \emph{vertical operadic desuspension} $\mathfrak{s}^{-1}\OO(n)=\mathcal{O}(n)\otimes\Lambda^-(n)$.
\end{defin}

We may identify the elements of $\mathcal{O}$ with the elements of $\mathfrak{s}\OO$ as follows.
\begin{defin}
For $x\in\OO(n)$ of bidegree $(k,d-k)$, its \emph{natural bidegree} in $\s\OO$ is the pair $(k,d+n-k-1)$. To distinguish both degrees we call $(k,d-k)$ the \emph{internal bidegree} of $x$, since this is the degree that $x$ inherits from the grading of $\OO$. 
\end{defin}

If we write $\circ_{r+1}$ for the operadic insertion on $\OO$ and $\tilde{\circ}_{r+1}$ for the operadic insertion on $\mathfrak{s}\OO$, we may find a relation between the two insertion maps. 

\begin{lem}\label{sign}
For $x\in\OO(n)$ and $y\in\OO(m)^{q}_l$ we have $
x\tilde{\circ}_{r+1}y=(-1)^{(n-1)q+(n-1)(m-1)+r(m-1)}x\circ_{r+1} y$
\qed
\end{lem}

\begin{remark}
This operation leads to the Lie bracket from \cite{RW}, which implies that $m=\sum_{i,j}m_{ij}$ is a derived $A_\infty$-multiplication if and only if for all $u\geq 0$
\begin{equation}\label{sharp}
\sum_{i+j=u}\sum_{l,k}(-1)^im_{jl}\tilde{\circ}m_{ik}=0.
\end{equation}
In \cite[Proposition 2.15]{RW} this equation is described in terms of a sharp operator $\sharp$.
\end{remark}

We then arrive at the following theorem, which is a generalisation of \cite[Chapter 3, Lemma 3.16]{operads}. The original statement is about vector spaces, but it is still true when $R$ is not a field. The isomorphism is given by $\sigma^{-1}(F)=(-1)^{\binom{n}{2}}S^{-1}\circ F\circ S^{\otimes n}$ for $F\in \End_{S A}(n)$. The symbol $\circ$ here is just composition.

\begin{thm}\label{thm:shift}
There is an isomorphism of operads $\End_{ A}\cong \mathfrak{s}\End_{SA}$ for any bigraded $R$-module $A$.\qed
\end{thm}

Even though $\sigma$ is only a map of graded modules, it can be shown in a completely analogous way to the above theorem that $\bar{\sigma}=(-1)^{\binom{n}{2}}\sigma$ induces an isomorphism of operads

\begin{equation}\label{eq:barsigma}
\bar{\sigma} : \End_A \cong \mathfrak{s}\End_{SA}.
\end{equation}

\begin{remark}
The functor $\s:\col\to \col$ defines a lax monoidal functor. When restricted to the subcategory of reduced operads, it is strong monoidal. This can be verified straight from the definitions given here, being mindful of the signs introduced by the Koszul rule.
\end{remark}

Now we are going to combine vertical operadic suspension and totalization. More precisely, the \emph{totalized vertical suspension} of a bigraded operad $\OO$ is the graded operad $\Tot(\s\OO)$. This operad has an insertion map explicitly given by
\begin{equation}\label{star}
(x\star_{r+1} y)_k=\sum_{k_1+k_2=k}(-1)^{(n-1)(d_2-k_2-m+1)+(n-1)(m-1)+r(m-1)+k_1d_2}x_{k_1}\circ_{r+1}y_{k_2}
\end{equation}
for $x=(x_i)_i\in \Tot(\s\OO(n))^{d_1}$ and $x=(x_j)_j\in \Tot(\s\OO(m))^{d_2}$. As usual, denote \[x\star y=\sum_{r=0}^{m-1}x\star_{r+1}y.\]

This star operation is precisely the star operation from \cite[\S 5.1]{LRW}, i.e. the convolution operation on $\Hom((dAs)^{!}, \End_A)$. In particular, we can recover the Lie bracket from in \cite{LRW}. We will do this in \Cref{biliebracket}.

We note that of course if we work in bigraded modules concentrated in horizontal degree 0, we recover the classical notion of (single-graded) operadic suspension, so, before continuing, let us show a lemma that allows us to work only with the single-graded operadic suspension if needed.
\begin{propo}\label{extrasign}
For a bigraded operad $\OO$ we have an isomorphism $\Tot(\s\OO)\cong \s \Tot(\OO)$, where the suspension on the left hand side is the bigraded version and on the right hand side is the single-graded version. 
\end{propo}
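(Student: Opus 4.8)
The plan is to exhibit the isomorphism arity by arity and then check that it intertwines the two operadic compositions. As graded modules we have $\Tot(\s\OO)(n) = \Tot(\OO(n)\otimes\Lambda(n))$ while $\s\Tot(\OO)(n) = \Tot(\OO(n))\otimes\Lambda(n)$. Since $\Lambda(n)$ is the free rank-one module concentrated in horizontal degree $0$ and vertical degree $n-1$, tensoring with it only relabels the vertical degree without interfering with the direct-sum/product splitting that defines totalization; concretely $(\OO(n)\otimes\Lambda(n))^{d-i}_i \cong \OO(n)^{d-i-(n-1)}_i$, so in total degree $d$ both sides are canonically $\bigoplus_{i<0}\OO(n)^{d-n+1-i}_i \oplus \prod_{i\geq 0}\OO(n)^{d-n+1-i}_i$. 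I would take the isomorphism to be the monoidal structure map $\mu$ of \Cref{monoidal}, which is invertible here because $\Lambda(n)$ is horizontally bounded; by \eqref{mu1}, with the second factor concentrated at horizontal degree $0$ and total degree $n-1$, its effect on the $i$-th totalization component is multiplication by $(-1)^{i(n-1)}$. Thus I define $\Phi_n \colon \s\Tot(\OO)(n)\to\Tot(\s\OO)(n)$ componentwise by $(\xi_i)_i\otimes e^n \mapsto \big((-1)^{i(n-1)}\,\xi_i\otimes e^n\big)_i$, a degreewise bijection with the evident inverse.

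The substance of the proof is operadic compatibility: that $\Phi$ carries the insertion $\tilde{\circ}_{r+1}$ on $\s\Tot(\OO)$ — obtained by applying the single-graded suspension sign of \Cref{tilde} to the totalized composition $\bar{\circ}_{r+1}$ of \eqref{insertion} — to the star insertion $\star_{r+1}$ on $\Tot(\s\OO)$ of \eqref{star}. I would compute both sides on homogeneous elements $x\in\Tot(\s\OO)(n)^{D_1}$, $y\in\Tot(\s\OO)(m)^{D_2}$ and match the exponents of $(-1)$ summand by summand over $k_1+k_2=k$. The one genuinely delicate point is that the internal degree appearing in \Cref{tilde} and in \eqref{insertion} is the degree in $\Tot(\OO)$, which differs from the degree in $\Tot(\s\OO)$ by the vertical shift: an element of total degree $D_2$ on the suspended side corresponds to one of degree $d_2 = D_2-(m-1)$ on the unsuspended side. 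Substituting $d_2 = D_2-(m-1)$ throughout, the bare sign of $\tilde{\circ}_{r+1}\,\bar{\circ}_{r+1}$ differs from that of $\star_{r+1}$ by exactly $(n-1)k_2 + (m-1)k_1$ on the $(k_1,k_2)$-summand. This is precisely accounted for by the structure-map signs $\Phi$ introduces: on the inputs it contributes $(-1)^{k_1(n-1)}$ and $(-1)^{k_2(m-1)}$, on the output of arity $n+m-1$ it contributes $(-1)^{(k_1+k_2)(n+m-2)}$, and the difference of these is $k_1(m-1)+k_2(n-1)$. A short modular check (using $2(m-1)\equiv 0$) confirms the two exponents coincide, so $\Phi$ is multiplicative; preservation of the unit $e^1$ is immediate.

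The main obstacle is therefore purely the sign bookkeeping, and in particular keeping the vertical degree shift between the internal degrees on the two sides straight — it is easy to conflate $D_2$ with $d_2$. I expect no conceptual difficulty: morally the statement says that totalization commutes with vertical operadic suspension because $\Lambda$ lives in horizontal degree zero, so $\Tot$ and ``$\otimes\Lambda$'' may be performed in either order. One could alternatively phrase the whole argument by observing that $\s(-)$ is the arity-wise (Hadamard) tensor product with the operad $\Lambda$, that the lax symmetric monoidal functor $\Tot$ of \Cref{monoidal} preserves such products up to its structure map, and that $\Tot(\Lambda)\cong\Lambda$ as operads since $\mu$ introduces no sign on horizontal-degree-zero elements; the structure map is invertible because $\Lambda$ is bounded. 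I would, however, still carry out the explicit sign comparison above as the actual verification, since the paper already supplies all of the relevant formulas.
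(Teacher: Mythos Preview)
Your proposal is correct and follows essentially the same approach as the paper: define a componentwise isomorphism between $\Tot(\s\OO)(n)$ and $\s\Tot(\OO)(n)$ given by multiplication by a sign depending on the horizontal index and the arity, then verify directly that the two insertion signs match summand by summand. The paper goes in the opposite direction, sending $(x_k)_k\mapsto((-1)^{kn}x_k)_k$, whereas you use $(-1)^{k(n-1)}$; both choices work (they differ by the automorphism $(-1)^k$), and your motivation of the sign via the monoidal structure map $\mu$ of \Cref{monoidal} together with the Hadamard-product viewpoint is a pleasant conceptual addition the paper does not make explicit.
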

\begin{proof}
 Note that we may identify each element $x=(x_k\otimes e^n )_k\in\Tot(\s\OO(n))$ with the element $x=(x_k)_k\otimes e^n\in\s\Tot(\OO(n))$. Thus, for an element $(x_k)_k\in \Tot(\s\OO(n))$ the isomorphism is given by
\[
f:\Tot(\s\OO(n))\cong \s \Tot(\OO(n)),\, (x_k)_k\mapsto ((-1)^{kn}x_k)_k
\]
Clearly, this map is bijective so we just need to check that it commutes with insertions. Recall from \Cref{star} that the insertion on $\Tot(\s\OO)$ is given by
\begin{equation*}
(x\star_{r+1} y)_k=\sum_{k_1+k_2=k}(-1)^{(n-1)(d_2-k_2-n+1)+(n-1)(m-1)+r(m-1)+k_1d_2}x_{k_1}\circ_{r+1}y_{k_2}
\end{equation*}
for $x=(x_i)_i\in \Tot(\s\OO(n))^{d_1}$ and $y=(y_j)_j\in \Tot(\s\OO(m))^{d_2}$. Similarly, we may compute the insertion on $\s\Tot(\OO)$ by combining the sign produced first by $\Tot$ and then by $\s$. This results in the following insertion map 
\begin{equation*}
(x\star_{r+1}' y)_k=\sum_{k_1+k_2=k}(-1)^{(n-1)(d_2-n+1)+(n-1)(m-1)+r(m-1)+k_1(d_2-m+1)}x_{k_1}\circ_{r+1}y_{k_2}.
\end{equation*}
Now let us show that $f(x\star y)=f(x)\star f(y)$. We have that $f((x\star_{r+1} y))_k$ equals 
\begin{align*}
&\sum_{k_1+k_2=k}(-1)^{k(n+m-1)+(n-1)(d_2-k_2-n+1)+(n-1)(m-1)+r(m-1)+k_1d_2}x_{k_1}\circ_{r+1}y_{k_2}\\
&=\sum_{k_1+k_2=k}(-1)^{(n-1)(d_2-n+1)+(n-1)(m-1)+r(m-1)+k_1(d_2-m+1)}f(x_{k_1})\circ_{r+1}f(y_{k_2})\\
&=(f(x)\star_{r+1} f(y))_k
\end{align*}
as desired.
\end{proof}

\begin{remark}\label{othermu}

As we mentioned in \Cref{heuristic}, there exist other possible ways of totalizing by varying the natural transformation $\mu$. For instance, we can choose the totalization functor $\Tot'$ which is the same as $\Tot$ but with a natural transformation $\mu'$ defined in such a way that the insertion on $\Tot'(\OO)$ is defined by \[(x\hat{\circ}y)_k=\sum_{k_1+k_2=k}(-1)^{k_2n_1}x_{k_1}\circ y_{k_2}.\] 

This is also a valid approach for our purposes and there is simply a sign difference, but we have chosen our convention to be consistent with other conventions, such as the derived $A_\infty$-equation. However, it can be verified that $\Tot'(\s\OO)=\s \Tot'(\OO)$. With the original totalization we have a non identity isomorphism given by \Cref{extrasign}. Similar relations can be found among the other alternatives mentioned in \Cref{heuristic}.

\end{remark}

Using the operadic structure on $\Tot(\s\OO)$, we can describe now derived $A_\infty$-multiplications.

\begin{defin}\label{derivedmultiplication}
A \emph{derived $A_\infty$-multiplication} on a bigraded operad $\OO$ is a map of operads $d\calA_\infty\to\OO$.
\end{defin}

In particular, a derived $A_\infty$-algebra $A$ is equivalent to a derived $A_\infty$-multiplication on its endomorphism operad.

\begin{lem}\label{mstar}
A derived $A_\infty$-multiplication on a bigraded operad $\OO$ is equivalent to an element $m\in\Tot(\s\OO)$ of degree 1 concentrated in positive arity such that $m\star m = 0$. 
\end{lem}

\begin{proof}

A derived $A_\infty$-multiplication on $\OO$ is by \Cref{derivedmultiplication} a map $f:d\calA_\infty\to\OO$.
Since $\calA_\infty$ is generated by elements $\mu_{ij}$ of bidegree $(i,2-i-j)$, such a map is determined by the elements $m_{ij}=f(\mu_{ij})\in\OO^{2-i-j}_i(j)$. Consider $m_j = (m_{ij})_i\in\Tot(\s\OO(j))$. We have that $\deg(m_j)=1$ for all $j$. Therefore, let $m=m_1+m_2+\cdots\in\Tot(\s\OO)$. We may check that $m\star m=0$. For that we just need to check \Cref{star}. On arity $n$, this amounts to computing
\[(m\star m)_k = \sum_{r=0}^{n-1}\underset{j+q=n-1}{\sum_{i+p=k}}(-1)^{rp+j-r-1+ pj}m_{ij}\circ_{r+1}m_{pq}=0.\]
The above expression vanishes precisely because the elements $m_{ij}$ satisfy the derived $A_\infty$-equation.

Conversely, let $m\in\Tot(\s\OO)$ of degree 1, is concentrated in positive arity and satisfying $m\star m=0$. We can split $m$ into its arity and horizontal degree components as $m=\sum_{i,j}m_{ij}$. As we have seen, the fact that $m\star m=0$ is equivalent to the elements $m_{ij}$ satisfying the derived $A_\infty$-equation, and therefore, a map $f:d\calA_\infty\to\OO$ is determined by $f(\mu_{ij})=m_{ij}$, which is of bidegree $(i,2-i-j)$. 
\end{proof}

\begin{remark}\label{rem:ainfinitymult}
Note that there are obvious analogous definitions for the less structured situations, i.e. 
an operad $\OO$ has an $A_\infty$-multiplication if there is a map $A_\infty \longrightarrow \OO$  from the operad $A_\infty$. 
An $A_\infty$-multiplication on an operad $\OO$ is equivalent to an element $m\in\s\OO$ of degree 1 concentrated in positive arity such that $m\tilde{\circ}m=0$, where $x\tilde{\circ} y=\sum_i x\tilde{\circ}_i y$, and $\tilde{\circ}_i$ is the operadic insertion in $\mathfrak{s}\OO$.
\end{remark}

\section{Bigraded braces and totalized braces}\label{sec:braces}

Brace algebras appear naturally in the context of operads when we fix the first argument of operadic composition \cite{GV}. This simple idea gives rise to a very rich structure that will be relevant to our work with derived $A_\infty$-structures. We will start off with recalling the classical definition of braces in (single) graded modules. Once we are familiar with these, we can move on to generalising the definition to the bigraded context. We chose this method of presentation as we will use single graded braces again when discussing the classical Deligne conjecture in \Cref{sec:classicaldeligne}, plus we find it easier to follow and develop an intuition for the material starting with the single graded definition.

\begin{defin}\label{braces}
A \emph{brace algebra} on a graded module $A$ consists of a family of maps \[b_n:A^{\otimes 1+n}\to A\] called \emph{braces}, that we evaluate on $(x,x_1,\dots, x_n)$ as $b_n(x;x_1,\dots, x_n)$. They must satisfy the \emph{brace relation}

\begin{align*}
b_m(b_n(x;x_1,\dots, x_n);y_1,\dots,y_m)=&\\
\underset{j_1\dots, j_n}{\sum_{i_1,\dots, i_n}}(-1)^{\varepsilon}b_l(x; y_1,\dots, y_{i_1},b_{j_1}(x_1;y_{i_1+1},&\dots, y_{i_1+j_1}),\dots, b_{j_n}(x_n;y_{i_n+1},\dots, y_{i_n+j_n}),\dots,y_m)
\end{align*}
where $l=n+\sum_{p=1}^n i_p$ and $\varepsilon=\sum_{p=1}^n\deg(x_p)\sum_{q=1}^{i_p}\deg(y_q),$ i.e. the sign is picked up by the $x_i$'s passing by the $y_i$'s in the shuffle.
\end{defin}

\begin{remark}
Some authors might use the notation $b_{1+n}$ instead of $b_n$, but the first element is usually going to have a different role from the others, so we found $b_n$ more intuitive. A shorter notation for $b_n(x;x_1,\dots,x_n)$ found in the literature (\cite{GV}, \cite{getzler}) is $x\{x_1,\dots, x_n\}$. 
\end{remark}

Operads naturally carry a brace algebra structure as follows. 
Given an operad $\OO$ with composition map $\gamma:\OO\circ\OO\to\OO$ we can define a brace algebra on the underlying module of $\OO$ by setting
\[b_n:\OO(N)\otimes\OO(a_1)\otimes\cdots\otimes\OO(a_n)\to\OO(N-n+\sum a_i)\]

\[b_n(x;x_1,\dots, x_n)=\sum\gamma(x;1,\dots,1,x_1,1,\dots,1,x_n,1,\dots,1),\]
where the sum runs over all possible order-preserving insertions. The brace $b_n(x;x_1,\dots,x_n)$ vanishes whenever $n>N$ and $b_0(x)=x$. The brace relation follows from the associativity axiom of operads.

This construction can  be used to define braces on the suspension $\s\OO$. More precisely, we define maps 
\[b_n:\mathfrak{s}\OO(N)\otimes\mathfrak{s}\OO(a_1)\otimes\cdots\otimes\mathfrak{s}\OO(a_n)\to\mathfrak{s}\OO(N-n+\sum a_i)\]
using the operadic composition $\tilde{\gamma}$ on $\mathfrak{s}\OO$ as

\[b_n(x;x_1,\dots,x_n)=\sum\tilde{\gamma}(x;1,\dots,1,x_1,1,\dots,1,x_n,1,\dots,1).\]

We have the following relation between the brace maps $b_n$ defined on $\s\OO$ and the operadic composition $\gamma$ on $\OO$. As this precise sign is needed in several key places in this article, we provide an explicit calculation.

\begin{propo}\label{bracesign}
For $x\in \s\OO(N)$ and $x_i\in\s\OO(a_i)$ of internal degree $q_i$ ($1\leq i\leq n$), we have
\[b_n(x;x_1,\dots,x_n)=\sum_{N-n=k_0+\cdots+k_n} (-1)^\eta \gamma
(x\otimes 1^{\otimes k_0}\otimes x_1\otimes \cdots\otimes x_n\otimes1^{\otimes k_n}),\]
where 
\[\eta=\sum_{0\leq j<l\leq n}k_jq_l+\sum_{1\leq j<l\leq n}a_jq_l+\sum_{j=1}^n (a_j+q_j-1)(n-j)+\sum_{1\leq j\leq l\leq n} (a_j+q_j-1)k_l.\]
\end{propo}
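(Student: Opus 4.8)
The plan is to unwind the brace
\[b_n(x;x_1,\dots,x_n)=\sum\tilde{\gamma}(x;1,\dots,1,x_1,1,\dots,1,x_n,1,\dots,1)\]
one summand at a time and compare each term of $\tilde{\gamma}$ with the corresponding term of $\gamma$. Each order-preserving insertion is encoded by a tuple $(k_0,\dots,k_n)$ with $k_0+\cdots+k_n=N-n$, where $k_j$ is the number of units placed between $x_j$ and $x_{j+1}$ (with $k_0$ before $x_1$ and $k_n$ after $x_n$); this is exactly the indexing appearing in the statement. Hence it suffices to compute the single sign relating $\tilde{\gamma}(x\otimes 1^{\otimes k_0}\otimes x_1\otimes\cdots\otimes x_n\otimes 1^{\otimes k_n})$ to $\gamma(x\otimes 1^{\otimes k_0}\otimes x_1\otimes\cdots\otimes x_n\otimes1^{\otimes k_n})$, and then sum over all admissible tuples.

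First I would use the diagonal definition of the operad structure on $\s\OO$. By construction $\tilde{\gamma}$ factors as a shuffle isomorphism that gathers all the $\OO$-factors to the left and all the $\Lambda$-factors to the right, followed by $\gamma_\OO\otimes\gamma_\Lambda$, where $\gamma_\Lambda$ is the composition on $\Lambda$ from \eqref{lambdainsertion}. Writing the $N$ slot-fillers uniformly as $z_1,\dots,z_N$ (each either a unit or some $x_j$) of arities $b_1,\dots,b_N$, the total sign then splits as $(-1)^{S+T}$, where $S$ is the Koszul sign of the shuffle and $T$ is the sign produced by $\gamma_\Lambda$. Note that $x$ itself never passes any factor, which explains why the internal degree $\deg(x)$ does not occur in $\eta$.

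For $S$, the Koszul rule records exactly the cost of moving each $\Lambda$-factor past the $\OO$-factors lying to its right: the factor $e^N$ (degree $N-1$) passes all of $z_1,\dots,z_N$, and the factor $e^{b_i}$ (degree $b_i-1$) passes $z_{i+1},\dots,z_N$; the relative order within the $\OO$-factors and within the $\Lambda$-factors is preserved, so these are the only contributions. Since units have internal degree $0$, only the $x_j$ matter, giving $S=(N-1)\sum_{j}q_j+\sum_{j}(a_j-1)\sum_{l>j}q_l$. For $T$, I would evaluate $\gamma_\Lambda(e^N;e^{b_1},\dots,e^{b_N})$ by iterating $\circ_i$ starting from the right-most slot so that the slot indices never shift; the insertion sign of \eqref{lambdainsertion} then accumulates to $T=\sum_{1\le i<l\le N}(b_i-1)b_l$, where again only the $x_j$-slots (on which $b_i-1=a_j-1$) contribute. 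The same two signs can alternatively be read off by iterating \Cref{tilde}, whose exponent $(n-1)|y|+(i-1)(m-1)$ already separates a natural-degree part from an arity part.

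The last step is purely bookkeeping: substituting $b_i=a_j$ on $x_j$-slots and $b_i=1$ on unit slots, translating the per-slot sums $\sum_{l>i}$ into sums over the $x_l$'s together with the intervening blocks of $k_l$ units, and using $N-1\equiv(n-1)+\sum_{i=0}^n k_i\pmod 2$, I would show $S+T\equiv\eta\pmod 2$. I expect this mod-$2$ reconciliation to be the main obstacle, since $\eta$ is written in terms of the natural degrees $a_j+q_j-1$, whereas $S$ and $T$ produce the arity parts $a_j-1$ and the internal degrees $q_j$ separately; the equality holds because the discrepancy collects into even multiples of the form $(\cdots)\cdot 2(1-a_j)$, which vanish modulo $2$, exactly as in the single-insertion check $n=1$, where the difference is $(i-1)\cdot 2(1-a_1)$. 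Summing the resulting equal signs over all tuples $(k_0,\dots,k_n)$ then recovers the claimed formula for $b_n(x;x_1,\dots,x_n)$.
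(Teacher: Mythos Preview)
Your overall strategy is exactly the paper's: split the sign of $\tilde{\gamma}$ into a Koszul shuffle part $S$ (moving the $\Lambda$-factors past the $\OO$-factors) and an operadic part $T$ coming from $\gamma_\Lambda$, then simplify $S+T$ modulo $2$. Your formula for $S$ agrees with the paper's. The gap is in your computation of $T$.

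When you evaluate $\gamma_\Lambda(e^N;e^{b_1},\dots,e^{b_N})$ by iterating $\circ_i$ from the right, you are implicitly applying the composite map to the \emph{reordered} tensor $e^N\otimes e^{b_N}\otimes\cdots\otimes e^{b_1}$; to compare with $\gamma_\Lambda$ on $e^N\otimes e^{b_1}\otimes\cdots\otimes e^{b_N}$ you must first pay the Koszul cost $\sum_{i<l}(b_i-1)(b_l-1)$ of that reordering. Your $T=\sum_{i<l}(b_i-1)b_l$ omits this, and the discrepancy does not vanish: already for $N=2$, $b_1=b_2=2$ one has $\gamma_\Lambda(e^2;e^2,e^2)=(e^2\circ_1 e^2)\circ_3 e^2=-e^4$, whereas your right-to-left iterate $(e^2\circ_2 e^2)\circ_1 e^2=+e^4$. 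In the brace setting the missing term is $\sum_{1\le j<l\le n}(a_j-1)(a_l-1)$, so with your $T$ the identity $S+T\equiv\eta\pmod 2$ fails (take $n=2$, $a_1=a_2=2$). The fix is either to add this reordering sign to $T$, or---as the paper does and as your own alternative via \Cref{tilde} suggests---to iterate the insertions left to right, which requires no input swaps and yields directly $T=\sum_{j=1}^n(a_j-1)\bigl(n-j+\sum_{l\ge j}k_l\bigr)$. With that correction the rest of your bookkeeping goes through and matches the paper.
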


\begin{proof}
To obtain the signs that make $\tilde{\gamma}$ differ from $\gamma$, we must first look at the operadic composition on $\Lambda$. 
We are interested in compositions of the form \[\tilde{\gamma}(x\otimes 1^{\otimes k_0}\otimes x_1\otimes 1^{\otimes k_1}\otimes\cdots\otimes x_n\otimes 1^{\otimes k_n})\] where $N-n=k_0+\cdots+k_n$, $x$ has arity $N$ and each $x_i$ has arity $a_i$ and internal degree $q_i$. Therefore, let us consider the corresponding operadic composition 

\[
\begin{tikzcd}
\Lambda(N)\otimes\Lambda(1)^{k_0}\otimes\Lambda(a_1)\otimes\Lambda(1)^{\otimes k_1}\otimes\cdots\otimes\Lambda(a_n)\otimes\Lambda(1)^{k_n}\arrow[r] & \Lambda(N-n+\sum_{i=1}^na_i).
\end{tikzcd}
\]

The operadic composition can be described in terms of insertions in the obvious way, namely, if $f\in\s\OO(N)$ and $h_1,\dots, h_N\in\s\OO$, then we have

\[\tilde{\gamma}(x;y_1,\dots, y_N)=(\cdots(x\tilde{\circ}_1 y_1)\tilde{\circ}_{1+a(y_1)}y_2\cdots)\tilde{\circ}_{1+\sum a(y_p)}y_N,\]

where $a(y_p)$ is the arity of $y_p$ (in this case $y_p$ is either $1$ or some $x_i$). So we just have to find out the sign iterating the same argument as in the $i$-th insertion. In this case, each $\Lambda(a_i)$ produces a sign given by the exponent $$(a_i-1)(N-k_0+\cdots-k_{i-1}-i).$$ 

For this, recall that the degree of $\Lambda(a_i)$ is $a_i-1$ and that the generator of this space is inserted in the position $1+\sum_{j=0}^{i-1}k_j+\sum_{j=1}^{i-1}a_j$ of a wedge of $N+\sum_{j=1}^{i-1}a_j-i+1$ generators. Therefore, performing this insertion as described in the previous section yields the aforementioned sign. Now, since $N-n=k_0+\cdots+k_n$, we have that
\[(a_i-1)(N-k_0+\cdots+k_{i-1}-i)=(a_i-1)(n-i+\sum_{l=i}^nk_l).\]

Now we can compute the sign factor of a brace. For this, notice that the isomorphism $(\OO(1)\otimes \Lambda(1))^{\otimes k}\cong \OO(1)^{\otimes k}\otimes \Lambda(1)^{\otimes k}$ does not produce any signs because of degree reasons. Therefore, the sign coming from the isomorphism

\begin{align*}
\OO(N)\otimes\Lambda(N)\otimes (\OO(1)\otimes \Lambda(1))^{\otimes k_0}\otimes \bigotimes_{i=1}^n(\OO(a_i)\otimes\Lambda(a_i)\otimes(\OO(1)\otimes\Lambda(1))^{\otimes k_i}\cong\\
 \OO(N)\otimes\OO(1)^{\otimes k_0}\otimes(\bigotimes_{i=1}^n \OO(a_i)\otimes \OO(1)^{\otimes k_i})\otimes \Lambda(N)\otimes\Lambda(1)^{\otimes k_0}\otimes(\bigotimes_{i=1}^n \Lambda(a_i)\otimes \Lambda(1)^{\otimes k_i})
\end{align*}
is determined by the exponent

\[(N-1)\sum_{i=1}^nq_i+\sum_{i=1}^n (a_i-1)\sum_{l>i}q_l.\]

This equals
\[\left(\sum_{j=0}^nk_j +n-1\right)\sum_{i=1}^nq_i+\sum_{i=1}^n (a_i-1)\sum_{l>i}q_l.\]

After doing the operadic composition 
\[\OO(N)\otimes(\bigotimes_{i=1}^n \OO(a_i))\otimes \Lambda(N)\otimes(\bigotimes_{i=1}^n \Lambda(a_i))\longrightarrow \OO(N-n+\sum_{i=1}^na_i)\otimes \Lambda(N-n+\sum_{i=1}^na_i)\]

we can add the sign coming from the suspension, so all in all the sign $(-1)^\eta$ we were looking for is given by

\[\eta=\sum_{i=1}^n(a_i-1)(n-i+\sum_{l=i}^nk_l)+(\sum_{j=0}^nk_j +n-1)\sum_{i=1}^nq_i+\sum_{i=1}^n (a_i-1)\sum_{l>i}q_l.\]

It can be checked that this can be rewritten modulo $2$ as 
\[\eta=\sum_{0\leq j<l\leq n}k_jq_l+\sum_{1\leq j<l\leq n}a_jq_l+\sum_{j=1}^n (a_j+q_j-1)(n-j)+\sum_{1\leq j\leq l\leq n} (a_j+q_j-1)k_l\]
as we stated.
\end{proof}

Note that for $\OO=\End_A$, the brace on operadic suspension from \Cref{bracesign} is precisely $[f,g]=b_1(f;g)-(-1)^{|f||g|}b_1(g;f)$ defined in \cite{RW}.

\bigskip
We are going to define a brace structure on $\Tot(\s\OO)$ using totalization. One defines bigraded braces just like in the single-graded case, only changing the sign $\varepsilon$ in \Cref{braces} to be $\varepsilon=\sum_{p=1}^n\sum_{q=i}^{i_p}\langle x_p,y_q\rangle$ according to the bigraded sign convention.

As one might expect, we can define bigraded brace maps $b_n$ on a bigraded operad $\OO$ and also on its operadic suspension $\s\OO$, obtaining similar signs as in the single-graded case, but with vertical (internal) degrees, see \Cref{bracesign}. 

We can also define braces on $\Tot(\s\OO)$ via operadic composition. In this case, these are usual single-graded braces. More precisely, we define the maps 
\[b^\star_n:\Tot(\mathfrak{s}\OO(N))\otimes \Tot(\mathfrak{s}\OO(a_1))\otimes\cdots\otimes \Tot(\mathfrak{s}\OO(a_n))\to \Tot(\mathfrak{s}\OO(N-\sum a_i))\]
using the operadic composition $\gamma^\star$ on $\Tot(\mathfrak{s}\OO)$ as

\[b^\star_n(x;x_1,\dots,x_n)=\sum\gamma^\star(x;1,\dots,1,x_1,1,\dots,1,x_n,1,\dots,1),\]

where the sum runs over all possible ordering preserving insertions. The brace map $b^\star_n(x;x_1,\dots,x_n)$ vanishes whenever $n>N$ and $b^\star_0(x)=x$. 

Operadic composition can be described in terms of insertions in the obvious way, namely 

\begin{equation}\label{gammastar}
\gamma^\star(x;y_1,\dots,y_N)=(\cdots(x\star_1 y_1)\star_{1+a(y_1)}y_2\cdots)\star_{1+\sum a(y_p)}y_N,
\end{equation}

where $a(y_p)$ is the arity of $y_p$. If we want to express this composition in terms of the composition in $\OO$ we just have to find out the sign factor applying the same strategy as in the single-graded case. In fact, as we said, there is a sign factor that comes from vertical operadic suspension that is identical to the graded case, but replacing internal degree by internal vertical degree. This is the sign that determines the brace $b_n$ on $\s\OO$. Explicitly, it is given by the following lemma, whose proof is identical to that of \Cref{bracesign}.

 \begin{lem}\label{bigradedsign}
For $x\in \s\OO(N)$ and $x_i\in\s\OO(a_i)$ of internal vertical degree $q_i$ ($1\leq i\leq n$), we have
\[b_n(x;x_1,\dots,x_n)=\sum_{N-n=h_0+\cdots+h_n} (-1)^\eta \gamma
(x\otimes 1^{\otimes h_0}\otimes x_1\otimes \cdots\otimes x_n\otimes1^{\otimes h_n}),\]
where 
\[\eta=\sum_{0\leq j<l\leq n}h_jq_l+\sum_{1\leq j<l\leq n}a_jq_l+\sum_{j=1}^n (a_j+q_j-1)(n-j)+\sum_{1\leq j\leq l\leq n} (a_j+q_j-1)h_l.\] \qed
\end{lem}

The other sign factor is produced by totalization, see \Cref{totcomp}. Combining both factors we obtain the following.

\begin{lem}
We have 
\begin{equation}\label{bracetot}
b_j^\star(x;x^1,\dots, x^N)_k=\underset{h_0+h_1+\cdots+h_N=j-N}{\sum_{k_0+k_1+\cdots+k_N=k}}(-1)^{\eta+\sum_{j=1}^m d_j\sum_{i=0}^{j-1}k_i}\gamma(x_{k_0};1^{h_0},x^1_{k_1},1^{h_1},\dots, x^N_{k_N},1^{h_N})
\end{equation}
for $x=(x_k)_k\in\Tot(\s\OO(N))^{d_0}$ and $x^i=(x^i_k)_k\in\Tot(\s\OO(a_i))^{d_i}$, where $\eta$ is defined in \Cref{bigradedsign}. \qed
\end{lem}

\begin{corollary}\label{biliebracket}
 For $\OO = \End_A$, the endomorphism operad of a bigraded module, the brace $b_1^\star(f;g)$ is the operation $f\star g$ defined in \cite{LRW} that induces a Lie bracket. More precisely,
\[
[f,g]=b_1(f;g)-(-1)^{NM}b_1(g;f)
\]
for $f\in\Tot(\s\End_A)^N$ and $g\in\Tot(\s\End_A)^M$, is the same bracket that was defined in \cite{LRW}. \qed
\end{corollary}

Note that in \cite{LRW} the sign in the bracket is $(-1)^{(N+1)(M+1)}$, but this is because their total degree differs by one with respect to ours.

\section{The Classical Deligne Conjecture}\label{sec:classicaldeligne}

In this section we use the previously described brace structures to give a rigorous proof of \Cref{theorem}, which was originally claimed by Gerstenhaber and Voronov \cite{GV}. This leads us to our first new version of the Deligne conjecture,  \Cref{ainftydeligne}.

Let $\OO$ be an operad of graded $R$-modules and $\s\OO$ its operadic suspension. Let us consider the underlying graded module of the operad $\s\OO$, which we call $\s\OO$ again by abuse of notation, i.e. $\s\OO=\prod_n \s\OO(n)$ with grading given by its natural degree  $|x|=n+\deg(x)-1$ for $x\in \s\OO(n)$, where $\deg(x)$ is its internal degree.

Recall from \Cref{rem:ainfinitymult} that an $A_\infty$ multiplication on an operad $\OO$ is a map of operads $A_\infty \longrightarrow \OO$, or, equivalently, an element $m \in \s\OO$ of degree 1 with $m \tilde{\circ} m =0$, where $\tilde{\circ}$ is the operadic insertion in $\s\OO$. 
One can attempt to define an $A_\infty$-algebra structure on $\s\OO$ following \cite{GV} and \cite{getzler} using the maps

\begin{align*}
M'_1(x)\coloneqq [m,x]=m\tilde{\circ} x-(-1)^{|x|}x\tilde{\circ}m, & &  \\
M'_j(x_1,\dots, x_j)\coloneqq b_j(m;x_1,\dots, x_j),& &j>1.
\end{align*}
The prime notation here is used to indicate that these are not the definitive maps that we are going to take. Getzler shows in \cite{getzler} that $M'=M'_1+M'_2+\cdots$ satisfies the relation $M'\circ M'=0$ using that $m\circ m=0$, and the proof is independent of the operad in which $m$ is defined, so it is still valid if $m\tilde{\circ}m=0$. But we have two problems here. The equation $M'\circ M'=0$ does depend on how the circle operation is defined. More precisely, this circle operation in \cite{getzler} is the natural circle operation on the endomorphism operad, which does not have any additional signs, so $M'$ is not an $A_\infty$-structure under our convention. The other problem has to do with the degrees. We need $M'_j$ to be homogeneous of degree $2-j$ as a map $\s\OO^{\otimes j}\to \s\OO$, but we find that $M'_j$ is homogeneous of degree 1 instead, as the following lemma shows.
\begin{lem}\label{lemmadegree}
For $x\in\s\OO$ we have that the degree of the map $b_j(x;-):\s\OO^{\otimes j}\to\s\OO$ of graded modules is precisely $|x|$.
\end{lem}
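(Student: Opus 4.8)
The plan is to avoid the explicit sign $\eta$ of \Cref{bracesign} entirely and instead deduce the degree purely from the fact that operadic composition preserves natural degree. The point is that $\s\OO$ is an operad of graded modules whose grading \emph{is} the natural degree, so its structure map $\tilde{\gamma}$ is a morphism of graded modules, i.e. homogeneous of natural degree $0$. All that remains is to keep track of degrees.

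First I would fix $x\in\s\OO(N)$ of internal degree $\deg(x)$, so that $|x|=\deg(x)+N-1$, together with homogeneous inputs $x_i\in\s\OO(a_i)$ of internal degree $q_i$, so that $|x_i|=q_i+a_i-1$. Recall that $b_j(x;x_1,\dots,x_j)$ is a sum of terms $\tilde{\gamma}(x;1,\dots,1,x_1,1,\dots,1,x_j,1,\dots,1)$, in which the inserted $1$'s are copies of the operadic unit of $\s\OO$. Two observations drive the computation. First, this unit lies in $\s\OO(1)$ and has internal degree $0$, hence natural degree $0+1-1=0$; inserting it therefore does not alter natural degrees. Second, a single insertion $\tilde{\circ}_i\colon\s\OO(n)\otimes\s\OO(m)\to\s\OO(n+m-1)$ sends a pair of internal degrees $p,q$ to internal degree $p+q$ in arity $n+m-1$, so the output has natural degree $(p+q)+(n+m-1)-1=(p+n-1)+(q+m-1)$, which is exactly the sum of the two input natural degrees. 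Iterating insertions, the term $\tilde{\gamma}(x;1,\dots,x_1,\dots,x_j,\dots)$ has natural degree equal to the sum of the natural degrees of its entries, namely $|x|+\sum_i|x_i|$.

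Finally I would compare source and target: the element $x_1\otimes\cdots\otimes x_j$ has natural degree $\sum_i|x_i|$, while its image has natural degree $|x|+\sum_i|x_i|$, so $b_j(x;-)$ raises natural degree by exactly $|x|$, as claimed. I do not anticipate a genuine obstacle; the only care needed is to interpret ``degree'' consistently as natural degree and to confirm that the inserted units carry natural degree $0$. One could alternatively read the degree off the internal-degree and arity counts in the formula of \Cref{bracesign}, noting that $(-1)^\eta$ is irrelevant to the degree of the map, but the argument above is shorter and more conceptual.
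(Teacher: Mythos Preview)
Your proposal is correct and follows essentially the same approach as the paper: both compute the natural degree of $b_j(x;x_1,\dots,x_j)$ by tracking how arity and internal degree combine under operadic composition, arriving at $|x|+\sum_i|x_i|$. The paper does this via the explicit arithmetic $a(b_j(\cdots))+\deg(b_j(\cdots))-1$, while you package the same calculation as ``$\tilde{\gamma}$ has natural degree $0$ and units have natural degree $0$''; the underlying computation is identical.
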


\begin{proof}
This is verified with a direct computation, using that 
the natural degree of $b_j(x;x_1,\dots,x_j)$ for $a(x)\geq j$ ($a(x)$ being the arity of $x$) as an element of $\s\OO$ by definition is

\[|b_j(x;x_1,\dots,x_j)|=a(b_j(x;x_1,\dots,x_j))+\deg(b_j(x;x_1,\dots,x_j))-1.\]
\end{proof}

\begin{corollary}
The maps 
\begin{equation*}
M_j':\s\OO^{\otimes j}\to \s\OO,\, (x_1,\dots, x_j)\mapsto b_j(m;x_1,\dots, x_j)
\end{equation*}
for $j>1$ and the map
\begin{equation*}
M_1':\s\OO\to \s\OO,\, x\mapsto b_1(m;x)-(-1)^{|x|}b_1(m;x)
\end{equation*}
are homogeneous of degree 1. 
\end{corollary}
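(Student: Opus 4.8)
The plan is to deduce the statement directly from \Cref{lemmadegree}, combined with the fact established in \Cref{twisting} that an $A_\infty$-multiplication $m$ is a homogeneous element of $\s\OO$ of natural degree $|m|=1$.

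First, for $j>1$ the map $M_j'=b_j(m;-)$ is precisely the brace with its first slot fixed at $m$. By \Cref{lemmadegree} the map $b_j(x;-):\s\OO^{\otimes j}\to\s\OO$ has degree $|x|$, so specializing to $x=m$ shows that $M_j'$ is homogeneous of degree $|m|=1$. This case needs no further computation.

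Next I would treat $M_1'$, whose two summands I handle separately. The first summand $b_1(m;-)$ is again of degree $1$ by \Cref{lemmadegree} with $x=m$. For the second summand, read correctly as $(-1)^{|x|}b_1(x;m)$, the relevant map is $x\mapsto b_1(x;m)$, that is, the brace with the varying argument in the first slot. The degree computation carried out in the proof of \Cref{lemmadegree} yields the additivity formula $|b_n(x;x_1,\dots,x_n)|=|x|+\sum_i|x_i|$, which is symmetric in the roles of all the arguments; applied here it gives $|b_1(x;m)|=|x|+|m|$, so this map sends an element of degree $|x|$ to one of degree $|x|+1$ and is therefore homogeneous of degree $|m|=1$ as well. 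The prefactor $(-1)^{|x|}$ is merely a scalar depending on the input degree and does not affect homogeneity, so the difference of the two degree-$1$ maps is again homogeneous of degree $1$.

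The only point requiring care—and what I regard as the main subtlety rather than a genuine obstacle—is that \Cref{lemmadegree} is phrased for the brace with its \emph{first} argument held fixed, whereas the second summand of $M_1'$ varies that argument. This gap is closed for free by the additivity formula above, which treats all slots symmetrically and hence computes the degree of the map in either variable. No new estimate is needed, and the corollary follows at once.
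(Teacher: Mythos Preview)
Your proof is correct and follows essentially the same approach as the paper: both invoke \Cref{lemmadegree} directly for $j>1$ and for the first summand of $M_1'$, and both handle the second summand $b_1(x;m)$ by appealing to the additivity formula $|b_n(x;x_1,\dots,x_n)|=|x|+\sum_i|x_i|$ established in the proof of \Cref{lemmadegree}. You are also right to read the second summand as $b_1(x;m)$ rather than the typo $b_1(m;x)$ appearing in the statement.
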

\begin{proof}
For $j>1$ it is a direct consequence of \Cref{lemmadegree}. For $j=1$ this is a computation very similar to that required for \Cref{lemmadegree}. 
\end{proof}

The problem we have encountered with the degrees can be resolved using shift maps as the following proposition shows. Recall that we have shift maps $A\to SA$ of degree 1 given by the identity. 

\begin{propo}\label{ainftystructure}
If $\OO$ is an operad with an $A_\infty$-multiplication $m\in\OO$, then there is an $A_\infty$-algebra structure on the shifted module $S\s\OO$. 
\end{propo}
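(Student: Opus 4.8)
The plan is to cure the degree defect of the maps $M'_j$ by conjugating them with the shift, which simultaneously repairs the sign problem. Recall that the shift $S\colon\s\OO\to S\s\OO$ is the identity on underlying elements and has degree $+1$, with inverse $S^{-1}$ of degree $-1$. I would set
\[
M_j:=S\circ M'_j\circ (S^{-1})^{\otimes j}\colon (S\s\OO)^{\otimes j}\longrightarrow S\s\OO .
\]
Since each $M'_j$ is homogeneous of degree $1$ by \Cref{lemmadegree} (using $|m|=1$, as $m$ has degree $1$ in $\s\OO$ by \Cref{twisting}), the map $M_j$ is homogeneous of degree $1+1-j=2-j$, exactly the degree required of the $j$-th structure map of an $A_\infty$-algebra. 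This disposes of the degree obstruction.

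It then remains to verify the $A_\infty$-equation \eqref{ainftyequation} for the family $\{M_j\}$. The input is Getzler's identity $M'\circ M'=0$, which follows from $m\tilde{\circ} m=0$ and whose proof is insensitive to the ambient operad; written out in components it is a quadratic relation among the maps $M'_{r+1+t}(1^{\otimes r}\otimes M'_s\otimes 1^{\otimes t})$ with $r+s+t=n$. I would substitute $M'_j=S^{-1}\circ M_j\circ S^{\otimes j}$ into this relation and push every shift to the outside of each summand. Because $S$ and $S^{-1}$ are odd, each time an $S^{-1}$ is transported past $M'_s$ or past a flanking block of identities the Koszul rule for composites of tensor products contributes a sign depending on $r$, $s$ and $t$; collecting these contributions should convert Getzler's relation into $\sum_{r+s+t=n}(-1)^{rs+t}M_{r+1+t}(1^{\otimes r}\otimes M_s\otimes 1^{\otimes t})=0$. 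As $S$ and $S^{-1}$ are mutually inverse isomorphisms, the outermost shifts cancel and one is left with precisely \eqref{ainftyequation}.

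The heart of the argument, and the step I expect to be the main obstacle, is this sign reconciliation. Getzler works with the bare circle operation on an endomorphism operad, which carries none of the suspension signs, whereas our operations are built from the suspended composition $\tilde{\circ}$; the two conventions differ exactly by the signs recorded in \Cref{tilde} and \Cref{bracesign}. The shift is the device that absorbs this discrepancy, since conjugating a family of degree-$1$ operations by maps of odd degree is precisely what manufactures the $(-1)^{rs+t}$ pattern of the $A_\infty$-equation. I anticipate that the only genuine work is the Koszul bookkeeping: tracking the sign picked up as each $S^{-1}$ crosses $M'_s$ and its neighbouring identities, and checking that, after also accounting for any arity-dependent signs inherited from operadic suspension (compare the factor $(-1)^{\binom{n}{2}}$ in \Cref{markl} and \eqref{barsigma}), the total agrees with $(-1)^{rs+t}$ for every decomposition $n=r+s+t$.
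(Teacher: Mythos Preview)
Your proposal is essentially correct and the paper itself acknowledges that a direct sign verification along the lines you sketch is possible; however, the paper takes a slicker route that sidesteps the Koszul bookkeeping entirely. Rather than substituting $M'_j=S^{-1}\circ M_j\circ S^{\otimes j}$ into Getzler's relation and chasing the $(-1)^{rs+t}$ by hand, the paper observes that the assignment $\overline{\sigma}(F)=S\circ F\circ (S^{\otimes n})^{-1}$ is an \emph{operad isomorphism} $\End_{\s\OO}\cong \s\End_{S\s\OO}$ (equation~\eqref{barsigma}). Setting $M_j=\overline{\sigma}(M'_j)$, one then gets in a single line
\[
M\tilde{\circ}M=\overline{\sigma}(M')\tilde{\circ}\overline{\sigma}(M')=\overline{\sigma}(M'\circ M')=0,
\]
so $M\in\s\End_{S\s\OO}$ is a degree-$1$ element with $M\tilde{\circ}M=0$, and \Cref{twisting} together with \Cref{multiplicationalgebra} finishes the proof. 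All the sign reconciliation you anticipate as the ``main obstacle'' is thus packaged into the already-established fact that $\overline{\sigma}$ respects operadic composition.

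Two remarks on your formulation. First, your $M_j=S\circ M'_j\circ(S^{-1})^{\otimes j}$ is $\sigma(M'_j)$ rather than $\overline{\sigma}(M'_j)$; these differ by $(-1)^{\binom{j}{2}}$, and $\sigma$ is only a map of graded modules, not of operads, so the one-line argument above does not apply to your choice without the extra binomial sign. You did flag this possibility at the end, and indeed with that correction your maps coincide with the paper's. Second, the paper explicitly notes after \Cref{explicit} that your direct-computation approach works but is ``a lengthy and tedious calculation'', so what you gain in concreteness you pay for in labour; the operad-isomorphism argument buys a conceptual explanation of why the shift produces exactly the right signs.
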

\begin{proof}
As in the proof of \Cref{lemmadegree}, a way to turn $M'_j$ into a map of degree $2-j$ is introducing a grading on $\s\OO$ given by arity plus internal degree (without subtracting 1). This is equivalent to defining an $A_\infty$-algebra structure $M$ on $S\s\OO$ shifting the map $M'=M'_1+M'_2+\cdots$, where $S$ is the shift of graded modules. Therefore, we define $M_j$ to be the map making the following diagram commute.

\[
\begin{tikzcd}
(S\s\OO)^{\otimes j}\arrow[r,"M_j"]\arrow[d, "(S^{\otimes j})^{-1}"'] & S\s\OO\\
\s\OO^{\otimes j}\arrow[r, "M'_j"] & \s\OO\arrow[u,"S"']
\end{tikzcd}
\]

In other words, $M_j=\overline{\sigma}(M'_j)$, where $\overline{\sigma}(F)=S\circ F\circ (S^{\otimes n})^{-1}$ for $F\in\End_{\s\OO}(n)$ is the map inducing an isomorphism $\End_{\s\OO}\cong \s\End_{S\s\OO}$, see \Cref{thm:shift} and \Cref{eq:barsigma}. Since $\overline{\sigma}$ is an operad morphism, for $M=M_1+M_2+\cdots$, we have

\[
M\tilde{\circ}M=\overline{\sigma}(M')\tilde{\circ}\overline{\sigma}(M')=\overline{\sigma}(M'\circ M')=0.
\]

So now we have that $M\in\s\End_{S\s\OO}$ is an element of natural degree 1 concentrated in positive arity such that $M\tilde\circ M=0$. Therefore, as by definition an $A_\infty$-structure on $A$ is the same as a map of operads $\mathcal{A}_\infty \longrightarrow \End_A$, $M$ is the desired $A_\infty$-algebra structure on $S\s\OO$. 
\end{proof}

\begin{remark}
Note that $M$ is defined as an structure map on $S\s\OO$. This kind of shifted operad is called \emph{odd operad} in \cite{ward}. This means that $S\s\OO$ is not an operad anymore, since the associativity relation for graded operads involves signs that depend on the degrees, which are now shifted. 
\end{remark}

We have defined $A_\infty$-structure maps $M_j\in\s\End_{S\s\OO}$. Now we can use the brace structure of the operad $\s\End_{S\s\OO}$ to get a $A_\infty$-algebra structure given by maps
\begin{equation}\label{barmaps}
\overline{M}_j:(S\s\End_{S\s\OO})^{\otimes j}\to S\s\End_{S\s\OO}
\end{equation}
by applying $\overline{\sigma}$ to maps
\[\overline{M}'_j:(\s\End_{S\s\OO})^{\otimes j}\to \s\End_{S\s\OO}\]
defined as
\[
\overline{M}'_j(f_1,\dots,f_j)=\overline{B}_j(M;f_1,\dots, f_j),\,\, j>1,\,\,\,\,\,\,\,
\overline{M}'_1(f)=\overline{B}_1(M;f)-(-1)^{|f|}\overline{B}_1(f;M),
\]
where $\overline{B}_j$ denotes the brace map on $\s\End_{S\s\OO}$.

We now define the Hochschild complex as done by Ward in \cite{ward}.
\begin{defin}
The \emph{Hochschild cochains} of a graded module $A$ are defined to be the graded module $S\s\End_A$. If $(A,d)$ is a cochain complex, then $S\s\End_A$ is endowed with a differential \[\partial(f)=[d,f]=d\circ f -(-1)^{|f|}f\circ d\] where $|f|$ is the natural degree of $|f|$ and $\circ$ is the plethysm operation given by insertions.
\end{defin}
In particular, $S\s\End_{S\s\OO}$ is the module of Hochschild cochains of $S\s\OO$. If $\OO$ has an $A_\infty$-multiplication, then the differential of the Hochschild complex is $\overline{M}_1$ from \Cref{barmaps}.
\begin{remark}
The functor $S\s$ is called the ``oddification'' of an operad in the literature \cite{wardthesis}. 
The reader might find odd to define the Hochschild complex in this way instead of just $\End_A$. The reason is that operadic suspension provides the necessary signs and the extra shift gives us the appropriate degrees. In addition, this definition allows the extra structure to arise naturally instead of having to define the signs by hand. For instance, if we have an associative multiplication $m_2\in\End_A(2)=\Hom(A^{\otimes 2},A)$, the element $m_2$ would not satisfy the equation $m_2\circ m_2=0$ and thus cannot be used to induce a multiplication on $\End_A$ as we did above.
\end{remark}

 A natural question to ask is what relation there is between the $A_\infty$-algebra structure on $S\s\OO$ and the one on $S\s\End_{S\s\OO}$. In \cite{GV} it is claimed that given an operad $\OO$ with an $A_\infty$-multiplication, the map

\[
\OO \to \End_\OO,\, x\mapsto \sum_{n\geq 0}b_n(x;-)
\]
is a morphism of $A_\infty$-algebras. In the associative case, this result leads to the definition of homotopy $G$-algebras, which connects with the classical Deligne conjecture. We are going to adapt the statement of this claim to our context and prove it. This way we will obtain an $A_\infty$-version of homotopy $G$-algebras and consequently an $A_\infty$-version of the Deligne conjecture. Let $\Phi'$ the map defined as above but on $\s\OO$, i.e.
\[
\Phi'\colon\s\OO \to \End_{\s\OO},\, x\mapsto \sum_{n\geq 0}b_n(x;-).
\]
Let $\Phi:S\s\OO\to S\s\End_{S\s\OO}$ the map making the following diagram commute
\begin{equation}\label{Phi}
\begin{tikzcd}
S\s\OO\arrow[rr, "\Phi"]\arrow[d] & & S\s\End_{S\s\OO}\\
\s\OO\arrow[r, "\Phi'"]& \End_{\s\OO}\arrow[r, "\cong"]& \s\End_{S\s\OO}\arrow[u]
\end{tikzcd}
\end{equation}
where the isomorphism $\End_{\s\OO}\cong\s\End_{S\s\OO}$ is given in \Cref{thm:shift}. 

\begin{remark}
We have only used the operadic structure on $\s\OO$ to define an $A_\infty$-algebra structure on $S\s\OO$, so the constructions and results in these sections are valid if we replace $\s\OO$ by any graded module $A$ such that $SA$ is an $A_\infty$-algebra. 
\end{remark}

\begin{thm}\label{theorem}
The map $\Phi$ defined in diagram (\ref{Phi}) above is a morphism of $A_\infty$-algebras, i.e. for all $j\geq 1$ the equation

\[\Phi(M_j)=\overline{M}_j(\Phi^{\otimes j})\]
holds, where the $M_j$ is the $j$-th component of the $A_\infty$-algebra structure on $S\s\OO$ and $\overline{M}_j$ is the $j$-th componnent of the $A_\infty$-algebra structure on $S\s\End_{S\s\OO}$. 
\end{thm}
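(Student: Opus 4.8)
The plan is to evaluate the two sides of $\Phi\circ M_j=\overline{M}_j\circ\Phi^{\otimes j}$ and, using that $\overline{\sigma}$ is an isomorphism of operads, reduce the statement to the brace relation of \Cref{braces} on $\s\OO$. Recall that $M_j=\overline{\sigma}(M'_j)$, that $\overline{M}_j=\overline{\sigma}(\overline{M}'_j)$, and that by diagram (\ref{Phi}) one has $\Phi=S\circ\overline{\sigma}\circ\Phi'\circ S^{-1}$, with $\overline{\sigma}$ the operad isomorphism of \Cref{barsigma}. Since $\Phi$ has degree $0$, no Koszul signs are created when evaluating $\Phi^{\otimes j}$, and since $\overline{\sigma}$ intertwines operadic composition, hence the braces, at both levels, the shift $S$ and the two occurrences of $\overline{\sigma}$ can be cancelled simultaneously on the two sides. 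This reduces the claim to an identity between brace expressions on $\s\OO$, which I unwind next.

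Evaluating both reduced sides, viewed as operations on $\s\OO$, on a tuple $(w_1,\dots,w_p)$: on the left, the arity-$p$ component of $\Phi'(y)=\sum_n b_n(y;-)$ sends $(w_1,\dots,w_p)$ to $b_p(y;w_1,\dots,w_p)$, so taking $y=M'_j(x_1,\dots,x_j)=b_j(m;x_1,\dots,x_j)$ produces $b_p\bigl(b_j(m;x_1,\dots,x_j);w_1,\dots,w_p\bigr)$. On the right, $\overline{M}'_j(\Phi'(x_1),\dots,\Phi'(x_j))=\overline{B}_j(M;\Phi'(x_1),\dots,\Phi'(x_j))$ inserts the operations $\Phi'(x_i)$ into $M=\sum_l b_l(m;-)$ in all order-preserving ways, and evaluating on $(w_1,\dots,w_p)$ distributes each $w_q$ either directly into an outer brace $b_l(m;-)$ or into an inner $\Phi'(x_i)=\sum b_{j_i}(x_i;-)$; the outcome is exactly the sum $\sum(-1)^{\varepsilon}b_l\bigl(m;w_1,\dots,b_{j_1}(x_1;\dots),\dots,b_{j_j}(x_j;\dots),\dots,w_p\bigr)$ on the right-hand side of the brace relation with outer element $m$. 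Hence the two sides coincide by \Cref{braces}, which holds on $\s\OO$ because its braces arise from the associative composition $\tilde{\gamma}$. For $j\geq2$ only the components $M_l=b_l(m;-)$ with $l\geq2$ can absorb $j$ insertions, so the arity-$1$ correction term hidden in $M'_1$ is irrelevant and the argument applies verbatim; the case $j=1$ is the assertion that $\Phi$ is a chain map for the differentials $M_1$ and $\overline{M}_1$, and, carrying the extra summand $b_1(-;m)$ of $M'_1=[m,-]$ and of $\overline{M}'_1=[M,-]$, it again reduces to \Cref{braces} (equivalently, to the pre-Lie identity for $b_1$).

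The main obstacle I expect is the sign bookkeeping in the reduction, not the combinatorial matching of terms, which is transparent once the braces are written out. Concretely, I must verify that the signs produced by the two applications of $\overline{\sigma}$ (at the level of $\s\OO$ and of $\s\End_{S\s\OO}$), by the shift $S$, and by the operadic-suspension signs $\eta$ of \Cref{bracesign} built into each $b_n$ all combine to reproduce exactly the sign $(-1)^{\varepsilon}$ of the brace relation, uniformly over the summation indices. I would organize this by expressing each contribution through the natural degrees $|x_i|$ by means of \Cref{lemmadegree} and \Cref{bracesign}, and then checking that the total exponent agrees modulo $2$ with $\varepsilon=\sum_p\deg(x_p)\sum_q\deg(w_q)$; this is the step where the internal consistency of all our sign conventions is genuinely used.
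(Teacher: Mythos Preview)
Your strategy matches the paper's: peel off the shifts and the operad isomorphism $\overline{\sigma}$, then verify the remaining identity using the brace relation on $\s\OO$. Two refinements are needed to make this go through cleanly. First, as written, $\overline{M}'_j(\Phi'(x_1),\dots,\Phi'(x_j))$ does not type-check, since $\overline{M}'_j$ and $\overline{B}_j$ live on $\s\End_{S\s\OO}$ while $\Phi'(x_i)\in\End_{\s\OO}$. The paper handles this by introducing an intermediate map $\mathcal{M}_j$ on $\End_{\s\OO}$ built from the \emph{unsigned} brace $B_j$ there, and then splitting the reduction into two squares: $\overline{M}'_j\circ\overline{\sigma}^{\otimes j}=\overline{\sigma}\circ\mathcal{M}_j$ (immediate from $\overline{\sigma}$ being an operad isomorphism) and $\Phi'\circ M'_j=\mathcal{M}_j\circ(\Phi')^{\otimes j}$. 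Once you set things up this way, your sign worry in the last paragraph evaporates: the braces $b_n$ on $\s\OO$ already satisfy the brace relation with Koszul sign computed in the natural degree $|x|$, and no further $\eta$-bookkeeping from \Cref{bracesign} enters.

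Second, the case $j=1$ is not the pre-Lie identity and does not go through ``verbatim''. Writing out $\Phi'([m,x])$ versus $[M',\Phi'(x)]$, the terms of type $b_l(m;\dots)$ match by the brace relation exactly as for $j>1$, but the ``insertions in $x$'' side produces three families of terms: from $-(-1)^{|x|}\Phi'(b_1(x;m))$, from the summand $-(-1)^{|x|}b_1(\Phi'(x);m)$ inside $M'_1(\Phi'(x))$, and from $-(-1)^{|x|}B_1(\Phi'(x);M')$. After applying the brace relation to $b_n(b_1(x;m);z_1,\dots,z_n)$ and to $b_1(b_n(x;z_1,\dots,z_n);m)$, one gets cross terms $b_n(x;\dots,b_1(z_i;m),\dots)$ that must cancel against the $M'_1$-part of $B_1(\Phi'(x);M')$, leaving precisely the terms $b_{n+1}(x;\dots,m,\dots)$ and $b_l(x;\dots,b_j(m;\dots),\dots)$ that match the left-hand side. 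This cancellation is the actual content of the $j=1$ case and should be made explicit; the paper devotes a separate subsection to it.
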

\begin{proof}
Let us have a look at the following diagram

\begin{equation}\label{proofdiagram}
\begin{tikzcd}[row sep = 1.5em]
(S\s\OO)^{\otimes j}\arrow[dr,red] \arrow[ddd, "M_j"']\arrow[rrrr, "\Phi^{\otimes j}"]& & & & (S\s\End_{S\s\OO})^{\otimes j}\arrow[ddd,  "\overline{M}_j"]\\
&\s\OO^{\otimes j}\arrow[r,blue, "(\Phi')^{\otimes j}"]\arrow[d, blue, "M'_j"] & (\End_{\s\OO})^{\otimes j}\arrow[r, blue,"\overline{\sigma}^{\otimes j}"] \arrow[d, dashed, "\mathcal{M}_j",blue]& (\s\End_{S\s\OO})^{\otimes j}\arrow[ur,red]\arrow[d, "\overline{M}'_j",blue]& \\
&\s\OO\arrow[r, blue, "\Phi'"]& \End_{\s\OO} \arrow[r, blue, "\overline{\sigma}"] & \s\End_{S\s\OO}\arrow[dr,red]& \\
S\s\OO\arrow[rrrr,  "\Phi"']\arrow[ur,red]& & & & S\s\End_{S\s\OO}
\end{tikzcd}
\end{equation}

where the diagonal red arrows are shifts of graded $R$-modules. We need to show that the diagram defined by the external black arrows commutes. But these arrows are defined so that they commute with the red and blue arrows, so it is enough to show that the inner blue diagram commutes. The blue diagram can be split into two different squares using the dashed arrow $\mathcal{M}_j$ that we are going to define next, so it will be enough to show that the two squares commute. 

 The map 
$\mathcal{M}_j:(\End_{\s\OO})^{\otimes j}\to\End_{\s\OO}$
is defined by 
\begin{align*}
&\mathcal{M}_j(f_1, \dots, f_j)=B_j(M';f_1,\dots, f_j) &\text{ for }j>1,\\
&\mathcal{M}_1(f)=B_1(M';f)-(-1)^{|f|}B_1(f;M'),
\end{align*}
 where $B_j$ is the natural brace structure map on the operad $\End_{\s\OO}$, i.e. for $f\in\End_{\s\OO}(n)$, 
\[B_j(f;f_1,\dots, f_j)=\sum_{k_0+\cdots+k_j=n-j} f(1^{\otimes k_0}\otimes f_1\otimes 1^{\otimes k_1}\otimes\cdots\otimes f_j\otimes 1^{\otimes k_j}).\]
 The $1$'s in the braces are identity maps. In the above definition, $|f|$ denotes the degree of $f$ as an element of $\End_{\s\OO}$, which is the same as the degree $\overline{\sigma}(f)\in \s\End_{S\s\OO}$ because $\overline{\sigma}$ is an isomorphism, as mentioned in \Cref{barsigma}.  
 
 The inner square of diagram (\ref{proofdiagram}) is divided into two halves, so we divide the proof into two as well, showing the commutativity of each half independently.

 \subsection*{\centering{Commutativity of the right blue square} }
 Let us show now that the right square commutes. Recall that $\overline{\sigma}$ is an isomorphism of operads and $M=\overline{\sigma}(M')$. Then we have for $j>1$
 \[\overline{M}'_j(\overline{\sigma}(f_1),\dots,\overline{\sigma}(f_j))=\overline{B}_j(M;\overline{\sigma}(f_1),\dots,\overline{\sigma}(f_j))=\overline{B}_j(\overline{\sigma}(M');\overline{\sigma}(f_1),\dots,\overline{\sigma}(f_j)).\]
 
 Now, since the brace structure is defined as an operadic composition, it commutes with $\overline{\sigma}$, so
 \[\overline{B}_j(\overline{\sigma}(M');\overline{\sigma}(f_1),\dots,\overline{\sigma}(f_j))=\overline{\sigma}(B_j(M';f_1,\dots, f_j))=\overline{\sigma}(\mathcal{M}_j(f_1,\dots, f_j)),\]
 and therefore the right blue square commutes for $j>1$. For $j=1$ the result follows analogously.

The proof that the left blue square commutes consists of several lengthy calculations so we are going to devote the next section to that. However, it is worth noting that the commutativity of the left square does not depend on the particular operad $\s\OO$, so it is still valid if $m$ satisfies $m\circ m=0$ for any circle operation defined in terms of insertions. This is essentially the original statement in \cite{GV}.
\subsection*{\centering{Commutativity of the left blue square}}
We are going to show here that the left blue square in diagram (\ref{proofdiagram}) commutes, i.e. that 

\begin{equation}\label{commutative}
\Phi'(M'_j)=\mathcal{M}_j((\Phi')^{\otimes j})
\end{equation}

for all $j\geq 1$. First we prove the case $j>1$. Let $x_1,\dots, x_j\in \s\OO^{\otimes j}$. We have on the one hand

\begin{align*}
\Phi'(M'_j(x_1,\dots, x_j))& = \Phi'(b_j(m;x_1,\dots, x_j))=\sum_{n\geq 0} b_n(b_j(m;x_1,\dots, x_j);-)\\
&=\sum_n\sum_l\sum b_l(m; -, b_{i_1}(x_1;-),\cdots,b_{i_j}(x_j;-),-)
\end{align*}
where $l=n-(i_1+\cdots+i_j)+j$. The sum with no subindex runs over all the possible order-preserving insertions. Note that $l\geq j$. Evaluating the above map on elements would yield Koszul signs coming from the brace relation. Also recall from \Cref{lemmadegree} that $|b_j(x;-)|=|x|$. Now, fix some value of $l\geq j$ and let us compute the $M'_l$ component of

\begin{align*}
\mathcal{M}_j(\Phi'(x_1),\dots, \Phi'(x_j))=B_j(M';\Phi'(x_1),\dots, \Phi'(x_j))
\end{align*}

that is, $B_j(M'_l;\Phi'(x_1),\dots, \Phi'(x_j))$. By definition, this equals

\begin{align*}
\sum M'_l(-,\Phi'(x_1),\cdots, \Phi'(x_j),-)=&\sum_{i_1,\dots, i_j}\sum M'_l(-,b_{i_1}(x_1;-),\cdots,b_{i_j}(x_j;-),-)\\
=&\sum_{i_1,\dots, i_j}\sum b_l(m;-,b_{i_1}(x_1;-),\cdots,b_{i_j}(x_j;-),-).
\end{align*}

We are using hyphens instead of $1$'s to make the equality of both sides of the \Cref{commutative} more apparent, and to make clear that when evaluating on elements those are the places where the elements go. 

For each tuple $(i_1,\dots, i_j)$ we can choose $n$ such that $n-(i_1+\cdots+i_j)+j=l$, so the above sum equals

\[\underset{\mathclap{n-(i_1+\cdots+i_j)+j=l}}{\sum_{n,i_1,\dots, i_j}}\sum b_l(m;-,b_{i_1}(x_1;-),\cdots,b_{i_j}(x_j;-),-).\]

So each $M'_l$ component for $l\geq j$ produces precisely the terms $b_l(m;\dots)$ appearing in $\Phi'(M'_j)$. Conversely, for every $n\geq 0$ there exists some tuple $(i_1,\dots, i_j)$ and some $l\geq j$ such that $n$ is the that $n-(i_1+\cdots+i_j)+j=l$, so we do get all the summands from the left hand side of \Cref{commutative}, and thus we have the equality $\Phi'(M'_j)=\mathcal{M}_j((\Phi')^{\otimes j})$ for all $j>1$.

It is worth treating the case $n=0$ separately since in that case we have the summand $b_0(b_j(m;x_1,\dots, x_j))$
in $\Phi'(b_j(m;x_1,\dots, x_j))$, where we cannot apply the brace relation. This summand is equal to \[B_j(M'_j;b_0(x_1),\dots, b_0(x_j))=M'_j(b_0(x_1),\dots, b_0(x_j))=b_j(m;b_0(x_1),\dots, b_0(x_j)),\] since by definition $b_0(x)=x$.

Now we are going to show the case $j=1$, that is

\begin{equation}\label{case1}
\Phi'(M'_1(x))=\mathcal{M}_1(\Phi'(x)).
\end{equation} 

This is going to be divided into two parts, since $M'_1$ has two clearly distinct summands, one of them consisting of braces of the form $b_l(m;\cdots)$ (insertions in $m$) and another one consisting of braces of the form $b_l(x;\cdots)$ (insertions in $x$). We will therefore show that both types of braces cancel on each side of \Cref{case1}.

\subsubsection*{Insertions in $m$}

Let us first focus on the insertions in $m$ that appear in \Cref{case1}. Recall that 

\begin{equation}\label{phim}
\Phi'(M'_1(x))=\Phi'([m,x])=\Phi'(b_1(m;x))-(-1)^{|x|}\Phi'(b_1(x;m))
\end{equation}

so we focus on the first summand

\begin{align*}
\Phi'(b_1(m;x))=&\sum_n b_n(b_1(m;x);-)=\sum_n \underset{n\geq i}{\sum_i} \sum b_{n-i+1}(m;-, b_i(x;-),-)\\
=&\smashoperator[r]{\sum_{\substack{n,i\\n-i+1> 0}}}\sum b_{n-i+1}(m;-, b_i(x;-),-)
\end{align*}

where the sum with no indices runs over all the positions in which $b_i(x;-)$ can be inserted (from $1$ to $n-i+1$ in this case).

On the other hand, since $|\Phi'(x)|=|x|$, the right hand side of \Cref{case1} becomes

\begin{equation}\label{mphi}
\mathcal{M}_1(\Phi'(x))=B_1(M';\Phi'(x))-(-1)^{|x|}B_1(\Phi'(x);M').
\end{equation}

Again, we are focusing now on the first summand, but with the exception of the part of $M_1'$ that corresponds to $b_1(\Phi(x);m)$. From here the argument is a particular case of the proof for $j>1$, so the terms of the form $b_l(m;\cdots)$ are the same on both sides of the \Cref{case1}. 

\subsubsection*{Insertions in $x$}

And now, let us study the insertions in $x$ that appear in \Cref{case1}. We will check that insertions in $x$ from the left hand side and right hand side cancel. Let us look first at the left hand side. From $\Phi'(M'_1(x))$ in \Cref{phim} we had 

\[-(-1)^{|x|}\Phi'(b_1(x;m))=-(-1)^{|x|}\sum_n b_n(b_1(x;m);-).\]

The factor $-(-1)^{|x|}$ is going to appear everywhere, so we may cancel it. Thus we just have

\[\Phi'(b_1(x;m))=\sum_n b_n(b_1(x;m);-).\]
We are going to evaluate each term of the sum, so let $z_1,\dots, z_n\in \s\OO$. We have by the brace relation that

\begin{align}\label{insertionx1}
b_n(b_1(x;m);z_1,\dots, z_n)=\sum_{l+j=n+1}\smashoperator[l]{\sum_{i=1}^{n-j+1}}&(-1)^{\varepsilon} b_l(x;z_1,\dots,b_j(m;z_{i},\dots, z_{i+j}),\dots, z_n)\nonumber\\
 &+\sum_{i=1}^{n+1}(-1)^{\varepsilon}b_{n+1}(x;z_1,\dots, z_{i-1},m,z_i,\dots, z_n),
\end{align}

where $\varepsilon$ is the usual Koszul sign with respect to the grading in $\s\OO$. We have to check that the insertions in $x$ that appear in $\mathcal{M}_1(\Phi'(x))$ (right hand side of the \cref{case1}) are exactly those in \Cref{insertionx1} above (left hand side of \cref{case1}).

Therefore let us look at the right hand side of \Cref{case1}. Here we will study the cancellations from each of the two summands that naturally appear. From \Cref{mphi}, i.e. $\mathcal{M}_1(\Phi'(x))=B_1(M';\Phi'(x))-(-1)^{|x|}B_1(\Phi'(x);M')$  we have 
\[-(-1)^{|x|}b_1(\Phi'(x);m)=-(-1)^{|x|}\sum_n b_1(b_n(x;-);m)\] 
coming from the first summand since $B_1(M'_1;\Phi'(x))=M'_1(\Phi'(x))$. We are now only interested in insertions in $x$. Again, cancelling $-(-1)^{|x|}$ we get
\[b_1(\Phi'(x);m)=\sum_n b_1(b_n(x;-);m).\] 
Each term of the sum can be evaluated on $(z_1,\dots, z_n)$ to produce

\begin{align}\label{insertionx2}
b_1(b_n(x;z_1, \dots, z_n);m)&=\\
\sum_{i=1}^n (-1)^{\varepsilon+|z_i|}b_n(x;z_1,\dots, b_1(z_i;m),\dots, z_n)&+\sum_{i=1}^{n+1} (-1)^{\varepsilon}b_{n+1}(x;z_1,\dots, z_{i-1},m,z_{i},\dots, z_n)\nonumber
\end{align}

Note that we have to apply the Koszul sign rule twice: once at evaluation, and once more to apply the brace relation. Now, from the second summand of $\mathcal{M}_1(\Phi'(x))$ in the right hand side of \cref{mphi}, after cancelling $-(-1)^{|x|}$ we obtain

\begin{align*}
B_1(\Phi'(x);M')=&\sum_l B_1(b_l(x;-);M')=\sum_l\sum b_l(x;-,M',-) \\
=&\left(\sum_{j> 1} \sum_l\sum b_l(x;-,b_j(m;-),-)+\sum_l\sum b_l(x;-,b_1(-;m),-)\right).
\end{align*}
We are going to evaluate on $(z_1,\dots, z_n)$ to make this map more explicit, giving us
 
 \begin{align}\label{lhs2}
 \sum_{l+j=n+1}\sum_{i=1}^{n-j+1}(-1)^{\varepsilon} b_l(x;z_1,\dots,b_j(m;z_{i},\dots, z_{i+j}),\dots, z_n) -\sum_{i=1}^{n} (-1)^{\varepsilon+|z_i|}b_n(x;z_1,\dots,b_1(z_{i};m),\dots, z_n).
 \end{align}
 
 The minus sign comes from the fact that $b_1(z_i;m)$ comes from $M'_1(z_i)$, so we apply the signs in the definition of $M'_1(z_i)$. We therefore have that the right hand side of \cref{mphi} is the result of adding equations (\ref{insertionx2}) and (\ref{lhs2}). After this addition we can see that the first sum of \cref{insertionx2} cancels the second sum of \cref{lhs2}. 

 We also have that the second sum in \cref{insertionx2} is the same as the second sum in \cref{insertionx1}, so we are left with only the first sum of \cref{lhs2}. This is the same as the first sum in \cref{insertionx1}, so we have already checked that the equation $\Phi'(M'_1)=\mathcal{M}_1(\Phi')$ holds. 
  
 In the case $n=0$, we have to note that $B_1(b_0(x);m)$ vanishes because of arity reasons: $b_0(x)$ is a map of arity 0, so we cannot insert any inputs. This finishes the proof.
 \end{proof}

We have given an implicit definition of the components of the $A_\infty$-algebra structure on $S\s\OO$, namely, \[M_j=\overline{\sigma}(M'_j)=(-1)^{\binom{j}{2}}S\circ M'_j\circ(S^{-1})^{\otimes j},\]
but it is useful to have an explicit expression that determines how it is evaluated on elements of $S\s\OO$. We will need these explicit expressions to describe $J$-algebras, which are $A_\infty$-version of homotopy $G$-algebras. This way we can state the $A_\infty$-Deligne conjecture in a more precise way. These explicit formulas will also clear up the connection with the work of Gerstenhaber and Voronov. We hope that these explicit expressions can be useful to perform calculations in other mathematical contexts where $A_\infty$-algebras are used.

The following can be shown using a straightforward sign calculation.
\begin{lem}\label{explicit}
For $x,x_1,\dots,x_n\in\s\OO$, we have the following expressions.

\begin{align*}
&M_n(Sx_1,\dots, Sx_n)=(-1)^{\sum_{i=1}^n(n-i)|x_i|}Sb_n(m;x_1,\dots, x_n) & & n>1,\\
&M_1(Sx)=Sb_1(m;x)-(-1)^{|x|}Sb_1(x;m).
\end{align*}

Here $|x|$ is the degree of $x$ as an element of $\s\OO$, i.e. the natural degree.  \qed
\end{lem}

Now that we have the explicit formulas for the $A_\infty$-structure on $S\s\OO$ we can state and prove an $A_\infty$-version of the Deligne conjecture. Let us first re-adapt the definition of homotopy $G$-algebra from \cite[Definition 2]{GV} to our conventions.

\begin{defin}\label{homotopygalgebras}
A \emph{homotopy $G$-algebra} is differential graded algebra $V$ with a differential $M_1$ and a product $M_2$ such that the shift $S^{-1}V$ is a brace algebra with brace maps $b_n$. The product differential and the product must satisfy the following compatibility identities. Let $x,x_1,x_2,y_1,\dots, y_n\in S^{-1}V$. We demand 
\begin{align*}
Sb_n(&S^{-1}M_2(Sx_1,Sx_2);y_1,\dots, y_n) = 
&\sum_{k=0}^n (-1)^{(|x_2|+1)\sum_{i=1}^k|y_i|}M_2(b_k(x_1;y_1,\dots, y_k),b_{n-k}(x_2;y_{k+1},\dots, y_n))
\end{align*}
and
\begin{align*}
S&b_n(S^{-1}M_1(Sx);y_1,\dots, y_n)-M_1(Sb_n(x;y_1,\dots,y_n))
-(-1)^{|x|+1}\sum_{p=1}^n(-1)^{\sum_{i=1}^p|y_i|}Sb_n(x;y_1,\dots,M_1(Sy_p),\dots, y_n)\\
=&-(-1)^{(|x|+1)|y_1|}M_2(Sy_1,Sb_{n-1}(x;y_2,\dots, y_n))\\
 &+(-1)^{|x|+1}\sum_{p=1}^{n-1}(-1)^{n-1+\sum_{i=1}^p|y_i|}Sb_{n-1}(x;y_1,\dots,M_2(Sy_p,Sy_{p+1}),\dots y_n)\\
 &-(-1)^{|x|+\sum_{i=1}^{n-1}|y_i|}M_2(Sb_{n-1}(x;y_1,\dots, y_{n-1}),Sy_n).
\end{align*}
\end{defin}

Notice that our signs are slightly different to those in \cite{GV} as a consequence of our conventions. Our signs will be a particular case of those in \Cref{Jalgebras}, which are set so that \Cref{ainftydeligne} holds in consistent way with operadic suspension and all the shifts that the authors of \cite{GV} do not consider.

We now introduce $J$-algebras as an $A_\infty$-generalization of homotopy $G$-algebras. This will allow us to generalize the Deligne conjecture to the $A_\infty$-setting. 

\begin{defin}\label{Jalgebras}
A \emph{$J$-algebra} $V$ is an $A_\infty$-algebra with structure maps $\{M_j\}_{j\geq 1}$ such that the shift $S^{-1}V$ is a brace algebra. Furthermore, the braces and the $A_\infty$-structure satisfy the following compatibility relations. Let $x, x_1,\dots, x_j, y_1,\dots, y_n\in S^{-1}V$. For $n\geq 0$ we demand 

\begin{align*}
(-1)^{\sum_{i=1}^n(n-i)|y_i|}Sb_n(S^{-1}&M_1(Sx);y_1,\dots, y_n)=\\
&\underset{\mathclap{1\leq i_1\leq n-k+1}}{\sum_{\mathclap{l+k-1=n}}}(-1)^{\varepsilon}M_l(Sy_1,\dots, Sb_{k}(x;y_{i_1},\dots),\dots, Sy_n)\\
-(-1)^{|x|}\underset{\mathclap{1\leq i_1\leq n-k+1}}{\sum_{\mathclap{l+k-1=n}}}&(-1)^{\eta} Sb_k(x;y_1,\dots, S^{-1}M_l(Sy_{i_1},\dots,), \dots, y_n),
\end{align*}
where
\begin{align*}
\varepsilon = &\sum_{v=1}^{i_1-1}|y_v|(|x|-k+1)+\sum_{v=1}^{k}|y_{i_1+v-1}|(k-v)+(l-i_1)|x|.
\end{align*}
and
\begin{align*}
\eta=& \sum_{v=1}^{i_1-1}(k-v)|y_v|+\sum_{v=1}^{i_1-1}l|y_v|+\sum_{v=i_1}^{i_1+l-1}(k-i_1)|y_v|+\sum_{v=i_1}^{n-l}(k-v)|y_{v+l}|.
\end{align*}
For $j>1$ we demand
\begin{align*}
&(-1)^{\sum_{i=1}^n(n-i)|y_i|}Sb_n(S^{-1}M_j(Sx_1,\dots, Sx_j);y_1,\dots, y_n)=\\
&\sum(-1)^{\varepsilon}M_l(Sy_1,\dots, Sb_{k_1}(x_1;y_{i_1},\dots),\dots, Sb_{k_j}(x_j;y_{i_j},\dots),\dots, Sy_n).
\end{align*}
The unindexed sum runs over all possible choices of non-negative integers that satisfy $l+k_1+\cdots+k_j-j=n$ and over all possible ordering-preserving insertions. The right hand side sign is given by
\begin{align*}
\varepsilon =& \underset{1\leq v\leq k_t}{\sum_{\mathclap{1\leq t\leq j}}} |y_{i_t+v-1}|(k_t-v)+\sum_{\mathclap{1\leq v< l\leq j}}k_v|x_l|+\sum_{\mathclap{1\leq v\leq l\leq j}} |x_v|(i_{l+1}-i_l-k_l)\\
&+\underset{\mathclap{i_t\leq v< i_{t+1}}}{\sum_{\mathclap{0\leq t< l\leq j}}}(|y_v|+1)(|x_l|-k_l+1)+\sum_{\mathclap{0\leq v<l\leq j}}(i_{v+1}-i_v-k_v)(|x_l|-k_l+1)\
\end{align*}
In the sums we are setting $i_0=0$ and $i_{j+1}=n+1$.
\end{defin}

With this in place, we can now show one of our main results.

\begin{corollary}[The $A_\infty$-Deligne conjecture]\label{ainftydeligne}
If $A$ is an $A_\infty$-algebra, then its Hochschild complex $S\s\End_A$ is a $J$-algebra.
\end{corollary}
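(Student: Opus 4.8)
The plan is to exhibit the three pieces of structure demanded by \Cref{Jalgebras} on $V=S\s\End_A$ and then to recognize the compatibility relations as the explicit, evaluated form of the morphism identity proved in \Cref{theorem}. First I would set $\OO=\End_A$, so that by \Cref{multiplicationalgebra} the $A_\infty$-algebra structure on $A$ is the same datum as an $A_\infty$-multiplication $m\in\OO$, and the Hochschild complex of $A$ is $S\s\End_A=S\s\OO$. Two of the three ingredients are then immediate: the $A_\infty$-algebra structure $\{M_j\}$ on $V=S\s\OO$ is furnished by \Cref{ainftystructure}, and the brace algebra structure on $S^{-1}V=\s\OO$ is the one built in \Cref{sectionbraces}, whose brace relation follows from operadic associativity. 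It remains only to verify the compatibility identities relating the $M_j$ to the braces $b_n$.

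For these I would evaluate the morphism equation $\Phi\circ M_j=\overline{M}_j\circ\Phi^{\otimes j}$ of \Cref{theorem} on a tuple $(Sy_1,\dots,Sy_n)$, regarding both sides as elements of $S\s\End_{S\s\OO}$, i.e. as maps out of $(S\s\OO)^{\otimes n}$. On the left, unwinding $\Phi$ through diagram~(\ref{Phi}) turns $\Phi\big(M_j(Sx_1,\dots,Sx_j)\big)$ into $S\overline{\sigma}\big(\Phi'(S^{-1}M_j(Sx_1,\dots,Sx_j))\big)$; since $\Phi'(z)=\sum_n b_n(z;-)$, evaluating on $(Sy_1,\dots,Sy_n)$ reproduces, up to the shift sign computed as in \Cref{explicit}, exactly the term $Sb_n(S^{-1}M_j(Sx_1,\dots,Sx_j);y_1,\dots,y_n)$ weighted by $(-1)^{\sum_i(n-i)|y_i|}$, which is the left-hand side of the $J$-relation. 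On the right, $\overline{M}_j$ is assembled from the brace $\overline{B}_j$ on $\s\End_{S\s\OO}$ applied to $M=\overline{\sigma}(M')$, while each $\Phi(Sx_i)$ supplies the family $b_{k_i}(x_i;-)$; inserting these into $M'_l=b_l(m;-)$ and then feeding in the remaining $y$'s produces precisely the sum $\sum(-1)^{\varepsilon}M_l(Sy_1,\dots,Sb_{k_1}(x_1;y_{i_1},\dots),\dots,Sb_{k_j}(x_j;y_{i_j},\dots),\dots,Sy_n)$ on the right-hand side of \Cref{Jalgebras}. The case $j=1$ I would treat separately, because $M_1(Sx)=Sb_1(m;x)-(-1)^{|x|}Sb_1(x;m)$ carries the extra summand $b_1(x;m)$; this is exactly the split into ``insertions in $m$'' and ``insertions in $x$'' already performed in the proof of \Cref{theorem}, and it accounts for the second sum and the exponent $\eta$ in the $j=1$ relation.

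The main obstacle is entirely the sign bookkeeping. The identities of \Cref{Jalgebras} carry the explicit exponents $\varepsilon$ and $\eta$, and the real task is to confirm that these are reproduced by combining three sources of signs: the shift and $\overline{\sigma}$ sign coming from \Cref{explicit} (in particular the prefactor $(-1)^{\sum_i(n-i)|y_i|}$ produced when the image of $\Phi$ is evaluated on shifted inputs), the Koszul signs arising from the brace relation of \Cref{braces}, and the intrinsic signs of the brace structure on $\s\End_{S\s\OO}$ quantified in \Cref{bracesign}. Conceptually no new idea is needed beyond \Cref{theorem}: since the commutativity of the inner diagram there is independent of the particular operad, the very same computation verifies the present compatibility relations, and the specialization $\OO=\End_A$ contributes nothing further. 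Thus, once the exponents are matched term by term, $S\s\End_A$ satisfies all the axioms of \Cref{Jalgebras} and is a $J$-algebra.
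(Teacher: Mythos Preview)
Your proposal is correct and follows essentially the same approach as the paper: set $\OO=\End_A$, obtain the brace structure on $\s\OO$ from its operad structure and the $A_\infty$-structure on $S\s\OO$ from \Cref{ainftystructure}, and then derive the compatibility relations by evaluating the morphism identity of \Cref{theorem} and tracking the signs. The paper's own proof is terser, simply pointing to \Cref{theorem} and the sign rules in \Cref{koszulsigns} and declaring the rest a direct computation, whereas you spell out more of how that computation proceeds; but the strategy is the same.
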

\begin{proof}
We know that $\s\End_A$ is a brace algebra as it is an operad. Since $A$ is an $A_\infty$-algebra, the structure map $m=m_1+m_2+\cdots$ determines an $A_\infty$-multiplication $m\in\s\End_A$. It follows by \Cref{ainftystructure} that $S\s\End_A$ is an $A_\infty$-algebra. Therefore, we need to show the compatibility relations. The result follows by direct computation from \Cref{theorem}, expanding the definitions and taking into account the Koszul sign rule. 
\end{proof}

\section{The derived $A_\infty$-structure on an operad}\label{sec:derivedstructure}

 In this section we finally establish the connection between classical and derived $A_\infty$-algebras. In \Cref{derivedmaps} we are able to obtain explicit derived $A_\infty$-maps on $A=S\s\OO$ for a sufficiently bounded operad $\OO$ with a derived $A_\infty$-multiplication. This opens the door to the formulation and proof on a new version of the Deligne conjecture in \Cref{dainftydeligne}.

We begin by stating one of the key ingredients.

\begin{propo}\label{whitehouse}
Let $(A, d^A) \in\tc^b$ be a twisted complex horizontally bounded on the right and $A$ its underlying
cochain complex. We have natural bijections 
\begin{align*}
\Hom_{\mathrm{vbOp},d^A}(d\calA_\infty,\End_A) &\cong
\Hom_{\mathrm{vbOp}}(\calA_\infty, \uEnd_A)\\
&\cong \Hom_{\mathrm{vbOp}}(\calA_\infty, \uEnd_{\Tot(A)})\\
&\cong \Hom_{\mathrm{fCOp}}(\calA_\infty,\underline{\End}_{\Tot(A)}),
\end{align*}
where $\vbOp$ and $\fCOp$ denote the categories of operads in $\vbc$ and $\fc$ respectively, and $\Hom_{\vbOp,d^A}$
denotes the subset of morphisms which send $\mu_{i1}$ to $d^A_i$. We view $\mathcal{A}_\infty$ as an operad in $\vbc$ sitting in
horizontal degree zero or as an operad in filtered complexes with trivial filtration.
\end{propo}

\begin{proof}
This follows from the proof of  \cite[Proposition 4.55]{whitehouse} adapted to our case, which shows that there is a derived $A_\infty$-structure on $A=S\s\OO$, see \Cref{derivedmultiplication}. We refer the reader to \Cref{sec:bigraded}, \Cref{sec:total} and \Cref{sec:enrichment} to recall the definitions of the categories used. 
\end{proof}

\begin{remark}\label{boundednessremark}
According to \Cref{filterversion}, the last isomorphism can be replaced by 
\[\Hom_{\mathrm{vbOp}}(\calA_\infty, \uEnd_{\Tot(A)})\cong \Hom_{\mathrm{COp}}(\calA_\infty,\End_{\Tot(A)}),\]
where $\mathrm{COp}$ is the category of operads in cochain complexes. 
\end{remark}
There are several important assumptions to make in order to use \Cref{whitehouse}. First of all, we need $A$ to be horizontally bounded on the right, meaning that there exists some integer $i$ such that $A_k^{d-k}=0$ for all $k>i$. In our case, $A=S\s\OO$ for $\OO$ an operad with a derived $A_\infty$-multiplication, so being horizontally bounded on the right implies that for each $j>0$ we can only have finitely many non-zero components $m_{ij}$. This situation happens in practice in all known examples of derived $A_\infty$-algebras so far, some of them are in \cite[Remark 6.5]{muro}, \cite{RW}, and \cite[\S 5]{women}. Under this assumption we may replace all direct products by direct sums.

We also need to provide $A$ with a twisted complex structure. The reason for this is that \Cref{whitehouse} uses the definition of derived $A_\infty$-algebras on an underlying twisted complex, see \Cref{equivalent}. We show explicitly the existence of a twisted complex structure on an operad with derived $A_\infty$-multiplication in \Cref{twistedoperad}, but it also follows from \Cref{mi1}. We also provide another version of this theorem that works for bigraded modules, \Cref{alternative}. 

With these assumption, by \Cref{whitehouse} we can guarantee the existence of a derived $A_\infty$-algebra structure on $A$ provided that $\Tot(A)$ has an $A_\infty$-algebra structure. Note that we abuse of notation and identify $x_1\otimes\cdots\otimes x_j$ with an element of $\Tot(A^{\otimes j})$ with only one non-zero component. For a general element, extend linearly.

\begin{thm}\label{derivedmaps}
Let $A=S\s\OO$ where $\OO$ is an operad horizontally bounded on the right with a derived $A_\infty$-multiplication $m=\sum_{ij}m_{ij}\in\OO$. Let $x_1\otimes\cdots\otimes x_j\in (A^{\otimes j})^{d-k}_k$ and let $x_v = Sy_v$ for $v=1,\dots, j$ and $y_v$ be of bidegree $(k_v,d_v-k_v)$. The following maps $M_{ij}$ for $j\geq 2$ determine a derived $A_\infty$-algebra structure on $A$.
\[M_{ij}(x_1,\dots,x_j)= (-1)^{\sum_{v=1}^j(j-v)(d_v-k_v)}\sum_lSb_j(m_{il};y_1,\dots, y_j). \]
\end{thm}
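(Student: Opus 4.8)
The plan is to produce the $A_\infty$-structure on the totalization $\Tot(A)$ first, transport it to a derived $A_\infty$-structure on $A$ via the bijections of \Cref{whitehouse}, and only then read off the explicit bigraded components. First I would record that, by \Cref{mstar}, a derived $A_\infty$-multiplication on $\OO$ is exactly an element $m=\sum_{i,l}m_{il}\in\Tot(\s\OO)$ of degree $1$ concentrated in positive arity with $m\star m=0$, where $m_{il}$ denotes the arity-$l$, horizontal-degree-$i$ component. Using the operad isomorphism $\Tot(\s\OO)\cong\s\Tot(\OO)$ of \Cref{extrasign}, this says precisely that the image of $m$ is an $A_\infty$-multiplication on the single-graded operad $\Tot(\OO)$ in the sense of \Cref{twisting}. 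Hence the construction of \Cref{ainftystructure} applies and endows $\Tot(A)=S\Tot(\s\OO)$ with an $A_\infty$-algebra structure whose maps are, for $j>1$, given by \Cref{explicit}:
\[\mathcal{M}_j(Sz_1,\dots,Sz_j)=(-1)^{\sum_{v=1}^j(j-v)|z_v|}\,S\,b_j^\star(m;z_1,\dots,z_j),\]
where $b_j^\star$ is the totalized brace of \Cref{sectionbibraces} and $|z_v|$ is the total degree in $\Tot(\s\OO)$.

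Next I would invoke the boundedness hypothesis. Since $\OO$ is horizontally bounded on the right, so is $A$, and \Cref{whitehouse} together with \Cref{boundednessremark} yields a bijection between $A_\infty$-structures on $\Tot(A)$ and derived $A_\infty$-structures on $A$. This already produces the maps $M_{ij}$ and guarantees they satisfy the derived $A_\infty$-axioms, so the remaining task is purely to identify their explicit form. For this I would use \Cref{composition}: under the operad isomorphism $\uEnd_{\Tot(A)}\cong\uEnd_A$, the structure map $\mathcal{M}_j\in\End_{\Tot(A)}(j)$ corresponds to the sequence $\{M_{ij}\}_i=\Tot^{-1}(\mathcal{M}_j\circ\mu^{-1})$. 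Evaluating on $x_1\otimes\cdots\otimes x_j$ with $x_v=Sy_v$ and $y_v$ of bidegree $(k_v,d_v-k_v)$, I would expand $b_j^\star(m;-)$ through its explicit description \eqref{bracetot}, observe that applying $\Tot^{-1}$ and projecting onto horizontal-degree $i$ isolates exactly the pieces $m_{il}$, and hence that the horizontal-degree-$i$ component collapses to $\sum_l b_j(m_{il};y_1,\dots,y_j)$ with the bigraded brace of \Cref{bigradedsign}, up to the remaining prefactor.

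The main obstacle is the sign bookkeeping in this last step, where four independent contributions must be reconciled: the brace prefactor $(-1)^{\sum_v(j-v)|z_v|}$ from \Cref{explicit}; the totalization sign of $\mu^{-1}$ from \Cref{mui}; the sign introduced by the de-totalization map $\Tot^{-1}$ of \Cref{enrichedtot}; and the $(-1)^{kn}$-type sign of the identification in \Cref{extrasign}. The delicate point is that every term involving the horizontal degrees $k_v$ must cancel, so that only the vertical internal degrees $d_v-k_v$ survive and the total prefactor collapses to $(-1)^{\sum_{v=1}^j(j-v)(d_v-k_v)}$ as stated. I expect this cancellation to be the technical heart of the argument; it parallels the sign-collapse already carried out in the proof of \Cref{explicit}, now complicated by the extra horizontal grading carried through totalization.

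Finally, I would note that the case $j=1$ is deliberately excluded from the statement: the maps $M_{i1}$ recover the twisted-complex differentials $d_i=(-1)^i m_{i1}$ attached to $A$, in accordance with the normalization in \Cref{whitehouse} that sends $\mu_{i1}$ to $d_i^A$, and these are handled by the same $M_1$-type summand $b_1(m;-)-(-1)^{|x|}b_1(-;m)$ appearing in \Cref{ainftystructure} rather than by the brace formula above.
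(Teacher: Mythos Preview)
Your proposal is correct and follows essentially the same strategy as the paper: build the $A_\infty$-structure on $S\Tot(\s\OO)$ via \Cref{ainftystructure}, then transport it back to $A=S\s\OO$ through the bijections of \Cref{whitehouse}, and finally unwind the signs.

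Two minor discrepancies in bookkeeping are worth flagging. First, your detour through \Cref{extrasign} is unnecessary: the paper applies \Cref{ainftystructure} directly to the graded operad $\Tot(\s\OO)$ with its braces $b_j^\star$, so the $(-1)^{kn}$-type sign you list as a fourth contribution never enters. Second, your list omits one sign that the paper does track: the first bijection in \Cref{whitehouse} (between $\Hom_{\mathrm{vbOp},d^A}(d\calA_\infty,\End_A)$ and $\Hom_{\mathrm{vbOp}}(\calA_\infty,\uEnd_A)$) contributes a factor $(-1)^{ij}$, so that $M_{ij}=(-1)^{ij}\widetilde{M}_{ij}$ where $\widetilde{M}_{ij}$ is what your $\Tot^{-1}(\mathcal{M}_j\circ\mu^{-1})$ produces. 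In the paper's computation this $(-1)^{ij}$ combines with the $(-1)^{id}$ from $\mathfrak{Tot}^{-1}$ and the $(-1)^{i(d-j)}$ left over from $\varepsilon$ after the $\mu^{-1}$ cancellation, leaving only the vertical-shift sign from \Cref{explicit}. Your extrasign sign would have to play the compensating role in your variant, so be careful that the two routes genuinely reconcile when you carry out the cancellation.
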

\begin{proof}
Since $m$ is a derived $A_\infty$-multiplication $\OO$, we have that $m\star m=0$ when we view $m$ as an element of $\Tot(\s\OO)$. By \Cref{ainftystructure}, this defines an $A_\infty$-algebra structure on $S\Tot(\s\OO)$ given by maps
\[M_j:(S\Tot(\s\OO))^{\otimes j}\to S\Tot(\s\OO)\]
induced by shifting brace maps
\[b_j^\star(m;-):(\Tot(\s\OO))^{\otimes j}\to \Tot(\s\OO).\]
 The graded module $S\Tot(\s\OO)$ is endowed with the structure of a filtered complex with differential $M_1$ and filtration induced by the column filtration on $\Tot(\s\OO)$. Note that $b^\star_j(m;-)$ preserves the column filtration since every component $b^\star_j(m_{ij};-)$ has positive horizontal degree. 
 
Since $S\Tot(\s\OO)\cong \Tot(S\s\OO)$, we can view $M_j$ as the image of a morphism of operads of filtered complexes $f:\mathcal{A}_\infty\to \End_{\Tot(S\s\OO)}$ in such a way that $M_j=f(\mu_j)$ for $\mu_j\in\mathcal{A}_\infty(j)$. 

We now work our way backwards using the strategy also employed by the proof of \Cref{whitehouse}. The isomorphism 
\[\Hom_{\mathrm{vbOp}}(\calA_\infty, \uEnd_{\Tot(A)})\cong \Hom_{\mathrm{COp}}(\calA_\infty,\End_{\Tot(A)})\]
does not modify the map $M_j$ at all but allows us to see it as a element of $\uEnd_{\Tot(A)}$ of bidegree $(0,2-j)$. 

The isomorphism 
\[\Hom_{\mathrm{vbOp}}(\calA_\infty, \uEnd_A)\cong \Hom_{\mathrm{vbOp}}(\calA_\infty, \uEnd_{\Tot(A)})\] 
in the direction we are following is the result of applying $\Hom_{\vbOp}(\calA_\infty,-)$ to the map described in \Cref{composition}. Under this isomorphism, $f$ is sent to the map \[\mu_j\mapsto \mathfrak{Tot}^{-1}\circ c(M_j,\mu^{-1})=\mathfrak{Tot}^{-1}\circ M_j\circ \mu^{-1},\] where $c$ is the composition in $\ufC$. The functor $\mathfrak{Tot}^{-1}$ decomposes $M_j$ into a sum of maps $M_j=\sum_i \widetilde{M}_{ij}$, where each $\widetilde{M}_{ij}$ is of bidegree $(i,2-j-i)$. More explicitly, let $A=S\s\OO$ and let $x_1\otimes\cdots\otimes x_j\in (A^{\otimes j})^{d-k}_k$. 

Then we have
\begin{align}\label{totsign}
\mathfrak{Tot}^{-1}(M_j( \mu^{-1}(x_1\otimes\cdots\otimes x_j)))=
\mathfrak{Tot}^{-1}(Sb_j^\star(m;(S^{-1})^{\otimes j}(\mu^{-1}(x_1\otimes\cdots\otimes x_j))))&\nonumber\\
=\sum_i(-1)^{id}\sum_l Sb_j^\star(m_{il};(S^{-1})^{\otimes j}(\mu^{-1}(x_1\otimes\cdots\otimes x_j)))&\nonumber\\
=\sum_i(-1)^{id}\sum_l(-1)^{\varepsilon} Sb_j(m_{il};(S^{-1})^{\otimes j}(\mu^{-1}(x_1\otimes\cdots\otimes x_j)))&\nonumber\\
=\sum_i\sum_l(-1)^{id+\varepsilon} Sb_j(m_{il};(S^{-1})^{\otimes j}(\mu^{-1}(x_1\otimes\cdots\otimes x_j)))
\end{align}
so that \[\widetilde{M}_{ij}(x_1,\dots,x_j)=\sum_l(-1)^{id+\varepsilon} Sb_j(m_{il};(S^{-1})^{\otimes j}(\mu^{-1}(x_1\otimes\cdots\otimes x_j))),\] where $b_j$ is the brace on $\s\OO$ and $\varepsilon$ is given in \Cref{totcomp}. 
According to the isomorphism 
\begin{equation}\label{firstiso}
\Hom_{\mathrm{vbOp},d^A}(d\calA_\infty,\End_A)\cong
\Hom_{\mathrm{vbOp}}(\calA_\infty, \uEnd_A),
\end{equation}
 the maps $M_{ij}=(-1)^{ij}\widetilde{M}_{ij}$ define an $A_\infty$-structure on $S\s\OO$. Therefore we now just have to work out the signs. Notice that $d_v$ is the total degree of $y_v$ as an element of $\s\OO$ and recall that $d$ is the total degree of $x_1\otimes\cdots\otimes x_j\in A^{\otimes j}$. Therefore, $\varepsilon$ can be written as
\[\varepsilon= i(d-j)+\sum_{1\leq v<w\leq j}k_vd_w.\]
The sign produced by $\mu^{-1}$, as we saw in \Cref{mui}, is precisely determined by the exponent 
\[\sum_{w=2}^jd_w\sum_{v=1}^{w-1}k_v=\sum_{1\leq v<w\leq j}k_vd_w,\]so this sign cancels the right hand summand of $\varepsilon$. This cancellation was expected since this sign comes from $\mu^{-1}$, and operadic composition is defined using $\mu$, see \Cref{insertion}. 
Finally, the sign $(-1)^{i(d-j)}$ left from $\varepsilon$ cancels with $(-1)^{id}$ in \Cref{totsign} and $(-1)^{ij}$ from the isomorphism (\ref{firstiso}). This means that we only need to consider signs produced by vertical shifts. This calculation has been done previously in \Cref{explicit} and as we claimed the result is 
\[M_{ij}(x_1,\dots,x_j)= (-1)^{\sum_{v=1}^j(j-v)(d_v-k_v)}\sum_lSb_j(m_{il};y_1,\dots, y_j). \]

\end{proof}

\begin{remark}\label{equivalent}
Note that as in the case of $A_\infty$-algebras in $\mathrm{C}_R$  
we have two equivalent descriptions of $A_\infty$-algebras in $\tc$.

\begin{itemize}
\item A twisted complex $(A, d^A)$ together with a morphism $\calA_\infty \longrightarrow \uEnd_A$ of operads in $\vbc$, which is determined by a family of elements $M_i \in\utC(A^{\otimes i},A)^{2-i}_0$ for $i \geq 2$ for which the $A_\infty$-relations hold for $i\geq 2$. The composition is the one prescribed by the composition morphisms of $\utC$.
\item A bigraded module $A$ together with a family of elements $M_i \in\ubgMod(A^{\otimes i},A)^{2-i}_0$ for $i \geq 1$ for
which all the $A_\infty$-relations hold. The composition is prescribed by the composition
morphisms of $\ubgMod$.
\end{itemize}
Since the composition morphism
in $\ubgMod$ is induced from the one in $\utC$ by forgetting the differential, these two presentations
are equivalent.
\end{remark}

This equivalence allows us to formulate the following alternative version of \Cref{whitehouse}.
\begin{corollary}\label{alternative}
Given a bigraded module $A$ horizontally bounded on the right we have isomorphisms
\begin{align*}
\Hom_{\mathrm{bgOp}}(d\calA_\infty,\End_A) &\cong
\Hom_{\mathrm{bgOp}}(\calA_\infty, \uEnd_A)\\
&\cong \Hom_{\mathrm{bgOp}}(\calA_\infty, \uEnd_{\Tot(A)})\\
&\cong \Hom_{\mathrm{fOp}}(\calA_\infty,\underline{\End}_{\Tot(A)}),
\end{align*}
where $\mathrm{bgOp}$ is the category of operads of bigraded modules and $\mathrm{fOp}$ is the category of operads of filtered modules. 
\end{corollary}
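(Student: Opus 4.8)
The plan is to establish the three displayed isomorphisms one at a time, running the chain of bijections of \Cref{whitehouse} in the ``differential-free'' setting provided by \Cref{equivalent}. The key observation is that the second description of $A_\infty$-algebras in that remark --- a bigraded module together with a family $M_i\in\ubgMod(A^{\otimes i},A)^{2-i}_0$ satisfying all the $A_\infty$-relations, with composition taken in $\ubgMod$ --- lives entirely in $\bgmod$ and never refers to a vertical differential. Consequently every step that in \Cref{whitehouse} used the $\vbc$-enrichment has a verbatim counterpart using the $\bgmod$-enrichment, and the boundedness hypothesis on $A$ is exactly what keeps the relevant comparison maps invertible.

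For the first isomorphism I would expand definitions. By \Cref{derivedmultiplication} a morphism $d\calA_\infty\to\End_A$ in $\mathrm{bgOp}$ is the same as a derived $A_\infty$-multiplication, i.e.\ a family $\{m_{ij}\}$ of maps $A^{\otimes j}\to A$ of bidegree $(i,2-(i+j))$ satisfying \eqref{dainftyequation}. Packaging these arity-wise as $M_j:=(m_{0j},m_{1j},m_{2j},\dots)\in\ubgMod(A^{\otimes j},A)^{2-j}_0$, I would check that the enriched composition $c(f,g)_m=\sum_{i+j=m}(-1)^{i|g|}f_ig_j$ of \Cref{weirdenrichment} reproduces precisely the sign $(-1)^{rq+t+pj}$ and the index constraints $u=i+p$, $v=j+q-1$ of the derived $A_\infty$-equation. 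This identifies the $A_\infty$-relations for $\{M_j\}$ computed in $\ubgMod$ with \eqref{dainftyequation}, which is exactly the content of \Cref{equivalent}(2) read off in $\bgmod$, and hence yields the bijection with $\Hom_{\mathrm{bgOp}}(\calA_\infty,\uEnd_A)$.

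The second isomorphism is immediate from \Cref{inverse}: since $A$ is horizontally bounded on the right, the morphism $\uEnd_A\to\uEnd_{\Tot(A)}$ of operads in $\bgmod$ is an isomorphism, and post-composition with it induces a bijection on $\Hom_{\mathrm{bgOp}}(\calA_\infty,-)$. The remaining, and hardest, isomorphism relates the $\bgmod$-enriched endomorphism operad $\uEnd_{\Tot(A)}$, with arity-$n$ part $\ufMod(\Tot(A)^{\otimes n},\Tot(A))$, to the internal-hom operad $\underline{\End}_{\Tot(A)}$ built from $\underline{\Hom}$ as in \Cref{filterend}. Viewing $\calA_\infty$ as a filtered operad with trivial filtration --- equivalently as a bigraded operad concentrated in horizontal degree zero --- any morphism out of it factors through the horizontal-degree-zero part of the target. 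On that part the defining condition $\ufMod(K,L)^v_0=\{f\mid f(F_qK^m)\subset F_qL^{m+v}\}$ of \Cref{ufMod} coincides with $F_0\underline{\Hom}(K,L)=\Hom_{\fmod}(K,L)$ from \Cref{filterend}, so the two hom-objects agree; I would then verify that the operad composition maps match on this common data, which is the filtered-module analogue of \Cref{filterversion}. The real work lies here, in disentangling the two flavours of enrichment --- bigraded hom-objects versus filtered internal hom --- and confirming that the horizontal-degree-zero (respectively trivially filtered) restrictions single out the same operad structure; once this is settled the displayed chain closes up exactly as in \Cref{whitehouse}.
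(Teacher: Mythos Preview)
Your proposal follows essentially the same route as the paper's own proof: derive the first isomorphism by unpacking the $A_\infty$-relations for $\{M_j\}$ in $\ubgMod$ and matching them against the derived $A_\infty$-equation, obtain the second from the bigraded-module case of \Cref{inverse}, and obtain the third by noting that maps out of $\calA_\infty$ (trivially filtered, equivalently concentrated in horizontal degree zero) only see the horizontal-degree-zero part of the target, where $\ufMod$ and $F_0\underline{\Hom}$ agree.

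One caveat on the first step: your packaging $M_j:=(m_{0j},m_{1j},\dots)$ does not reproduce the sign $(-1)^{rq+t+pj}$ on the nose. The paper, following the computation in \cite[Prop.~4.47]{whitehouse}, records that the correct dictionary carries a twist $m_{ij}=(-1)^i\tilde m_{ij}$ with $\tilde m_{ij}:=(M_j)_i$; the extra $(-1)^i$ arises because the enriched composition $c(f,g)_m=\sum_{i+j=m}(-1)^{i|g|}f_ig_j$ and the enriched tensor $(f\underline{\otimes}g)_m=\sum_{i+j=m}(-1)^{ij}f_i\otimes g_j$ together contribute signs that do not collapse exactly to $(-1)^{rq+t+pj}$. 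You would discover this if you actually carried out the check you propose, so the strategy is sound, but your anticipated outcome ``reproduces precisely the sign'' is slightly optimistic.
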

\begin{proof}
Let us look at the first isomorphism

\[\Hom_{\mathrm{bgOp}}(\calA_\infty, \uEnd_A)\cong \Hom_{\mathrm{bgOp}}(d\calA_\infty,\End_A).\]

Let $f:\calA_\infty\to\uEnd_A$ be a map of operads in $\mathrm{bgOp}$. This is equivalent to maps in $\mathrm{bgOp}$
\[\calA_\infty(j)\to\uEnd_A(j)\]
for each $j\geq 1$, which are determined by elements $M_j\coloneqq f(\mu_j)\in\uEnd_A(j)$ for $v\geq 1$ of bidegree $(0,2-j)$ satisfying the $A_\infty$-equation with respect to the composition in $\ubgMod$. Moreover, $M_j\coloneqq (\tilde{m}_{0j},\tilde{m}_{1j},\dots)$ where $\tilde{m}_{ij}\coloneqq (M_j)_i:A^{\otimes n}\to A$ is a map of bidegree $(i,2-i-j)$. Since the composition in $\ubgMod$ is the same as in $\utC$, the computation of the $A_\infty$-equation becomes analogous to the computation done in \cite[Prop 4.47]{whitehouse}, showing that the maps $m_{ij}=(-1)^i\tilde{m}_{ij}$ for $i\geq 0$ and $j\geq 0$ define a derived $A_\infty$-algebra structure on $A$.

The second isomorphism
\[\Hom_{\mathrm{bgOp}}(\calA_\infty, \uEnd_A)\cong \Hom_{\mathrm{bgOp}}(\calA_\infty, \uEnd_{\Tot(A)})\]
follows from the bigraded module case of \Cref{inverse}. Finally, the isomorphism
\[\Hom_{\mathrm{bgOp}}(\calA_\infty, \uEnd_{\Tot(A)})\cong \Hom_{\mathrm{fOp}}(\calA_\infty,\underline{\End}_{\Tot(A)})\]
is analogous to the last isomorphism of \Cref{whitehouse}, replacing the quasi-free relation by the full $A_\infty$-algebra relations. 
\end{proof}

According to \Cref{alternative}, if we have an $A_\infty$-algebra structure on $A = S\s\OO$, we can consider its arity 1 component $M_1\in\underline{\End}_{\Tot(A)}$ and split it into maps $M_{i1}\in \End_A$. Since these maps must satisfy the derived $A_\infty$-relations, they define a twisted complex structure on $A$. The next corollary describes the maps $M_{i1}$ explicitly.

\begin{corollary}\label{mi1}
Let $\OO$ be a bigraded operad with a derived $A_\infty$-multiplication and let $M_{i1}:S\s\OO\to S\s\OO$ be the arity 1 derived $A_\infty$-algebra maps induced by \Cref{alternative} from $M_1:\Tot(S\s\OO)\to \Tot(S\s\OO)$.
Then \[M_{i1}(x)= \sum_l (Sb_1(m_{il};S^{-1}x)-(-1)^{\langle x,m_{il}\rangle}Sb_1(S^{-1}x;m_{il})),\]
where $x\in (S\s\OO)^{d-k}_k$ and $\langle x,m_{il}\rangle=ik+(1-i)(d-1-k)$.
\end{corollary}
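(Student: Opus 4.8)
The plan is to run the arity-one case of the argument in the proof of \Cref{derivedmaps}, the only genuinely new feature being the commutator that defines the arity-one structure map. By \Cref{mstar} and \Cref{ainftystructure}, the condition $m\star m = 0$ endows $S\Tot(\s\OO)\cong\Tot(S\s\OO)$ with an $A_\infty$-structure whose arity-one component is $M_1 = S\circ M_1'\circ S^{-1}$, where
\[M_1'(z) = b_1^\star(m;z) - (-1)^{|z|}b_1^\star(z;m)\]
on $\Tot(\s\OO)$. First I would transport $M_1$ backwards along the isomorphism chain of \Cref{alternative}, exactly as in \Cref{derivedmaps}. For arity one the map $\mu^{-1}$ is the identity (a single tensor factor), so no $\mu$-sign intervenes, and the whole content lies in applying $\mathfrak{Tot}^{-1}$ to split $M_1$ into its bidegree components $\widetilde M_{i1}$ of bidegree $(i,1-i)$ and then setting $M_{i1} = (-1)^i\widetilde M_{i1}$.

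Next I would decompose each $\star$-brace into the plain $\s\OO$-brace $b_1$. Applying $\mathfrak{Tot}^{-1}$ selects, for the target bidegree $(i,1-i)$, the summand $m_{il}$ (of natural bidegree $(i,1-i)$ in $\s\OO$, hence total degree $1$) from $m=\sum_{il}m_{il}$, contributing the sign $(-1)^{id}$ from \Cref{enrichedtot} (with $u=0$ and input total degree $d$), while passing from $b_1^\star$ to $b_1$ contributes the totalization sign of \Cref{totcomp}. Since $S^{-1}x$ has bidegree $(k,d-1-k)$ and total degree $d-1$, this totalization sign is $(-1)^{(d-1)i}$ for the term $b_1^\star(m_{il};S^{-1}x)$ and $(-1)^{k}$ for the term $b_1^\star(S^{-1}x;m_{il})$. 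As in \Cref{explicit}, the vertical-shift conjugation by $S$ produces no additional sign for a single argument, since $\binom{1}{2}=0$ and the relevant sum $\sum_{v=1}^{1}(1-v)(\cdots)$ vanishes.

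It then remains to collect signs. For the first summand the factors $(-1)^{id}$, $(-1)^{(d-1)i}$ and $(-1)^{i}$ cancel, leaving $+Sb_1(m_{il};S^{-1}x)$, exactly as for $j\geq 2$ in \Cref{derivedmaps}. For the second summand the commutator factor $-(-1)^{|S^{-1}x|}=-(-1)^{d-1}$ combines with $(-1)^{id}$, the totalization sign $(-1)^{k}$, and $(-1)^{i}$. The main (and only real) obstacle is verifying that this aggregate equals $-(-1)^{\langle x,m_{il}\rangle}$, i.e. the congruence
\[(d-1) + id + k + i \equiv ik + (1-i)(d-1-k) \pmod 2,\]
which I would check by expanding the right-hand side modulo $2$ and using $2ik\equiv 0$; here $\langle x,m_{il}\rangle = ik + (1-i)(d-1-k)$ is precisely the pairing of the bidegree $(k,d-1-k)$ of $S^{-1}x$ with the natural bidegree $(i,1-i)$ of $m_{il}$, the notational abuse being that the degree entering the sign is that of $S^{-1}x$. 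This yields the stated formula, and by construction the maps $M_{i1}$ satisfy the arity-one derived $A_\infty$-relations, hence determine the twisted complex structure on $S\s\OO$.
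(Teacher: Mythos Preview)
Your proposal is correct and follows essentially the same approach as the paper: you restrict the argument of \Cref{derivedmaps} to arity one, observe that $\mu^{-1}$ is trivial for a single tensor factor, and then collect the sign contributions from $\mathfrak{Tot}^{-1}$, from the totalization part of $b_1^\star$, from the commutator $-(-1)^{|S^{-1}x|}$, and from the final $(-1)^i$. Your bookkeeping is slightly more explicit than the paper's (you identify each sign source separately and verify the congruence $(d-1)+id+k+i\equiv ik+(1-i)(d-1-k)\pmod 2$ directly), but the route and the outcome are the same.
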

\begin{proof}
The proof of \Cref{alternative} was essentially the same as the proof \Cref{whitehouse}. This means that the proof of this result is an arity 1 restriction of the proof of \Cref{derivedmaps}. Thus, we apply \Cref{totsign} to the case $j=1$. Recall that for $x\in (S\s\OO)^{d-k}_{k}$,
\[M_1(x)=b_1^\star(m;S^{-1}x)-(-1)^{n-1}b_1^\star(S^{-1}x;m).\]
 In this case, there is no $\mu$ involved. Therefore, introducing the final extra sign $(-1)^i$ from the proof of \Cref{derivedmaps}, we get from \Cref{totsign} that
\[\widetilde{M}_{i1}(x)=(-1)^i\sum_l((-1)^{id+i(d-1)} Sb_1(m_{il};S^{-1}x)-(-1)^{d-1+id+k}Sb_1(S^{-1}x;m_{il})),\] where $b_1$ is the brace on $\s\OO$. Simplifying signs we obtain
\[\widetilde{M}_{i1}(x)=\sum_l Sb_1(m_{il};S^{-1}x)-(-1)^{\langle  m_{il},x\rangle}Sb_1(m_{il};S^{-1}x))=M_{i1}(x),\]

where $\langle  m_{il},x\rangle=ik+(1-i)(d-1-k)$, as claimed.
\end{proof}

\section{The Derived Deligne Conjecture}\label{sec:ddeligne}

We can follow a similar process as in \Cref{sec:classicaldeligne} to define higher derived $A_\infty$-maps on the Hochschild complex of a derived $A_\infty$-algebra. More precisely, given an operad $\OO$ with a derived multiplication and $A=S\s\OO$, we will define a derived $A_\infty$-algebra structure on $S\s\End_A$. We will then connect the algebraic structure on $A$ with the structure on $S\s\End_A$ through braces. This connection will allow us to formulate and show a new version of the Deligne conjecture.

Let $\overline{B}_j$ be the bigraded brace map on $\s\End_{S\s\OO}$ and consider the maps
\begin{equation}\label{barbimaps}
\overline{M}'_{ij}:(\s\End_{S\s\OO})^{\otimes j}\to \s\End_{S\s\OO}
\end{equation}
defined as 
\begin{align*}
&\overline{M}'_{ij}(f_1,\dots,f_j)=\overline{B}_j(M_{i\bullet};f_1,\dots, f_j) & j>1,\\
&\overline{M}'_{i1}(f)=\overline{B}_1(M_{i\bullet};f)-(-1)^{ip+(1-i)q}\overline{B}_1(f;M_{i\bullet}),
\end{align*}
for $f$ of natural bidegree $(p,q)$, where $M_{i\bullet}=\sum_j M_{ij}$. We define 
\[\overline{M}_{ij}:(S\s\End_{S\s\OO})^{\otimes j}\to S\s\End_{S\s\OO},\ \overline{M}_{ij} = \overline{\sigma}(M'_{ij})=S\circ M'_{ij}\circ (S^{\otimes n})^{-1}.\]

As in the single-graded case we can define a map $\Phi:S\s\OO\to S\s\End_{S\s\OO}$
as the map making the following diagram commute
\begin{equation}\label{derivedPhi}
\begin{tikzcd}
S\s\OO\arrow[rr, "\Phi"]\arrow[d] & & S\s\End_{S\s\OO}\\
\s\OO\arrow[r, "\Phi'"]& \End_{\s\OO}\arrow[r, "\cong"]& \s\End_{S\s\OO}\arrow[u]
\end{tikzcd}
\end{equation}
where 
\[
\Phi'\colon\s\OO \to \End_{\s\OO},\, x\mapsto \sum_{n\geq 0}b_n(x;-).
\]

In this setting we have the bigraded version of \Cref{theorem}. But before stating the theorem, for the sake of completeness let us state the definition of the Hochschild complex of a bigraded module.
\begin{defin}
We define the \emph{Hochschild cochain complex} of a bigraded module $A$ to be the bigraded module $S\s\End_A$. If $(A,d)$ is a vertical bicomplex, then the Hochschild complex has a vertical differential given by $\partial(f)=[d,f]=d\circ f-(-1)^{q}f\circ d$, where $f$ has natural bidigree $(p,q)$ and $\circ$ is the plethysm corresponding to operadic insertions.
\end{defin}
In particular, $S\s\End_{S\s\OO}$ is the Hochschild cochain complex of $S\s\OO$. If $\OO$ has a derived $A_\infty$-multiplication, then the differential of the Hochschild complex $S\s\End_{S\s\OO}$ is given by $\overline{M}_{01}$ from \Cref{barbimaps}.

The following works in a similar way to \Cref{theorem} but carries the extra index $i$ and using the bigraded sign conventions.
\begin{thm}\label{bigradedtheorem}
The map $\Phi$ defined in diagram (\ref{derivedPhi}) above is a morphism of derived $A_\infty$-algebras, i.e. for all $i\geq 0$ and $j\geq 1$  we have the equation

\[\Phi(M_{ij})=\overline{M}_{ij}(\Phi^{\otimes j}).\]
\qed
\end{thm}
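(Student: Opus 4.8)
The plan is to reproduce, essentially verbatim, the commutative-diagram argument of \Cref{theorem}, now carrying the fixed horizontal index $i$ and replacing the single-graded Koszul signs by the bigraded ones governed by $\langle-,-\rangle$. First I would introduce, for each fixed $i$, an intermediate family $\mathcal{M}_{ij}\colon(\End_{\s\OO})^{\otimes j}\to\End_{\s\OO}$ defined exactly as in the proof of \Cref{theorem} but with the $A_\infty$-multiplication replaced by $M'_{i\bullet}=\sum_j M'_{ij}$, the preimage under $\overline{\sigma}$ of the derived structure maps of \Cref{derivedmaps} (so that $M_{i\bullet}=\overline{\sigma}(M'_{i\bullet})$). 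Explicitly, $\mathcal{M}_{ij}(f_1,\dots,f_j)=B_j(M'_{i\bullet};f_1,\dots,f_j)$ for $j>1$ and $\mathcal{M}_{i1}(f)=B_1(M'_{i\bullet};f)-(-1)^{\langle f,M'_{i\bullet}\rangle}B_1(f;M'_{i\bullet})$, where $B_j$ is the natural bigraded brace on $\End_{\s\OO}$. This splits the bigraded analogue of diagram (\ref{proofdiagram}) into a left and a right square, and it suffices to prove that each commutes for every $i$.

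The right square commutes for purely formal reasons, as before. The bigraded analogue of the operad isomorphism $\overline{\sigma}$ from \Cref{barsigma} (compatibly with \Cref{extrasign}) intertwines the braces, since the $\overline{B}_j$ are built from operadic composition; combining this with $M_{i\bullet}=\overline{\sigma}(M'_{i\bullet})$ gives $\overline{M}'_{ij}(\overline{\sigma}f_1,\dots,\overline{\sigma}f_j)=\overline{\sigma}(\mathcal{M}_{ij}(f_1,\dots,f_j))$, exactly reproducing the $j>1$ and $j=1$ computations of \Cref{theorem}.

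The left square is where the work lies, and I would reduce it once more to the brace relation of \Cref{braces} in its bigraded form, treating $j>1$ and $j=1$ (with its two sub-cases, insertions in $m$ and insertions in $x$) separately. The structural point that lets the single-graded bookkeeping go through is that the horizontal degree is additive under operadic composition and is held fixed throughout: each $M'_{i\bullet}$ is concentrated in horizontal degree $i$ and braces merely add horizontal degrees, so the order-preserving-insertion combinatorics, and in particular every cancellation appearing in the proof of \Cref{theorem}, is identical. Only the vertical (internal) degrees feed the signs, and they enter precisely as the internal degree did in the single-graded case, cf. \Cref{bigradedsign}.

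The main obstacle is therefore the bigraded sign bookkeeping: one must verify that the Koszul signs produced by $\langle-,-\rangle$ in the brace relation and in the evaluation maps agree with the signs carried by the derived structure maps of \Cref{derivedmaps} and \Cref{mi1}, and cancel term by term as in the left-square computation of \Cref{theorem}. Since for a fixed horizontal index the braces involved depend only on the vertical degree, I expect this to reduce to the single-graded check already performed. As a cleaner alternative that avoids re-deriving any brace identities, one can observe by \Cref{mstar} that $m$ is an ordinary $A_\infty$-multiplication on the graded operad $\Tot(\s\OO)$, apply \Cref{theorem} directly to obtain the totalized identity $\Phi_{\Tot}(M_j)=\overline{M}_j(\Phi_{\Tot}^{\otimes j})$, and then decompose it by horizontal degree using the detotalization of \Cref{derivedmaps}; in that route the obstacle shifts to identifying $\Phi$ and $\overline{M}_{ij}$ with the horizontal components of their totalized counterparts, which is again the sign analysis of the proof of \Cref{derivedmaps}.
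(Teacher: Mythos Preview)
Your proposal is correct and takes essentially the same approach as the paper: the paper's proof is simply the \qed remark that the argument of \Cref{theorem} goes through verbatim ``carrying the extra index $i$ and using the bigraded sign conventions,'' and your main outline is precisely a careful elaboration of that sentence, down to the introduction of the intermediate maps $\mathcal{M}_{ij}$ and the left/right square split. Your alternative route via totalization and \Cref{mstar} is not used in the paper, but it is a legitimate shortcut that trades the bigraded brace bookkeeping for the detotalization signs of \Cref{derivedmaps}.
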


Now that we have \Cref{bigradedtheorem} and the explicit formulas for the derived $A_\infty$-structure on $S\s\OO$, we can deduce the derived version of the Deligne conjecture in an analogous way to how we obtained the $A_\infty$-version in \Cref{ainftydeligne}. In order to do that, we need to first introduce the derived $A_\infty$-version of homotopy $G$-algebras. To have a more succinct formulation we use the notation $\vdeg(x)$ for the vertical degree of $x$.

\begin{defin}\label{derivedJalgebras}
A \emph{derived $J$-algebra} $V$ is a derived $A_\infty$-algebra with structure maps $\{M_{ij}\}_{i\geq 0, j\geq 1}$ such that the shift is $S^{-1}V$ a brace algebra. Furthermore, the braces and the derived $A_\infty$-structure satisfy the following compatibility relations. Let $x, x_1,\dots, x_j, y_1,\dots, y_n\in S^{-1}V$. 
For all $n,i\geq 0$ we demand 

\begin{align*}
(-1)^{\sum_{i=1}^n(n-v)\mathrm{vdeg}(y_v)}Sb_n(&S^{-1}M_{i1}(Sx);y_1,\dots, y_n)=\\
\underset{\mathclap{1\leq i_1\leq n-k+1}}{\sum_{\mathclap{l+k-1=n}}}(-1)^{\varepsilon}M_{il}(Sy_1,\dots, Sb_{k}(x;y_{i_1},\dots),\dots, Sy_n)\\
-(-1)^{\langle x,M_{il}\rangle}\underset{\mathclap{1\leq i_1\leq n-k+1}}{\sum_{\mathclap{l+k-1=n}}}&(-1)^{\eta} Sb_k(x;y_1,\dots, S^{-1}M_{il}(Sy_{i_1},\dots,), \dots, y_n)
\end{align*}
where

\begin{align*}
\varepsilon = \sum_{v=1}^{i_1-1}\langle Sy_v,S^{1-k}x\rangle&+\sum_{v=1}^{k}\vdeg(y_{i_1+v-1})(k-v)+(l-i_1)\vdeg(x).
\end{align*}
and
\begin{align*}
\eta=& \sum_{v=1}^{i_1-1}(k-v)\vdeg(y_v)+l\sum_{v=1}^{i_1-1}\vdeg(y_v)
+\sum_{v=i_1}^{i_1+l-1}(k-i_1)\vdeg(y_v)+\sum_{v=i_1}^{n-l}(k-v)\vdeg(y_{v+l}).
\end{align*}

For $j>1$ we demand
\begin{align*}
&(-1)^{\sum_{i=1}^n(n-v)\mathrm{vdeg}(y_v)}Sb_n(S^{-1}M_{ij}(Sx_1,\dots, Sx_j);y_1,\dots, y_n)=\\
&\sum(-1)^{\varepsilon}M_{il}(Sy_1,\dots, Sb_{k_1}(x_1;y_{i_1},\dots),\dots, Sb_{k_j}(x_j;y_{i_j},\dots),\dots, Sy_n).
\end{align*}
The unindexed sum runs over all possible choices of non-negative integers that satisfy $l+k_1+\cdots+k_j-j=n$ and over all possible ordering preserving insertions. The right hand side sign is given by 
\begin{align*}
\varepsilon =&\underset{1\leq v\leq k_t}{\sum_{\mathclap{1\leq t\leq j}}} \vdeg(y_{i_t+v-1})(k_v-v)+ \sum_{\mathclap{1\leq i< l\leq j}}k_v\vdeg(x_l)+\underset{\mathclap{i_{t}\leq v< i_{t+1}}}{\sum_{\mathclap{0\leq t< l\leq j}}}\langle Sy_v,S^{1-k_l}x_l\rangle\\
&+\sum_{\mathclap{0\leq v<l\leq j}}(i_{v+1}-i_v-k_v)\vdeg(S^{1-k_l}x_l)+\sum_{\mathclap{1\leq v\leq l\leq j}} \vdeg(x_v)(i_{l+1}-i_l-k_l)
\end{align*}
All the above shifts are vertical and we are setting $i_0=0$, $i_{j+1}=n+1$. 
\end{defin}

For our final result we can now apply 
\Cref{bigradedtheorem} analogously to \Cref{ainftydeligne} using the explicit expressions and signs given by \Cref{derivedmaps}, \Cref{mi1} and \Cref{bigradedsign}. This gives us the Derived Deligne Conjecture, which explicitly describes the structure carried by the Hochschild complex of a derived $A_\infty$-algebra.

\begin{corollary}[The derived Deligne conjecture]\label{dainftydeligne}
If $A$ is a derived $A_\infty$-algebra horizontally bounded on the right, then its Hochschild complex $S\s\End_A$ is a derived $J$-algebra. \qed
\end{corollary}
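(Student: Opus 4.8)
The plan is to mirror the proof of the classical $A_\infty$-Deligne conjecture (\Cref{ainftydeligne}) step by step, replacing each ingredient by its derived counterpart. Set $\OO := \End_A$, so that the derived $A_\infty$-algebra structure on $A$ is precisely a derived $A_\infty$-multiplication $m = \sum_{i,j} m_{ij} \in \End_A$ in the sense of \Cref{derivedmultiplication}; this is the bigraded analogue of \Cref{multiplicationalgebra}. The Hochschild complex of the statement is then $S\s\End_A = S\s\OO$. Since $\s\End_A$ is an operad it carries the canonical brace structure of \Cref{sectionbibraces}, and because $S^{-1}(S\s\End_A) = \s\End_A$, the requirement in \Cref{derivedJalgebras} that the desuspension be a brace algebra is automatic.

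First I would produce the derived $A_\infty$-algebra structure on $S\s\End_A$. The boundedness hypothesis on $A$ is exactly what is required to apply \Cref{derivedmaps} with $\OO = \End_A$: it guarantees that the multiplication is supported in finitely many horizontal degrees in each arity, so that all the brace sums involved are finite. This yields explicit structure maps $M_{ij}$ for $j > 1$, together with the arity-one maps $M_{i1}$ of \Cref{mi1}, making $S\s\End_A$ a derived $A_\infty$-algebra. At this point both data demanded by \Cref{derivedJalgebras} --- a derived $A_\infty$-structure on $V = S\s\End_A$ and a brace algebra structure on $S^{-1}V = \s\End_A$ --- are in place, so that only the compatibility relations remain to be checked.

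These relations would be extracted from \Cref{bigradedtheorem}. Specialising it to $\OO = \End_A$ gives a morphism of derived $A_\infty$-algebras $\Phi : S\s\End_A \to S\s\End_{S\s\End_A}$ satisfying $\Phi(M_{ij}) = \overline{M}_{ij}(\Phi^{\otimes j})$ for all $i \geq 0$ and $j \geq 1$. I would read this equation as an identity of maps $(S\s\End_A)^{\otimes j} \to S\s\End_{S\s\End_A}$ and then evaluate the resulting operations on further arguments $y_1, \dots, y_n \in \s\End_A$. On the left, $\Phi$ is the map $x \mapsto \sum_n b_n(x; -)$ of \eqref{derivedPhi}, so $\Phi(M_{ij})$ contributes the brace $S b_n(S^{-1}M_{ij}(Sx_1, \dots, Sx_j); y_1, \dots, y_n)$; on the right, $\overline{M}_{ij}$ is obtained by applying the brace $\overline{B}_j$ to $M_{i\bullet}$ followed by the shift $\overline{\sigma}$, which produces the sum of terms $M_{il}(Sy_1, \dots, S b_{k_t}(x_t; \dots), \dots, Sy_n)$. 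Matching the two sides term by term, and treating the arity-one identity separately via the explicit formula of \Cref{mi1}, should reproduce exactly the two families of relations in \Cref{derivedJalgebras}.

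The hard part will be the sign bookkeeping. Showing that the constants $\varepsilon$ and $\eta$ in \Cref{derivedJalgebras} are precisely those produced by unwinding \Cref{bigradedtheorem} requires composing several independent sources of signs: the bigraded Koszul rule of \Cref{background}, the vertical operadic suspension sign \eqref{sign}, the totalization sign of \Cref{totcomp}, the $\mu^{-1}$-sign of \Cref{mui}, and the vertical-shift signs appearing in \Cref{derivedmaps} and \Cref{mi1}. As in the classical computation underlying \Cref{ainftydeligne}, the combinatorics of the insertions is routine, but reconciling all of these signs against the stated $\varepsilon$ and $\eta$ is delicate; the bulk of the work lies in verifying that the numerous cross-terms cancel or combine correctly under the bigraded conventions.
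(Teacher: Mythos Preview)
Your proposal is correct and follows essentially the same approach as the paper: apply \Cref{bigradedtheorem} with $\OO=\End_A$, feed in the explicit structure maps from \Cref{derivedmaps} and \Cref{mi1}, and unwind the resulting identity $\Phi\circ M_{ij}=\overline{M}_{ij}\circ\Phi^{\otimes j}$ on test elements to recover the compatibility relations of \Cref{derivedJalgebras}. One small simplification: the totalization sign of \Cref{totcomp} and the $\mu^{-1}$-sign of \Cref{mui} have already been absorbed into the closed formulas of \Cref{derivedmaps} and \Cref{mi1}, so for the compatibility check you only need the brace sign of \Cref{bigradedsign}, the vertical-shift signs, and the bigraded Koszul rule --- exactly the sources the paper cites.
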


\appendix

\section{Twisted complex on an operad}\label{twistedoperad}
In this section we provide a description of the twisted complex structure on an operad $\OO$ with a derived $A_\infty$-multiplication. More precisely, we show by hand that the maps found in \Cref{mi1} define a twisted complex structure on $S\s\OO$.

\begin{lem}\label{twistedmaps}
Let $\OO$ be an operad with a derived $A_\infty$-multiplication $m\in\s\OO$. Then $S\s\OO$ becomes a twisted complex with structure maps
\[M_{i1}(x)= \sum_l (Sb_1(m_{il};S^{-1}x)-(-1)^{\langle x,m_{il}\rangle}Sb_1(S^{-1}x;m_{il})),\]
where $x\in (S\s\OO)^{n-k}_k$ and $\langle x,m_{il}\rangle=ik+(1-i)(n-1-k)$.
\end{lem}
\begin{proof}

Througout the proof we omit the shift maps. Let us first check the twisted complex equation up to signs, to give a conceptual proof before introducing the signs. Up to sign, the maps  $\{M_{i1}\}_{i\geq 0}$ must satisfy the equation
\[\sum_{i+j=u} M_{i1}\circ M_{j1}=0,\]
for all $u$, where $\circ$ is composition of maps. 
Therefore, up to signs we have to compute
\begin{align*}
\sum_{i+j=u}M_{i1}(M_{j1}(x))=&\sum_{i+j=u}M_{i1}\left(\sum_l b_1(m_{jl};x)+b_1(x;m_{jl})\right)\\
=&\sum_{i+j=u}\sum_{l,k}\left(b_1(m_{ik}; b_1(m_{jl};x))+b_1(m_{ik};b_1(x;m_{jl}))\right.\\
&\left.+b_1(b_1(m_{jl};x);m_{ik})+b_1(b_1(x;m_{jl});m_{ik})\right).
\end{align*}
Applying the brace relation we obtain
\begin{align*}
\sum_{i+j=u}\sum_{l,k}(b_1(m_{ik}; b_1(m_{jl};x))+b_1(m_{ik};b_1(x;m_{jl}))+\\
 b_2(m_{jl};x,m_{ik})+b_1(m_{jl};b_1(x;m_{ik}))+b_2(m_{jl};m_{ik},x)+\\
b_2(x;m_{jl},m_{ik})+b_1(x;b_1(m_{jl};m_{ik}))+b_2(x;m_{ik},m_{jl})).
\end{align*}

In the sum, all terms of the form $b_1(x;b_1(m_{jl};m_{ik}))$ that can be seen in the last line should add up to vanish provided that $m$ is a $dA_\infty$-multiplication, meaning that up to sign $b_1(m;m)=0$. 
 Since $i$ and $j$ are interchangeable, i.e. for each pair $(i,j)$ there is the pair $(j,i)$, the terms $b_2(x;m_{jl},m_{ik})+b_2(x;m_{ik},m_{jl})$ in the last line should cancel as well. For this, we should have the pair $(j,i)$ with the opposite sign. Here it is also relevant that the sum runs through all possible values of $k$ and $l$, so that the pair $(j,i)$ appears with $l$ and $k$ interchanged as well. So far the entire last line vanishes up to sign.

Then $b_1(m_{ik};b_1(x;m_{jl}))$ on the first line should cancel with $b_1(m_{jl};b_1(x;m_{ik}))$ on the second line, but from a different summand: the one where $i$ and $j$ are interchanged. Finally, the remaining terms $b_1(m_{ik}; b_1(m_{jl};x))+b_2(m_{jl};x,m_{ik})+b_2(m_{jl};m_{ik},x)$ add up to $b_1(b_1(m;m);x)$ up to sign. That would cancel everything.

Let us now introduce the signs. We now compute for all $u$
\[\sum_{i+j=u} (-1)^iM_{i1}\circ M_{j1}.\]
For $x\in\s\OO$, by definition, we have
\begin{align*}
\sum_{i+j=u}(-1)^iM_{i1}(M_{j1}(x))=\sum_{i+j=u}(-1)^iM_{i1}\left(\sum_l b_1(m_{jl};x)-(-1)^{\langle x,m_{jl}\rangle}b_1(x;m_{jl})\right)=\\
\sum_{i+j=u}(-1)^i\sum_{l,k}\left(b_1(m_{ik}; b_1(m_{jl};x))-(-1)^{\langle x,m_{jl}\rangle}b_1(m_{ik};b_1(x;m_{jl}))+\right.\\
\left. -(-1)^{\langle b_1(m_{jl};x),m_{ik}\rangle}b_1(b_1(m_{jl};x);m_{ik})+(-1)^{\langle b_1(m_{jl};x),m_{ik}\rangle+\langle x|m_{jl}\rangle}b_1(b_1(x;m_{jl});m_{ik})\right).
\end{align*}
Observe that $\langle b_1(m_{jl};x),m_{ik}\rangle=\langle m_{ij},m_{ik}\rangle+\langle x,m_{ik}\rangle$.

Applying the brace relation we obtain

\begin{align}\label{twistedequation}
\sum_{i+j=u}\sum_{l,k}((-1)^ib_1(m_{ik}; b_1(m_{jl};x))-(-1)^{i+\langle x,m_{jl}\rangle}b_1(m_{ik};b_1(x;m_{jl}))+\nonumber\\
 -(-1)^{i+\langle b_1(m_{jl};x),m_{ik}\rangle}(b_2(m_{jl};x,m_{ik})+(-1)^{\langle x,m_{ik}\rangle}b_2(m_{jl};m_{ik},x))\nonumber\\
 -(-1)^{i+\langle b_1(m_{jl};x),m_{ik}\rangle}b_1(m_{jl};b_1(x;m_{ik}))\nonumber\\
+(-1)^{i+\langle b_1(m_{jl};x),m_{ik}\rangle+\langle x,m_{jl}\rangle}(b_2(x;m_{jl},m_{ik})+(-1)^{\langle m_{ik},m_{jl}\rangle}b_2(x;m_{ik},m_{jl}))\nonumber\\
+(-1)^{i+\langle b_1(m_{jl};x),m_{ik}\rangle+\langle x,m_{jl}\rangle}b_1(x;b_1(m_{jl};m_{ik}))).
\end{align}

Recall from \Cref{sharp} that $m$ being a $dA_\infty$-multiplication means that \[\sum_{i+j=u}\sum_{k,l}(-1)^ib_1(m_{jl};m_{ik})=0.\] 
Let us check now the cancellations with the signs. First, let us check that the terms 

\[(-1)^{i+\langle b_1(m_{jl};x),m_{ik}\rangle+\langle x,m_{jl}\rangle}b_1(x;b_1(m_{jl};m_{ik})))\]

can be added up to vanish. For that, we compute the sign 

\[
\langle b_1(m_{jl};x),m_{ik}\rangle+\langle x,m_{jl}\rangle=\langle m_{jl},m_{ik}\rangle+\langle x,m_{ik}\rangle+\langle x,m_{jl}\rangle.\]

Recall that the braces are defined on the operadic suspension, so that the bidegree of $m_{ik}$ is $(i,1-i)$. Therefore, writing the bidegree of $x$ as $(k,n-k)$, so that the total degree is $|x|=n$, the above equals 

\begin{align*}
&ji+(1-i)(1-j)+ki+(n-k)(1-i)+kj+(n-k)(1-j)\\
&\equiv 1+i+j + (i+j)k+(i+j)(n-k)\mod 2\\
&=1+(i+j)(1+n)=1+u(1+|x|).
\end{align*}

Since this sign is constant for all terms $b_1(m_{ik};m_{ij})$ that share the same horizontal degree $i+j=u$, we can rewrite

\[(-1)^{i+\langle b_1(m_{jl};x),m_{ik}\rangle+\langle x,m_{jl}\rangle}b_1(x;b_1(m_{jl};m_{ik})))=-(-1)^{u(1+|x|)}b_1(x;(-1)^ib_1(m_{ik};m_{jl})).\]
Hence, 

\[\sum_{i+j=u}\sum_{k,l}-(-1)^{u(1+|x|)}b_1(x;(-1)^ib_1(m_{ik};m_{jl}))=0.\]
Therefore, after applying the brace relation expression (\ref{twistedequation}) reduces to

\begin{align}\label{twistedequation2}
\sum_{i+j=u}\sum_{l,k}((-1)^ib_1(m_{ik}; b_1(m_{jl};x))-(-1)^{i+\langle x,m_{jl}\rangle}b_1(m_{ik};b_1(x;m_{jl}))+\nonumber\\
 -(-1)^{i+\langle b_1(m_{jl};x),m_{ik}\rangle}(b_2(m_{jl};x,m_{ik})+(-1)^{\langle x,m_{ik}\rangle}b_2(m_{jl};m_{ik},x))\nonumber\\
 -(-1)^{i+\langle b_1(m_{jl};x),m_{ik}\rangle}b_1(m_{jl};b_1(x;m_{ik}))\nonumber\\
+(-1)^{i+\langle b_1(m_{jl};x),m_{ik}\rangle+\langle x,m_{jl}\rangle}(b_2(x;m_{jl},m_{ik})+(-1)^{\langle m_{ik},m_{jl}\rangle}b_2(x;m_{ik},m_{jl})).
\end{align}

Let us focus on the last line. For each pair $(i,j)$ we should have cancellation with the pair $(j,i)$, which adds the same elements, but with different signs. We also need to consider the pairs $(k,l)$ and $(l,k)$ to get a cancellation. Let us compare the signs. For the pair $((i,j),(k,l))$ we have precisely the last line of the above equation

\[(-1)^{i+\langle b_1(m_{jl};x),m_{ik}\rangle+\langle x,m_{jl}\rangle}(b_2(x;m_{jl},m_{ik})+(-1)^{\langle m_{ik},m_{jl}\rangle}b_2(x;m_{ik},m_{jl}))\]

For the pair $((j,i),(l,k))$ we have
\[(-1)^{j+\langle b_1(m_{ik};x),m_{jl}\rangle+\langle x,m_{ik}\rangle}(b_2(x;m_{ik},m_{jl})+(-1)^{\langle m_{jl},m_{ik}\rangle}b_2(x;m_{jl},m_{ik})).\]
 Comparing the sign of $b_2(x;m_{jl},m_{ik})$ we find that for $((i,j),(k,l))$ we have

\[-(-1)^{i+(i+j)(1+|x|)}b_2(x;m_{jl},m_{ik})=-(-1)^{j+u|x|}b_2(x;m_{jl},m_{ik})\]

and for the pair $((j,i),(l,k))$ we have

\[(-1)^{j+u|x|}b_2(x;m_{jl},m_{ik}).\]

As we see, we get opposite signs and thus cancellation. For $b_2(x;m_{ik},m_{jl})$ it is completely analogous. Thus, we have reduced expression (\ref{twistedequation2}) to

\begin{align}\label{twistedequation3}
\sum_{i+j=u}\sum_{l,k}((-1)^ib_1(m_{ik}; b_1(m_{jl};x))-(-1)^{i+\langle x,m_{jl}\rangle}b_1(m_{ik};b_1(x;m_{jl}))+\nonumber\\
 -(-1)^{i+\langle b_1(m_{jl};x),m_{ik}\rangle}(b_2(m_{jl};x,m_{ik})+(-1)^{\langle x,m_{ik}\rangle}b_2(m_{jl};m_{ik},x))\nonumber\\
 -(-1)^{i+\langle b_1(m_{jl};x),m_{ik}\rangle}b_1(m_{jl};b_1(x;m_{ik})).
\end{align}

In a similar fashion to the previous calculation, we are going to cancel $b_1(m_{ik};b_1(x;m_{jl}))$ in the first line with $b_1(m_{jl};b_1(x;m_{ik}))$ in the last line by considering switched pairs. For the pair $((i,j),(k,l))$, the term in the first line is 

\[-(-1)^{i+\langle x,m_{jl}\rangle}b_1(m_{ik};b_1(x;m_{jl}))\]

and for the pair $((j,i),(l,k))$ the term in the last line is

\begin{align*}
-(-1)^{j+\langle b_1(m_{ik};x),m_{jl}\rangle}b_1(m_{ik};b_1(x;m_{jl}))&=(-1)^{1+j+\langle m_{ik},m_{jl}\rangle+\langle x,m_{jl}\rangle}b_1(m_{ik};b_1(x;m_{jl}))\\
&=(-1)^{i+\langle x,m_{jl}\rangle}b_1(m_{ik};b_1(x;m_{jl})),
\end{align*}

which has precisely the opposite sign to the other pair, and thus cancels. This reduces expression (\ref{twistedequation3}) to 

\begin{align}\label{twistedequation4}
\sum_{i+j=u}\sum_{l,k}((-1)^ib_1(m_{ik}; b_1(m_{jl};x))&\nonumber\\
 -(-1)^{i+\langle b_1(m_{jl};x),m_{ik}\rangle}(b_2(m_{jl};x,m_{ik})&+(-1)^{i+\langle m_{jl},m_{ik}\rangle}b_2(m_{jl};m_{ik},x)).
\end{align}

We want these terms to add up to something of the form $b_1(b_1(m;m);x)$. Notice that for this we need to switch some pairs. For simplicity, we switch the pair of the first term and rewrite the sum as

\begin{align*}
\sum_{i+j=u}\sum_{l,k}((-1)^jb_1(m_{jl}; b_1(m_{ik};x))&\\
 -(-1)^{i+\langle b_1(m_{jl};x),m_{ik}\rangle}b_2(m_{jl};x,m_{ik})&+(-1)^{i+\langle m_{jl}, m_{ik}\rangle}b_2(m_{jl};m_{ik},x)).
\end{align*}

Simplifying the signs we get

\begin{align*}
\sum_{i+j=u}\sum_{l,k}((-1)^jb_1(m_{jl}; b_1(m_{ik};x))
 +(-1)^{j+\langle x,m_{ik}\rangle}b_2(m_{jl};x,m_{ik})+(-1)^{j}b_2(m_{jl};m_{ik},x)).
\end{align*}

By the brace relation and \Cref{sharp} this equals

\[\sum_{i+j=u}\sum_{l,k}(-1)^jb_1(b_1(m_{jl};m_{ik});x)=0.\]
\end{proof}



\newcommand{\etalchar}[1]{$^{#1}$}

\end{document}